\definecolor{cgreen}{rgb}{0.0,0.42.0.24}
\definecolor{cpurple}{rgb}{0.78,0,0.82}
\numberwithin{equation}{section}
\newcommand{\eqb}{\begin{equation}}
\newcommand{\eqe}{\end{equation}}
\theoremstyle{plain}
\newtheorem{theorem}{Theorem}[section]
\newtheorem{lemma}[theorem]{Lemma}
\newtheorem{proposition}[theorem]{Proposition}
\newtheorem{corollary}[theorem]{Corollary}
\newtheorem{definition}[theorem]{Definition}
\newtheorem{example}{Example}
\newtheorem{problem}{Problem}
\theoremstyle{remark}
\newtheorem{remark}{Remark}
\font\elevenss=cmss11
\font\eightss=cmss8
\font\sixss=cmss8 at 6pt
\def\ss{\fam\ssfam \elevenss}%
\def\N{\mathbb{N}}
\def\Z{\mathbb{Z}}
\def\R{\mathbb{R}}
\def\RP{\mathbb{RP}}
\def\C{\mathbb{C}}
\def\CP{\mathbb{CP}}
\def\logspace{{\mathcal L}}
\def\CC{{\cal C}}
\def\AA{{\cal A}}
\def\ee{\varepsilon}
\def\dd{\delta}
\def\disp{\displaystyle}
\def\one{{\bf 1}}
\def\zero{\bf{0}}
\def\|{{\, | \, }}
\def\xx{{\bf x}}
\def\yy{{\bf y}}
\def\rr{{\bf r}}
\def\zz{{\bf z}}
\def\ww{{\bf w}}
\def\vv{{\bf v}}
\def\mm{{\bf m}}
\def\fC{\mathfrak{C}}
\def\fF{\mathfrak{F}}
\def\fH{\mathfrak{H}}
\def\M{{\cal M}}
\def\D{{\cal D}}
\def\nbd{{\mathcal N}}
\def\grad{{\nabla}}
\def\crit{\mbox{\ss crit}}
\def\zero{{\bf 0}}
\def\sing{{\mathcal V}}
\def\TT{{\bf T}}
\def\rhat{{\hat{\rr}}}
\def\Res{{\rm Res}\,}
\def\amoeba{\mbox{\ss amoeba}}
\def\I{{\mathcal I}}
\def\tsing{\widetilde{\sing}}
\def\T{\mathbb{T}}
\def\V{\mathbb{V}}
\def\newpol{\mathbf{P}}
\def\expdecay{\simeq}
\def\str{{\Sigma}}
\def\J{\mathbf{J}}
\def\tsing{\tilde{\sing}}
\def\rk{\mathrm{rank}}
\def\flow{\Psi}
\def\bzero{{\mathbf 0}}
\def\spai{{\rm SPAI}}
\def\hspai{{\rm H{\textnormal -}SPAI}}
\def\leray{\mathtt{o}}
\def\disk{\bullet}
\def\chain{{\gamma}}
\def\Comp{\mathbb{C}}
\def\Real{\mathbb{R}}
\def\hr{{h_\rr}}
\def\htt{\tilde{h}}
\def\compact{{\mathcal K}}
\def\Sb{{\overline{S}}}
\begin{document}

\begin{titlepage}

\begin{center}
{\LARGE \bf Stationary points at infinity for analytic combinatorics}
\end{center}

{

\noindent{\bf Abstract:}
On complex algebraic varieties, height functions arising in 
combinatorial applications fail to be proper. This complicates
the description and computation via Morse theory of key topological 
invariants.  Here we establish checkable conditions under which
the behavior at infinity may be ignored, and the usual theorems of
classical and stratified Morse theory may be applied.
This allows for simplified arguments in the field of
analytic combinatorics in several variables, and forms the basis
for new methods applying to problems beyond the reach of previous
techniques.
\vfill
\vfill

{\sc \small
Yuliy Baryshnikov,
University of Illinois,
Department of Mathematics,
273 Altgeld Hall 1409 W. Green Street (MC-382), 
Urbana, IL 61801, {\tt ymb@illinois.edu},
partially supported by NSF grant DMS-1622370. 

Stephen Melczer,
Department of Combinatorics \& Optimization,
University of Waterloo, 
200 University Avenue West,
Waterloo, ON N2L 3G1, Canada, {\tt smelczer@uwaterloo.ca},
partially supported by an NSERC postdoctoral fellowship. 

Robin Pemantle,
University of Pennsylvania,
Department of Mathematics,
209 South 33rd Street,
Philadelphia, PA 19104, {\tt pemantle@math.upenn.edu},
partially supported by NSF grant DMS-1612674.  
}

\noindent{\em Subject classification:} 05A16, 32Q55; secondary 14F45, 57Q99.

\noindent{\em Keywords}:  Analytic combinatorics, stratified Morse theory,
computer algebra, critical point, intersection cycle, ACSV.}
\end{titlepage}

\section{Introduction} \label{sec:intro}  

\subsection{Motivation from Analytic Combinatorics} \label{ss:motivation}
 
Analytic combinatorics in several variables (ACSV) studies
coefficients of multivariate generating functions via analytic
methods; see, for example,~\cite{PW-book}. 
The most developed part of the theory is the asymptotic determination
of coefficients of multivariate series $F(\zz) = \sum_\rr a_\rr \zz^\rr$
where the coefficients $a_\rr$ are defined by
a multivariate Cauchy integral
\begin{equation} \label{eq:cauchy}
a_\rr = \frac{1}{(2\pi i)^d}\int_T \zz^{-\rr} F(\zz) \frac{d\zz}{\zz} \, ,
\end{equation}
for an appropriate torus of integration $T\subset\C^d$.  In many applications, 
$F(\zz) = P(\zz)/Q(\zz)$ is rational function with a power series 
expansion whose coefficients are indexed by $\rr$, an integer vector.  
More generally, one often looks for formulas valid as $\rr$ varies
over some cone.  

Let $|\rr| := \sum_{j=1}^d |r_j|$ denote the $\ell_1$ norm and
let $\rhat$ denote the scaled vector $\rr/|\rr|$.  As a slight abuse of
notation, we will sometimes consider $\rhat$ to be an element of 
$\RP^{d-1}$ rather than the $\ell^1$ unit ball, when the implicit
identification of $\pm\rhat$ leads to no ambiguity.

Given any $\rr \in \R^d$ we define the \emph{phase function}, 
or \emph{height function}, depending only on $\rhat$, by
\begin{equation}
h(\zz) = h_\rr (\zz) = h_{\rhat} (\zz) =  - \Re(\rhat \cdot \log \zz) = 
   - \sum_{j=1}^d \hat{r}_j \log|z_j| \, , 
\label{eq:phase}
\end{equation}
where the logarithm is taken coordinate-wise and 
$\Re(z)$ denotes the real part of complex $z$. 
The height function is useful because it captures the 
behaviour of the term $|\zz^{-\rr}| = \exp ( |\rr| \hr (\zz))$
in the Cauchy integral that grows with~$\rr$.  
Note that even though the ratio $\rhat  = \rr/ |\rr|$ is
always rational, a sequence of such direction vectors may converge
to any real $\rhat$ and our results often hold uniformly for
$\rhat$ in cones of $\RP^{d-1}$. We will be clear when results
do not require $\rhat$ to be rational.

Typically, the Cauchy integral~\eqref{eq:cauchy} is evaluated by applying the stationary 
phase (saddle point) method after a series of deformations of the chain 
of integration.  
To elaborate, we let $\sing := \{ \zz : Q(\zz) = 0 \}$ denote the pole
variety of rational $F(\zz)=P(\zz)/Q(\zz)$, with $P$ and $Q$
coprime, let $\C_*$ denote the nonzero complex numbers, and let 
$\M := \C_*^d \setminus \sing$ denote the domain of holomorphy of
the Cauchy integrand in~\eqref{eq:cauchy}.  The Cauchy integral depends 
only on the homology class\footnote{Throughout, we assume integer 
coefficients for all homology groups.} $[T]$ of $T$ in $H_d (\M)$. 
Stratified Morse theory for complements of closed, Whitney stratified 
spaces suggests that any cycle can be deformed downward (in the sense of 
decreasing $\hr$) until it reaches a topological obstacle at some 
(stratified) critical point of $\sing$.  A topological obstacle 
generally implies that a stationary phase contour has been reached.  
This leads to the following outline for the asymptotic evaluation of 
coefficients $a_\rr$.

\begin{enumerate}[(I)]
\item Find a basis of the singular homology group with integer 
  coefficients $H_d (\M)$, consisting of attachment 
chains 
$\{ \sigma_k \}$ localized near the near critical points $p_k$ of $\hr$ on the
Whitney stratified space $\sing$, in the sense that each chain $\sigma_k$ intersected with
the set of points $\{\zz : \hr(\zz) > \hr(p_k)-\epsilon \}$ 
is contained in a ball around $p_k$ of radius shrinking with $\epsilon$.
\item Compute the coefficients of $[T]$ in this basis; that is, write
\begin{equation} \label{eq:basis} 
[T] = \sum_k n_k \sigma_k
\end{equation}
for some integers $\{ n_k \}$.
\item Asymptotically compute the Cauchy integral over each 
chain of integration $\sigma_k$.
\end{enumerate}

This paper concerns pre-conditions for the validity of this program.
In reverse order, we discuss parts of the program carried out elsewhere.

Part III of the program varies in difficulty depending on the
nature of the critical point.  When the point is in a stratum
of positive dimension
where $\sing$ has at worst normal intersections, it is at worst a
multivariate residue together with a saddle point integral, the
integral being somewhat more tricky if the saddle point is not
quadratically nondegenerate.  This follows from~\cite{varchenko76}
or~\cite{vassiliev77} and is stated explicitly in~\cite{PW2} 
and~\cite[Chapter~10]{PW-book}.  The more difficult case is when 
the critical point $p$ is an isolated singularity
of the stratified space $\sing$.
Homogenizing at $p$, one reduces to the problem of computing the
inverse Fourier transform of a homogeneous hyperbolic function,
some instructions for which can be found in~\cite{ABG}.  This is 
carried out in~\cite{BP-cones} for two classes of quadratic singularities, 
and in~\cite{BMP-lacuna} for some singularities of lacuna type. 
For non-isolated singularities, or isolated singularities of higher degree, 
the technology is still somewhat {\em ad hoc}.

Step II is a topological computation.  In a slightly different context, 
Malgrange~\cite{malgrange80} noted the lack of techniques for approaching 
a similar problem.  Effective algorithms exist only
in special cases, such as the bivariate case~\cite{DVvdHP2012}. 
A new computational approach, relying on the results of the 
present article together with techniques from computer algebra,
is discussed in~\cite{BMP-lacuna} and below. 

Step~I may fail entirely. It is not always true that such a 
basis exists (see Examples~\ref{eg:actual} and~\ref{eg:stat} below),
due to the existence of a topological obstruction at infinity\footnote{Here
and throughout, ``infinity'' refers not only to projective points
but to points where at least one coordinate vanishes; these are the
cases in which the height function may not be well defined. Affine
points with coordinates equal to zero may arise as critical points
for Laurent series.}.  
The focus of the present paper is to find checkable conditions 
under which the class $[T]$ is indeed representable in the 
form~\eqref{eq:basis}.  This has been a sticking point
up to now in the development of ACSV methods.

\subsection{Previous work} \label{ss:previous}

Although the methods of ACSV parallel well-established
mathematical techniques, the underlying combinatorics 
often results in constraints which are natural in our
context but not covered by existing theory.
In this section we discuss related previous work
and why the results we need do not follow from it.  

To begin, there are several reasons why stratified Morse theory
does not immediately imply the existence of the type of basis
appearing in Step~I.  If $\hr$ were a Morse function 
(in the stratified sense) then Theorems~A and~B of~\cite{GM} 
would in fact imply that such a basis exists with some number $m_k$
of generators associated with each critical point $p_k$.  These
are given by the rank of a relative homology group of the normal link at $p_k$
of the stratum containing $p_k$ (in the dimension equal to the codimension of the stratum).
If $\hr$ fails to be Morse by behaving degenerately at $p_k$
such a basis still exists, though it might take a messy perturbation 
argument to compute the rank at $p_k$ and give cycle representatives
for a basis.  

A more serious problem for us is that $\hr$ is not a proper function 
on $\sing$.  This means that gradient flows of the Cauchy domain of 
integration may reach infinity or the coordinate planes at some finite
height $c$, and hence that contours may not be deformable to
levels below $c$ because they get sucked out to infinity first.
As shown in the examples in Section~\ref{sec:examples}, this
can indeed happen.  

A somewhat generic cure for this is to compactify.  In the appendix,
we outline how to embed $\M$
in a compactification $X$
to which the phase function $\hr$ and its gradient extend continuously.  
Applying the results of stratified Morse theory to $X$ then decomposes
the topology of $X$ into attachments at critical points of $X$.  Generically,
there will be finitely many critical points of $X$, all lying in $\M$. 
When this occurs, $\M$ is said to have no stationary points at infinity
and the decomposition in Step~I follows.
Two weaknesses of this approach
are the difficulty in computing $X$ (it relies on an unspecified 
resolution of singularities) and the fact that $X$ depends on $\rr$
not continuously but rather through the arithmetic properties of
the rational vector $\rhat$, thus failing to deliver asymptotics
uniform in a region.  

\subsubsection{Related notions of singularities at infinity}

We now review three streams of prior work where, under some hypotheses
of avoidance of critical points at infinity, the topology of a space
is shown to decompose similarly to the desired decomposition in 
Step~I.

One setting where such problems have been investigated concerns the 
Fourier transform of $\exp (f)$ where $f: \R^d \to \R$ is a real 
polynomial.  When $f$ is homogeneous, resolution of singularities puts 
the phase into a monomial form, after which leading term asymptotics
can be read from its Newton diagram~\cite{vassiliev77,varchenko76}.
For general polynomials, critical points of $f$ occur at places
other than the origin and these local integrals must be pieced
together according to some global topological computation\footnote{Pham
attributes this idea to Malgrange, ``The reduction of a {\em global} 
Fourier-like integral to a sum of Laplace-like integrals is the
topic of Malgrange's recent paper, motivated by an idea of
Balian-Parisi-Voros.''~\cite{Pham1983}.}.
A key difference from our case is that this is an
integral over all of $\R^d$ (there is no polar set).  
Morse theory enters the picture via the filtration 
$\{ \Im (f) \leq c \}_{-\infty < c < \infty}$, which dictates
how the contour of integration may be deformed, where
$\Im(f)$ denotes the imaginary part of $f$.

Many similarities between this case and ours are evident.  
The height function $\Im \{ f \}$ plays a similar role to
our height function $\hr$.  
One may apply methods of saddle point integration at the 
critical points of $f$, as done by Fedoryuk~\cite{fedoryuk-book} 
and Malgrange~\cite{malgrange74,malgrange80}.  Fedoryuk computes
in relative homology, which is good enough for the estimation 
of integrals. 
Pham~\cite{Pham1983} uses an absolute homology theory over a family 
of supports, enabling more precise asymptotic results.  
Pham's crucial hypothesis H1 is that there are no
bifurcation points of the second type or ``critical points at infinity''.
The conclusion is the existence of a basis for the homology of
$\C^d$ with downward supports consisting of so-called {\em Lefschetz
thimbles}, along with a dual basis (in the sense of intersections)
allowing one to compute the coefficients of an arbitrary cycle
in this basis.  Unfortunately, we can see no direct connection
that would reduce our computation to the type analyzed by
Pham and others before him.  Even if we could, the issue of spurious
critical points at infinity would still need to be addressed.
As pointed out in~\cite[page~330]{Pham1983}, there is no simple 
way of telling which of these is relevant.

A second stream of work concerns the topology of complex
polynomial hypersurfaces.  Here there are no integrals, 
hence no phase functions {\em per se}, although motivation
from~\cite{malgrange80} and~\cite{Pham1983} is cited in the
introduction of~\cite{Broughton1988}.  In this paper, 
Broughton computes the homotopy type of a generic
level set $f^{-1} (c)$, showing it to be a bouquet of
$n$ spheres, where $n$ is obtained by summing the Milnor numbers
at the critical points of $f$ other than those with critical
value $c$. Examples show that a hypothesis is necessary
to rule out ``critical points at infinity''; see also~\cite{Parusinski1995}.
Both of these works refer to an assumption of only isolated
critical points at infinity or none at all, but do not supply
a specific definition of critical points at infinity.  Such a
definition is supplied in~\cite{siersma-tibar1995}.  They 
compactify $f$ by taking the closure of its graph in projective space 
and taking a Whitney stratification of the resulting relation.
Depending on whether there are no critical points at infinity,
void (hence ignorable) critical points at infinity, isolated 
non-void critical points at infinity, or non-isolated points,
various conclusions can be drawn about the topologies of the
fibers $f^{-1} (c)$.  Again, no direct relation allows us to
derive from this the decomposition in Steps~I and~II, and 
even then, the issue of spurious critical points would remain.

A third stream of work concerning critical values at infinity
comes closest to our aims here.  The focus of this stream of
work is, given a smooth map $F: \M \to \N$ on some sort of space,
to find a set $B \subseteq \N$ that is not too big, such that
outside of $F^{-1} [B]$, the mapping is a locally trivial fibration.
For proper maps one has the Thom isotopy lemma, which states that
$B$ can be taken as the set $K_0$ of critical values, namely
the set $F(x)$ where $F$ is not a submersion at $x$, failing
to map the tangent space surjectively.  To extend the isotopy
lemma to nonproper maps, one needs to add to $K_0$ an apprpriately
defined set $K_\infty$ of critical values at infinity.  When $\M$
is smooth the so-called Palais-Smale Condition, and
Rabier's general notion of {\bf asymptotic critical 
value}~\cite[Section~6]{rabier1997}, yields such an isotopy result,
valid in a quite general infinite dimensional setting.  Further
work has shown that under reasonable hypotheses the set of 
critical values $K_\infty$ is not too big, for example it has 
measure zero~\cite{kurdyka-orro-simon2000}. 	

Our work requires a result along similar lines, but for stratified spaces.
More specifically, in the proof of Theorem~\ref{th:main} below,
we use an isotopy result away from a small computable set of
stratified critical values for the height function
to describe $H_d (\M)$ in terms of stationary
phase contours.  A similar project is undertaken in the contemporaneous
work~\cite{dinh-jelonek2021}.  There, Dinh and Jelonek prove a version
of the (stratified) Thom isotopy lemma away from a computable 
and nowhere dense set $K_0 \cup K_\infty$ of affine and at-infinity 
critical values. In comparison to our work, Dinh and Jelonek
work in a more general setting but thus use more complicated
constructions leading to a less practical algorithm for detecting critical 
points at infinity. We thus do not use their stratified non-proper isotopy 
lemma~\cite[Theorem~3.4,~Section~3]{dinh-jelonek2021};
rather, we use a similar but streamlined approach to prove 
exactly what is needed for the topological decomposition in the
first part of our main results. A robust study of these topics
from the point of view of efficient algorithms is a promising 
direction for future work in the computer algebra community.

\subsection{Present work} \label{ss:present}

We define a set of projective points which we call {\bf stationary
points at infinity (SPAI)}.  These are limits at infinity (or the
coordinate planes) of sequences
of points that are asymptotically converging to criticality for
a given height function $h_{\rhat}$.  The ultimate goal is to find 
such sequences on which $h_{\rhat}$ remains bounded, because these
indicate trajectories in which gradient like Morse deformations
may get pulled out to infinity.  Such sequences, together with
limit points of the height function, are called heighted SPAI
(H-SPAI). Their image under $h_{\rhat}$ coincides with the set of 
asymptotic critical values in, e.g.,~\cite{rabier1997,dinh-jelonek2021}.
Note that we use the term \emph{stationary point}
instead of the term critical point (common in the analytic 
combinatorics literature) as `critical point ' 
is overloaded and potentially 
misleading to readers in some areas of mathematics.

The advantage to working
with SPAI is that these are easily computed for any real $\rhat$
or when $\rhat$ is a symbolic parameter.  While spurious SPAI
do arise (see Examples~\ref{eg:affine} and \ref{eg:tri}), all such examples we 
know of can be ruled out by determining the height function to be 
unbounded (that is, we do not need to compute the limit set of heights,
just to check whether it is empty).  As to whether H-SPAI themselves
can be spurious, as noted in the introduction of~\cite{dinh-jelonek2021}, 
characterization of bifurcation values (where local triviality fails) is
open, and in particular these can be a proper subset of critical
values.  However, in applications to ACSV, in all examples we know of 
when there are H-SPAI, the attachment cycles do not in fact form a basis 
of the relevant homology group, therefore the isotopy arguments must fail.  

Methodologically, it should be noted that we do not try to show 
topological trivality at infinity in the absence of SPAI,
only that the necessary deformations can avoid infinity.  This,
we suspect, is why our Section~\ref{sec:proof} is shorter
than~\cite[Section~3]{dinh-jelonek2021}.

Our main results are the following.  
\begin{enumerate}[(i)]
\item Given a direction $\rhat$ and real $a < b$, we define three sets
$\spai \supseteq \hspai \supseteq \crit_{[a,b]}$.
\item We give an algorithm for computing SPAI.
\item Theorem~\ref{th:main}, which states that 
cycles may be pushed down until a stationary value is reached in such
a way that they remain above the stationary height only in an arbitrarily small 
neighborhood of the stationary point(s).
\end{enumerate}
As a consequence, when SPAI is empty (which is easily computed) or
when H-SPAI is empty (which may be computed more easily than whether
$\crit_{[a,b]}$ is empty), any cycle may be decomposed into attachment
cycles.  We also note that for a generic $\rhat$ the set SPAI is 
indeed empty, and that computability of SPAI for symbolic $\rhat$
means we can compute a polynomial criterion for the set of directions
$\rhat$ in which SPAI is nonempty.   

The main value of our work lies in its application to ACSV which,
in turn, derives its value from combinatorial applications. The next 
subsection reviews several combinatorial paradigms in which 
ACSV yields strong results;
a different class of examples is presented in Section~\ref{sec:examples}
below.  The purpose of those later examples is $(a)$ to show how 
Theorem~\ref{th:main} can considerably strengthen ACSV analysis, and
$(b)$ to illuminate the role of the hypotheses in the main results
and the increased efficacy in eliminating spurious stationary points.  

Beyond this, Theorem~\ref{th:main} pays back a debt in the literature.
Previous books and papers on ACSV~\cite{PW9,PW-book} often use 
Morse-theoretic heuristics to motivate certain constructions, but 
cannot use Morse theory outright to prove general results.  By 
ruling out stationary points at infinity, those results can be recast 
as following from the stratified Morse framework.  

The remaining sections of the paper, after reviewing applications
of ACSV, are organized as follows.
Section~\ref{sec:critical} sets the notation for the study 
of stratified spaces and stationary points, formulates definitions, and 
states the main result.  Section~\ref{sec:computing} shows how 
to determine all SPAI using a computer algebra system. 
Some examples are given in Section~\ref{sec:examples}.
Section~\ref{sec:proof} proves Theorem~\ref{th:main} by
constructing Morse deformations from height $b$ down to height $a$,
remaining in a bounded region provided $\crit_{[a,b]}$ is empty.

\subsection{Applications of ACSV} \label{ss:applications}

The techniques of analytic combinatorics in several variables find
application to a diverse range of topics in mathematics, computer science,
and the natural sciences. We briefly summarize some of these applications
here; anyone wanting more information can consult, for instance, Pemantle
and Wilson~\cite{PW9,PW-book} or Melczer~\cite{Melczer2021}.  
\\\\
\textbf{Quantum Random Walk:} Since their introduction
in the early 1990s~\cite{AharonovDavidovichZagury1993}, 
quantum variants of random walks have been studied as a 
computational tool for quantum algorithms (see the introduction of Ambainis 
et al.~\cite{AmbainisBachNayakVishwanathWatrous2001}
for a listing of quantum algorithms based around quantum random 
walks, for example).  Results obtained by ACSV go well beyond what
has been obtained by other methods such as orthogonal polynomials or 
the univariate Darboux method~\cite{CarteretIsmailRichmond2003}.
In particular, ACSV may be used to analyze one-dimensional quantum 
walks with arbitrary numbers of quantum 
states~\cite{BresslerGreenwoodPemantlePetkovsek2010} 
and families of quantum random walks on the two-dimensional 
integer lattice~\cite{BaryshnikovBradyBresslerPemantle2011}.
Both of these results involve {\em ad hoc} geometric arguments
which may be streamlined based on the results of the present paper.

\begin{example}
As described in Bressler and Pemantle~\cite{BresslerPemantle2007}, the
analysis of quantum random walks on the one-dimensional integer lattice
can be reduced to studying asymptotics of coefficients
\[ F(x,y) = \frac{G(x,y)}{1-cy+cxy-xy^2} = \sum_{i,j \geq 0} f_{i,j}x^iy^j, \]
where $c \in [0,1]$ is a parameter depending on the underlying probabilities
used to transition between different states in the walk and $G(x,y)$ is a 
polynomial which depends on the initial state of the system. In particular,
for given $c$ one wishes to determine the asymptotic behaviour of the sequence
$a_n^{\lambda} = f_{n,\lceil\lambda n\rceil}$ as $n\rightarrow\infty$.
A short argument about the roots of $H(x,y)=1-cy+cxy-xy^2$ implies 
$a_n^{\lambda}\sim C_{\lambda}n^{-1/2}\rho_{\lambda}^n$ where $0<\lambda<1$;
the values of $\lambda$ such that $\rho_{\lambda}=1$ 
form the \emph{feasible region} of study while the values of $\lambda$ 
with $\rho_{\lambda}<1$, where $a_n^{\lambda}$ exponentially decays, 
form the \emph{nonfeasible region}. For any values of 
$c,\lambda \in (0,1)$ the height function $h_{\mathbb{C}}(x,y)$ with 
$\rhat = (1 : \lambda)$ has two stationary points. Previously, 
to determine asymptotic behaviour one needed to check which of these 
stationary points were in the domain of convergence of $F(x,y)$, 
a computationally difficult task that requires arguing about 
inequalities involving the moduli of variables in an algebraic system 
with parameters.  Running our Maple implementation of Algorithm~\ref{alg:1}
shows that $F(x,y)$ has no stationary points at infinity, meaning 
Theorem~\ref{th:SMT} applies and asymptotics of $a_n^{\lambda}$ 
can be written as an integer linear combination of two explicitly 
known asymptotic series.  In particular, when $2\lambda\in[1-c,1+c]$
then both stationary points are on the unit circle and our results 
immediately imply that  $\lambda$ \emph{must} be in the feasible region. 
Similarly, if $2\lambda\notin[1-c,1+c]$ it can be shown that
$\lambda$ is not in the feasible region. Although our results 
ease the derivation of previously known results
in this instance, they also allow for the derivation of results outside the 
scope of previous methods (see, for instance, Example~\ref{eg:GRZ} below).
\end{example}

\textbf{Queuing Theory and Lattice Walks:} Queuing theory--the study of 
systems in which items enter, exit, and move between various 
lines--arises naturally in computer networking, telecommunications,
and industrial engineering, among other areas. 
Often, one can derive multivariate generating functions describing the
state of a system at point in time, then derive desired information about
the underlying model through an asymptotic analysis. Such analyses,
using analytic combinatorics methods to analyze queuing models, 
can be seen in Bertozzi and McKenna~\cite{BertozziMcKenna1993} and Pemantle and 
Wilson~\cite[Section~4.12]{PW9}, for instance. 
These systems can often be modeled by (classical) 
random walks on integer lattices subject to various 
constraints~\cite[Ch.~9 \& 10]{FayolleIasnogorodskiMalyshev1999}, 
an enumerative problem for which the methods of ACSV are extremely 
effective~\cite{Melczer2021}.
\\\\
\textbf{RNA Secondary Structure:} The secondary structure of 
a molecule's RNA, describing base pairings between its elements, 
encodes important information about the molecule, and predicting 
such structure is a well-studied topic in bioinformatics.  One approach 
to secondary structure prediction uses stochastic context-free grammars 
to generate potential pairings; this approach is implemented in the 
popular Pfold program of Knudsen and Hein~\cite{KnudsenHein1999}. 
To analyze Pfold, Poznanovi{\'c} and Heitsch~\cite{PoznanovicHeitsch2014} 
used multivariate generating functions tracking the probability that 
certain biological features arise. Using classical methods in 
analytic combinatorics, those authors found distributions for single 
features (the numbers of base pairs, loops and helices generated by
a grammar).  Their central limit theorems rely on results of 
Flajolet and Sedgewick, quoted as~\cite[Thoerem~4.1]{PoznanovicHeitsch2014},
whose hypotheses can be replaced by more checkable multivariate hypotheses
once one has our Theorem~\ref{th:main} below.  More recently, 
Greenwood (see~\cite{Greenwood2018} and a forthcoming extension) 
used ACSV to analyze the probability that certain \emph{combinations} of 
features appear.  Greenwood's hypotheses in his Theorem~1 and Corollary~2
can be weakened and much more easily checked with the Morse-theoretic
tools in the present paper.
\\\\
\textbf{Sequence alignment:} The problem of optimally aligning 
more than two sequences on a finite alphabet is fundamental to 
the study of DNA and known in several ways to be mathematically 
intractable.  In~\cite[Section~4.9]{PW9} several cases are analyzed
using techniques of ACSV.  At the time of that paper, ACSV could
only handle cases where the dominant singularity was at a stationary
point all of whose coordinates were known, via Pringsheim's Theorem,
to be real.  Morse theory allows us in principle to handle further 
biologically relevant cases.

\section{Definitions and results} \label{sec:critical}

\subsection{Spaces, stratifications and stationary points} 

Throughout the remainder of the paper, $Q$ is a polynomial and
$\sing$ is the algebraic hypersurface $\{ \zz \in \C^d : Q(\zz) = 0 \}$. 
The elements of $\sing$ with non-zero coordinates is denoted
$\sing_* := \sing \cap \C_*^d$.

\subsubsection{Whitney stratifications}

The following definitions of stratification and Whitney stratification 
are taken from~\cite{Hardt75,GM}.
A {\bf stratification} of a space $\sing \subseteq \C^d$ is a 
partition of $\sing$ into finitely many disjoint sets 
$\{ \str_\alpha : \alpha \in \AA \}$ such that each stratum 
$\str_\alpha$ is a real manifold\footnote{In fact our strata 
are always complex manifolds and complex algebraic sets, however
this is not required in the definition of a stratification.}
of some dimension at most $2d$.
We consider here only algebraic stratifications, meaning that each
stratum is an algebraic set, potentially with an algebraic set
of lower dimension removed.  
A {\bf Whitney stratification} furthermore satisfies the following 
conditions.
\begin{enumerate}[1.]
\item If a stratum $\str_\alpha$ intersects the closure 
$\overline{\str_\beta}$ of another, then it lies entirely
inside: $\str_\alpha \subseteq \overline{\str_\beta}$. 
\item Whitney's Condition B on tangent planes and secant lines
of the strata should hold;  
because we make use only once of this condition and never need to check 
it, we refer readers to~\cite[Chapter~1.2]{GM} for the definition.
\end{enumerate}

A stratification of the pair $(\C_*^d , \sing_*)$ is a Whitney stratification 
of $\C_*^d$ in which the only $(2d)$-dimensional stratum is $\M := 
\C_*^d \setminus \sing$.  When $\sing$ is a complex algebraic variety,
such a stratification always exists. 

\subsubsection*{Logarithmic space, natural Riemannian metric, tilde notation}

Let $\T := \R / 2 \pi \Z$ denote the one-dimensional torus.
We introduce the {\em logarithmic space (logspace)} $\logspace\cong 
\T^d\times\R^d$ together with the exponential diffeomorphism
$\exp : \logspace \to \C_*^d$ defined for $\eta \in \T^d$ and $\xi \in \R^d$ by
$$\exp(\eta,\xi) := \exp(\xi+i\eta).$$
We refer to $\xi \in \R^d$ as the {\em real} coordinates on the logspace.
Mostly we do not need logspace until Section~\ref{sec:proof}, though
we refer to coordinates $\xi$ and $\eta$ occasionally.  The reason for
using both $\C_*^d$ and the logspace is that deformations and other
geometric constructions are more transparent in $\logspace$ but 
that polynomial computations via computer algebra involving $Q$ must be 
carried out in $\C_*^d$.  

\subsubsection*{Directions and affine stationary points}

A {\em direction} is an equivalence class of vectors in 
$\R^d$ under positive multiples.  The direction containing
the nonzero vector $\rr$ can be canonically identified with
the unit vector $\rhat$.  Fix a {\em direction} $\rhat$.
The phase function $h_{\rhat}$ from~\eqref{eq:phase} is
given in logspace by the linear function $\rhat \cdot \xi$.
stationary points of $h_{\rhat}$ on any complex-analytic submanifold 
of $\C_*^d$ are therefore the same as stationary points of
a branch of $\zz^\rr$.

Fix a stratification $\{ \str_\alpha \}$ of $\sing$.
For a stratum $\str$, we define an {\em affine $\rr$-stationary point} 
as a stationary point of the restriction of $h_{\rhat}$ to $\str$. 
These points are what is referred to in ACSV literature as 
critical points; we call them {\em affine stationary points},
in contrast to SPAI, which are stationary points at infinity. 

\subsubsection*{Stationary point relation}

Given a stratum $\str$, a point $\xx \in \str$ and a direction $\rhat$,
a necessary and sufficient condition for $h_{\rhat} |_\str$ to have
a stationary point at $\xx$ is a drop in the rank of a certain matrix of
differentials.  To make this more precise, choose a 
neighborhood of $\xx$ in which the closure of $\str$ is locally 
cut out by $c$ independent polynomials, where $c$ is the 
codimension of $\str$,
\begin{equation} \label{eq:stratum}
\overline{\str} = \left\{\zz : f^\str_j (\zz) = 0 : 
   1 \leq j \leq c \right\} \, .
\end{equation}
The stratum $\str$ may be obtained by intersecting $\overline{\str}$ with a set 
$\{ g^\str_i (\zz) \neq 0 \}$ of polynomial non-equalities 
to remove points in substrata.
By refining the stratification if necessary, we can assume that
the polynomials $f^\str_j$ and $g^\str_i$ define the
whole stratum and that the differentials $\{ df_j^\str \}$ 
are linearly independent everywhere on $\str$.
For any point $\zz \in \str$ and vector $\yy \in \C^d$, let
$\J(\zz,\yy) = \J^{\str , \rhat} (\zz , \yy)$ 
denote the $(c+1) \times d$ matrix with entries
\begin{equation}
\J_{i,j} := z_j \frac{\partial f_i}{\partial z_j} \mbox{ for }
1 \leq i \leq c, \mbox{ and } \J_{c+1, j} = y_j \, .
\label{eq:Jmat}
\end{equation}
Note that after dividing the $j$th column of $\J(\zz,\yy)$ by $z_j$
its rows become the gradients $(\nabla f_i)(\zz)$ together with the 
scaled gradient $(\nabla h_{\yy})(\zz)/h_{\yy}(\zz)$.
The rank of $J(\zz,\ww)$ is invariant under the map $\ww \mapsto \lambda \ww$
for any non-zero $\lambda$, thus we define a binary 
relation $(\zz,\yy)\in S$ which we interpret as $(\nabla h_{\yy})(\zz)$ 
lying in the normal space to $\str$ at $\zz$.

\begin{definition}[binary relation for stationary points]
Define $S = S(\str) \subseteq \C_*^d \times \CP^{d-1}$ by $(\zz,\yy) \in S$
if and only if $\rk(J(\zz,\yy)) \leq c$.
\end{definition}

\begin{proposition}
Let $\str$ be a stratum and suppose $\zz \in \str$ and $\rhat$ is real.
then $(\zz , \rhat) \in S(\str)$ if and only if $\zz$ is a critical point
of the height function $h_{\rhat}$ on the stratum $\str$.
\end{proposition}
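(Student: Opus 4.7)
The plan is to translate the rank condition into a statement about tangent spaces in logarithmic coordinates, where the height function becomes $\R$-linear and stationarity is transparent. Writing $\zeta_j = \log z_j$ on a local branch, we have $h_{\rhat}(\zeta) = -\rhat\cdot\Re(\zeta)$ since $\rhat$ is real, and the defining equations of $\str$ become $\tilde f_i(\zeta) := f_i(e^\zeta) = 0$ with $\partial \tilde f_i/\partial \zeta_j = z_j\,\partial f_i/\partial z_j = \J_{ij}(\zz,\yy)$. Thus the first $c$ rows of $\J(\zz,\rhat)$ are exactly the complex-linear differentials $d\tilde f_i$ at $\zeta$, and they are $\C$-linearly independent by the standing assumption that the $df_i^\str$ are independent on $\str$.

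Next I would identify the tangent space. Since $\str$ is a complex submanifold, $T_\zeta\str = \bigcap_{i=1}^c \ker(d\tilde f_i)$ as a complex subspace of $\C^d$; viewed as a real subspace of $\R^{2d}$, it is closed under multiplication by $i$. The point $\zz$ is a critical point of $h_{\rhat}|_\str$ iff the real differential $dh_{\rhat}\colon v\mapsto -\Re(\rhat\cdot v)$ vanishes on this real tangent space.

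The key observation is that on the complex subspace $T_\zeta\str$, vanishing of $v\mapsto \Re(\rhat\cdot v)$ automatically forces vanishing of the complex-linear functional $v\mapsto \rhat\cdot v$: applying the real condition to $iv \in T_\zeta\str$ gives $\Re(i\,\rhat\cdot v) = -\Im(\rhat\cdot v) = 0$, so both parts vanish. Hence stationarity of $h_{\rhat}$ on $\str$ at $\zz$ is equivalent to the complex-linear form $\rhat\cdot(\,\cdot\,)$ annihilating $\bigcap_{i=1}^c \ker(d\tilde f_i)$. By elementary linear algebra over $\C$, this holds iff $\rhat$ lies in the $\C$-row span of $d\tilde f_1,\ldots,d\tilde f_c$, i.e.\ in the row span of the first $c$ rows of $\J(\zz,\rhat)$. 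Since those rows are already independent, appending $\rhat$ as the $(c+1)$st row keeps the rank at $c$ exactly when $\rhat$ is in their span, giving $\rk \J(\zz,\rhat)\le c \iff (\zz,\rhat)\in S(\str)$.

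The only delicate step is the passage from real stationarity to the complex-linear annihilation condition; this uses both hypotheses of the proposition, namely that $\str$ is a complex submanifold (so that its real tangent space is $i$-invariant) and that $\rhat$ is real (so that $\Re(\rhat\cdot v)$ is genuinely the real part of a $\C$-linear functional applied to $v$). Everything else is bookkeeping between the $\zz$-coordinates in which $\J$ is defined and the $\zeta$-coordinates in which the geometry is most transparent; the change of variables $\zz\mapsto\zeta$ is a diffeomorphism on $\C_*^d$, so no subtleties arise there.
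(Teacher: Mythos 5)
Your argument is correct and is essentially the paper's proof, just worked out more explicitly in logarithmic coordinates. The central step — that vanishing of $\Re(\rhat\cdot v)$ on an $i$-invariant subspace forces vanishing of the full $\C$-linear functional, hence stationarity is equivalent to $\rhat$ lying in the complex conormal span of the $d\tilde f_i$, which is exactly the rank-$\le c$ condition on $\J$ — is the same observation the paper makes when it says the vanishing of $dh_{\rhat}|_\str$ on the real tangent space is equivalent to the vanishing of $dh^{\C}_{\rhat}$ on all of $T\str$.
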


\begin{proof}
The tangent space to a stratum $\str$ defined by analytic functions
is a complex linear subspace of $\C^d$.  The function $h_{\rhat}$ is
the real part of a (branched) analytic function
$h^{\C}_{\rhat} := \sum_{j=1}^d r_j \log z_j$.  It follows that the
vanishing of $dh_{\rhat} |_\str$ on the real part of the complex tangent
space $T \str$ is equivalent to its vanishing on all of $T \str$, which
is equivalent to the vanishing of $dh^{\C}_{\rhat}$ on all of $T \str$.
Vanishing of the complexified height is equivalent to $\rhat$ being in
the complex normal space, which is equivalent to the rank being at most $c$
of any basis for the normal space, together with the vector $\rhat$.
\qed
\end{proof}

\subsubsection*{Stationary points at infinity} 

\begin{definition}[SPAI] \label{def:SPAI}
Let $\overline{S}$ denote the closure of $S$ when embedded in 
$\CP^d \times \CP^{d-1}$.  Define a SPAI in direction $\rhat$
to be an element $(\zz , \rhat) \in \overline{S}$ where $\zz \notin \C_*^d$,
meaning $\zz$ either lies in the plane at infinity or has at least 
one vanishing coordinate.  A {\em witness} to the SPAI $(\zz , \rhat)$
is a sequence $(\zz_n , \rhat_n)$ in $\C_*^d \times \CP^{d-1}$ 
converging to $(\zz , \rhat)$.
\end{definition}

\begin{definition}[ternary relation for heighted stationary points]
\label{def:graph}
Fix a direction $\rhat$.
Let $R := R(\str , \rhat)$ denote the set of triples 
$(\zz , \yy , \eta) \in \C_*^d \times \CP^{d-1} \times \C$ 
such that the following three conditions hold.  
\begin{enumerate}[(i)] \itemsep0em
\item $\zz \in \str$ ; 
\item $\rk(\J (\zz,\yy)) \leq c$, where $c$ is the co-dimension of $\Sigma$ ;
\item $h_{\rhat} (\zz) = \eta$.
\end{enumerate}
Projecting $R(\str, \rhat)$ to the first two coordinates yields $S(\str)$.
\end{definition}

\begin{definition}[H-SPAI] \label{def:crit}
Let $\overline{R}$ denote the closure of $R$ in $\CP^d \times \CP^{d-1}
\times \R$.  A triple $(\zz , \yy, \eta) \in \overline{R}$ with
$\zz \notin \C_*^d$ is called an H-SPAI in $\str$ in direction $\rhat$
and is said to have height $\eta$.  A {\em witness} for the H-SPAI
$(\zz , \yy , \eta)$ is a sequences $(\zz_n , \yy_n , \eta_n)$ in
$R(\str , \rhat)$ converging to $(\zz , \yy, \eta)$.
\end{definition}

We say that the real number $\eta$ is a {\em generalized stationary value}
of $h_{\rhat}$ on the stratum $\str$ if either it is an affine stationary
value (that is, a stationary value of $h_{\rhat} |_\str$) or else it
is the third coordinate of some H-SPAI.  We denote the set of stationary
values by $K(\str , \rhat) = K_0 \cup K_\infty$.

\begin{definition}[stationary points in an interval] \label{def:notation}
Fix $\rhat, \str,$ and fix $-\infty \leq a < b \leq \infty$.  
The $\zz$ coordinates of 
all stationary points on $\str$ with heights in $[a,b]$ form the set
$$\crit_{[a,b]} (\str , \rhat) := \{ \zz \in \CP^d : \exists 
   \yy , \eta \mbox{ with } (\zz , \yy , \eta) \in \overline{R}, \yy = \rhat 
   \mbox{ and } \eta \in [a,b] \} \, .$$
Omitting the argument $\str$ or $[a,b]$ denotes 
a union over all strata and taking $[a,b] = (-\infty , \infty)$, respectively.
We write $\crit_{[a,b]}^{\rm aff}(\str , \rhat)$ for the elements of
$\crit_{[a,b]} (\str , \rhat)$ which are affine stratified points and
$\crit^\infty_{[a,b]} (\str , \rhat)$ for the remaining elements.
\end{definition}

\begin{remark}
Because we sometimes need to refine stratifications, we note that 
refining the stratification can introduce more stationary points, 
affine or infinite, but cannot remove any.  
\end{remark}

\subsection{Main topological results}

For any space $S$ with height function $h$ and any real $b$, 
we define $S_{\leq b} = \{ \xx \in S : h(\xx) \leq b \}$.
We first state our main deformation result, an extension to
the nonproper setting of well known stratified Morse theoretic results
for proper height functions.

\begin{theorem}[Morse deformation] 
\label{th:main}
Fix $Q, \sing$, $\M := \C_*^d \setminus \sing$, and a Whitney 
stratification $\{ \str_\alpha \}$ of $(\C_*^d , \sing_*)$.
Fix also a direction $\rhat$ and height function $h_{\rhat}(\zz)=\zz^{\rhat}$.  
\begin{enumerate}[(i)] 
\item Suppose $\crit_{[a,b]} (\str , \rhat)$ is empty.  Then
$\str_{\leq b} \cong \str_{\leq a}$
for any stratum $\str$, and $\M_{\leq b} \cong \M_{\leq a}$.
\item Suppose $\crit_{[a,b]} (\str , \rhat) = \crit_{[a,b]}^{\rm aff}
(\str , \rhat) = \{ \zz_1, \ldots , \zz_k \}$ with $h_{\rhat} (\zz_j) = c
\in (a,b)$ for all $j \leq k$.  Then for any stratum $\str$, any chain 
$\CC$ supported on $\str_{\leq b}$ is homotopic in $\str_{\leq b}$ 
to a chain supported on the union of $\str_{< c}$ together with 
arbitrarily small neighborhoods of the points $\zz_k$ in $\str$.  
Taking $\str = \M$, it follows that every homology class in 
$H_d (\M_{\leq b})$ is represented by a cycle supported on this union.
\end{enumerate}
\end{theorem}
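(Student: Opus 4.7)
The plan is to prove Theorem~\ref{th:main} by constructing a stratified gradient-like vector field whose flow decreases $h_{\rhat}$ at a controlled rate while remaining inside $\C_*^d$. I would work in logspace $\logspace$, where $h_{\rhat}(\xi,\eta) = -\rhat \cdot \xi$ is linear and the flat Euclidean metric is natural. On each stratum $\str$ of the Whitney stratification of $(\C_*^d,\sing_*)$, let $V_\str$ denote the negative of the orthogonal projection of $\grad h_{\rhat}$ onto $T\str$, renormalized so that $V_\str \cdot \grad h_{\rhat} \equiv -1$. Wherever $V_\str$ is defined and bounded, its integral curves trace $h_{\rhat}$ downward at unit speed.

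The main technical obstacle is the following boundedness lemma: under the hypothesis of part~(i), there exists $\delta > 0$ such that $|\grad_\str h_{\rhat}(\zz)| \geq \delta$ throughout $\str \cap h_{\rhat}^{-1}[a,b]$. I would establish this by contradiction. Suppose there is a sequence $\zz_n \in \str$ with $h_{\rhat}(\zz_n) \in [a,b]$ for which $|\grad_\str h_{\rhat}(\zz_n)| \to 0$. The fiber $S_{\zz_n} = \{\yy \in \CP^{d-1} : \rk J(\zz_n,\yy) \leq c\}$ is a non-empty projective linear subspace encoding the complex conormal direction at $\zz_n$ after logarithmic rescaling, and the projective distance from $\rhat$ to $S_{\zz_n}$ is comparable to $|\grad_\str h_{\rhat}(\zz_n)|$. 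Choosing $\yy_n \in S_{\zz_n}$ realising the minimal distance to $\rhat$ produces $\yy_n \to \rhat$ with $(\zz_n, \yy_n, h_{\rhat}(\zz_n)) \in R(\str, \rhat)$. By compactness of $\CP^d \times \CP^{d-1} \times [a,b]$, some subsequence converges to $(\zz, \rhat, \eta) \in \overline{R}$ with $\eta \in [a,b]$: if $\zz \in \C_*^d$ then $\zz \in \crit_{[a,b]}^{\rm aff}(\str,\rhat)$, and otherwise $\zz \in \crit_{[a,b]}^{\infty}(\str,\rhat)$, contradicting the hypothesis in either case.

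Granted the lemma, part~(i) follows along familiar stratified-Morse lines. Use Whitney's condition B, in the manner of~\cite{GM}, to glue the tangential fields $\{V_\str\}$ into a controlled vector field on $\C_*^d$ preserving the stratification, and integrate over the time interval $[0, b-a]$. The uniform bound on $|V_\str|$ forces each trajectory to have finite arc length, ruling out escape to $\CP^d \setminus \C_*^d$; the resulting flow yields the diffeomorphism $\str_{\leq b} \cong \str_{\leq a}$ and in particular $\M_{\leq b} \cong \M_{\leq a}$. For part~(ii), I first apply part~(i) separately on the sub-intervals $[c+\delta, b]$ and $[a, c-\delta]$, both of which miss every critical value of $h_{\rhat}|_\str$ by hypothesis, reducing to a deformation within the slab $h_{\rhat}^{-1}[c-\delta, c+\delta]$. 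Excising small balls $B_\epsilon(\zz_j)$ around the finitely many affine critical points removes all criticality from the slab, so the boundedness lemma applies on what remains and pushes any chain downward to height $c-\delta$ outside $\bigcup_j B_\epsilon(\zz_j)$. The flowed chain is supported in $\str_{<c} \cup \bigcup_j B_\epsilon(\zz_j)$, and letting $\delta, \epsilon \to 0$ yields the required representation.
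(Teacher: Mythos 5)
Your proposal correctly identifies the central technical lemma and follows the same overall strategy as the paper: prove, by contradiction using compactness of $\CP^d \times \CP^{d-1}\times[a,b]$ and the ternary relation $R$, that the norm of the tangential differential of $h_{\rhat}$ is bounded below outside neighborhoods of the affine stationary points; then use this to build a controlled stratified vector field decreasing height at unit speed, whose flow gives the required deformation retract. Your boundedness lemma and its contradiction argument are essentially the paper's Lemma~5.1.

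However, there is a genuine gap in how you pass from the per-stratum bound to a globally bounded controlled vector field. You write that one should ``glue the tangential fields $\{V_\str\}$ into a controlled vector field\dots in the manner of~\cite{GM}'' and assert that ``the uniform bound on $|V_\str|$ forces each trajectory to have finite arc length.'' But the Thom--Mather controlled-vector-field construction does not, on its face, inherit the norm bound: on a higher-dimensional stratum $Y$ near a lower one $X\subset\partial Y$, the lift is built from the local product structure, and the local projections and distance functions can distort the metric. Goresky--MacPherson and Mather had no need to track this distortion because they worked with proper Morse functions, where the relevant level sets are compact and boundedness is automatic. Here the entire point is that $h_{\rhat}$ is not proper, so an unbounded controlled vector field could pull trajectories to infinity in finite time, which is exactly the failure mode the theorem must rule out. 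The paper flags precisely this issue (``the local product structure does not, on the surface, guarantee a bounded continuous extension of $\vv_\str$'') and fills it with the explicit linear-algebra argument in the proof of Lemma~5.2, using Whitney's conditions A and B to show that the induced lift to $Y$ converges to the field on $X$ (hence is bounded near $X$). Without that argument your proof of part~(i) is not complete.

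For part~(ii) your excision strategy is workable in spirit, but ``apply part~(i) on the slab minus the balls'' is not quite right: the downward flow on $\str\setminus\bigcup_j B_\epsilon(\zz_j)$ is not complete, since trajectories may enter the excised balls before reaching height $c-\delta$, and one must take care that the resulting partial flow still defines a continuous homotopy. The paper instead multiplies the vector field by a bump function vanishing near $\nbd_0$ and below height $a-\theta$, producing a global flow, and then uses a quantitative velocity bound to show that the time-$2\epsilon$ map lands in $\str_{<c}\cup\nbd_{3r}$. Your outline could be repaired along similar lines, but as stated it does not account for trajectories that neither exit the slab nor terminate inside a ball within the allotted time.
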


\begin{remark}
This theorem draws no conclusion about whether the topology 
of $\M$ or $\sing$ can be deduced from the topology near the
stationary points, only stating that certain needed deformations 
exist.  In fact, all the topological information necessary to 
estimate the Cauchy integrals is present in the relative 
homology group $(\M , \M_{\leq -K})$ for some sufficiently 
large $K$, hence the topology of $\M$ at sufficiently low
heights is irrelevant.
\end{remark}

The purpose of these homotopy equivalences is to push the
cycle of integration $T$ down to one whose maximum height 
is as low as possible.  For example,
because the cycle $T$ in the Cauchy integral of interest
can be pushed down at least until hitting 
the first stationary point corresponding to direction $\rhat$, 
the magnitude of coefficients in direction $\rhat$ is bounded above 
by the Cauchy integral over a contour at this height.  The
following corollary of Theorem~\ref{th:main} was given only 
as a conjecture in~\cite{PW-book} because it was not known 
under what conditions $T$ could be pushed down to the stationary height.  

\begin{corollary} \label{cor:main}
Fix $\rhat$ and a Laurent polynomial $Q$ and Laurent expansion 
$P(\zz)/Q(\zz) = \sum_{\rr \in K} a_\rr \zz^\rr$.  Suppose 
$K(\str , \rhat)$ is finite for all strata and denote the 
maximum stationary value by $\disp c$. Then
$$\limsup_{\substack{\rr \to \infty \\ \rr / |\rr| \to \rhat}} 
   \frac{1}{|\rr|} \log |a_\rr| \leq c \, .$$
\end{corollary}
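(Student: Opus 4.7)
The strategy is to combine the Cauchy representation~\eqref{eq:cauchy} with the deformation result of Theorem~\ref{th:main}, pushing the torus of integration down to height just above $c$ and then bounding the integrand trivially on the resulting compact contour.

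First I would take a torus $T \subset \M$ whose class in $H_d(\M)$ yields the Laurent expansion $\sum_{\rr \in K} a_\rr \zz^\rr$ via~\eqref{eq:cauchy}, with initial maximum height $b_0 = \max_{\zz \in T} h_\rr(\zz)$. By hypothesis $\bigcup_\str K(\str,\rhat) \cap (c, b_0]$ is a finite set $c_1 > c_2 > \cdots > c_m = c$ of stationary heights. Fix $\epsilon > 0$. Using Theorem~\ref{th:main} iteratively, between consecutive stationary heights I apply part~(i) to isotope the cycle freely downward, and at each height $c_i$ I apply part~(ii) to deform the cycle into $\M_{<c_i}$ together with an $\epsilon$-neighborhood of the finitely many affine stationary points at height $c_i$. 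After $m$ such passes the resulting cycle $T_\epsilon$ is homologous to $T$ in $\M$ and satisfies $\max_{\zz \in T_\epsilon} h_\rr(\zz) \leq c + \epsilon$.

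Next I would use $T_\epsilon$ to bound the Cauchy integral. Because $T_\epsilon$ is compact and disjoint from $\sing$, the quantity
\[
M_\epsilon := \frac{1}{(2\pi)^d} \int_{T_\epsilon} |F(\zz)| \left| \frac{d\zz}{\zz} \right|
\]
is finite, and the triangle inequality applied to~\eqref{eq:cauchy} yields
\[
|a_\rr| \;\leq\; M_\epsilon \cdot \exp\!\left(|\rr| \max_{\zz \in T_\epsilon} h_{\rr/|\rr|}(\zz)\right).
\]
Because $h_{\rhat'}$ depends continuously on $\rhat'$ and $T_\epsilon$ is compact, $\max_{\zz \in T_\epsilon} h_{\rr/|\rr|}(\zz) \to \max_{\zz \in T_\epsilon} h_\rr(\zz) \leq c + \epsilon$ as $\rr/|\rr| \to \rhat$. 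Taking logarithms, dividing by $|\rr|$, and passing to $\limsup$ gives $\limsup \frac{1}{|\rr|} \log|a_\rr| \leq c + \epsilon$; since $\epsilon > 0$ was arbitrary, the bound follows.

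The main obstacle is verifying that the finitely many successive applications of parts~(i) and~(ii) can be strung together into a single homotopy of the cycle $T$ in $\M$: part~(ii) is stated in a sublevel set $\str_{\leq b}$, and one must check that the $\epsilon$-neighborhoods introduced at each stationary height do not interfere with further downward deformations. This is handled by choosing the neighborhoods small enough at each stage that they lie strictly above the next stationary height $c_{i+1}$, so that part~(i) can be reapplied in the open interval $(c_{i+1}, c_i)$. The remaining subtlety, uniformity of the bound over all $\rr$ with $\rr/|\rr|$ sufficiently close to $\rhat$, follows as indicated from continuous dependence of the height function on the direction together with compactness of $T_\epsilon$.
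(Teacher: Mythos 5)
Your proposal is correct and takes exactly the route the paper intends: deform $T$ downward using Theorem~\ref{th:main} to height just above $c$, then bound the Cauchy integral trivially on the resulting compact cycle, handling the uniformity over $\rr/|\rr| \to \rhat$ via compactness of $T_\epsilon$ and continuity of $h_{\rhat'}$ in $\rhat'$. One small point worth noting: since $c$ is the \emph{maximum} stationary value, the set $\bigcup_\str K(\str,\rhat) \cap (c,b_0]$ is empty (not $\{c_1 > \cdots > c_m = c\}$ as you wrote), so there is nothing to iterate over; a single application of Theorem~\ref{th:main}(i) with $[a,b] = [c+\epsilon, b_0]$ already produces $T_\epsilon$ with maximum height at most $c+\epsilon$, and the bookkeeping about interleaving parts~(i) and~(ii) through several stationary levels is unnecessary for this bound.
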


More generally, we would like to examine how a cycle $\CC$ in $\M$ 
can be represented as the sum of cycles, each of which has been 
pushed down until reaching an obstacle at some stationary height.
In the case where $h_{\rhat}$ is proper, this is a classical result
of Morse theory (when $\sing$ is smooth) or more generally of
stratified Morse theory.  We briefly recall the relevant Morse
theoretic notions.

Let $h: X \to \R$ be a proper smooth function on a stratified space $X$.
Suppose that $h$ has finitely many (stratified) stationary points
$\xx_1, \ldots , \xx_k$.  For ease of exposition, assume the stationary
values $c_1 := h(\xx_1) > \cdots > c_k := h(\xx_k)$ are distinct.
Let $(X_j^+ , X_j^-)$ denote the pair 
$(X_{\leq c_j - \ee} \cup B_{2 \ee} (x_j) , X_{\leq c_j - \ee})$ 
where $\ee$ is sufficiently small.  The relative homology group
$H_d (X_j^+ , X_j^-)$ is called the {\em attachment group} at $x_j$.
Stratified Morse theory guarantees that the attachment pairs
generate all the homology of $X$.  For example, the
following proposition is well known 
(see, e.g.,~\cite[Section~B.2]{PW-book}).

\begin{proposition}[attachments generate homology: proper case]
\label{pr:attachments}
~~\\[-3ex]
\begin{enumerate}[(i)]
\itemsep0em
\item
Every integer homology class $\CC \in H_d (X)$ can be written as a finite
integer combination of classes $\alpha \in H_d (X_j^+)$ projecting
to a nonzero class in $H_d (X_j^+ , X_j^-)$.  
\item
Let $G_j$ be the image of $H_d (X_j^+)$ under the projection from
$X_j^+$ to $(X_j^+ , X_j^-)$, that is, those relative homology classes
representable by absolute cycles.  If $d$ is the homological 
dimension of $X$ then $H_d (X) \cong \bigoplus_{j=1}^k G_j$.
\item When $X$ is a smooth $2d$-manifold and $h$ is harmonic, 
each attachment group is a homology $d$-sphere,
each $G_j$ is is the whole attachment group $H_d (X_k^+ , X_k^-)$ and
$H_d (X) \cong \bigoplus_{j=1}^k H_d (X_k^+ , X_k^-)$.
\end{enumerate}
\end{proposition}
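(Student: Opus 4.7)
The plan is to apply the two fundamental theorems of stratified Morse theory of Goresky-MacPherson to the filtration of $X$ by sublevel sets of $h$, then propagate information through iterated long exact sequences of pairs. Since $h$ is proper, each $X_{\leq c}$ is compact and, between consecutive critical values, the stratified gradient-like flow yields a stratified homotopy equivalence $X_j^- \simeq X_{j+1}^+$; Theorem B identifies each $(X_j^+, X_j^-)$ up to stratified homotopy with local tangential-normal Morse data at $\xx_j$. Concatenating these identifications produces a filtration $X_k^- \subset X_k^+ \simeq X_{k-1}^- \subset \cdots \subset X_1^+ \simeq X$ whose successive quotients are precisely the attachment pairs, and whose bottom $X_k^-$ contributes nothing in degree $d$ under the standing setup (e.g., when $c_k$ is the global minimum of $h$, or when the classes of interest all lie above the lowest critical value).

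For (i), I would argue by descending induction on $j$. Given $\CC \in H_d(X)$, the deformation retraction carries $\CC$ into $H_d(X_1^+)$. The long exact sequence of $(X_1^+, X_1^-)$ furnishes the projection $\pi_1 \colon H_d(X_1^+) \to H_d(X_1^+, X_1^-)$, and $\alpha_1 := \pi_1(\CC) \in G_1$ by definition. Lifting $\alpha_1$ to some $\tilde\alpha_1 \in H_d(X_1^+)$, exactness implies $\CC - \tilde\alpha_1$ lies in the image of $H_d(X_1^-) \cong H_d(X_2^+)$, and the argument iterates until $j=k$, producing the desired integer combination.

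For (ii), the key additional input is that $d$ is the top nonvanishing homological degree, so $H_{d+1}(X_j^+, X_j^-) = 0$ for every $j$: the local Morse data at $\xx_j$ has relative homology supported in degrees $\leq d$ by the tangential-normal product structure from Theorem B. The long exact sequence then yields short exact sequences
\[ 0 \to H_d(X_j^-) \to H_d(X_j^+) \to G_j \to 0, \]
and the lifting map from part (i) provides compatible splittings, giving $H_d(X) \cong \bigoplus_j G_j$. For (iii), harmonicity on a smooth $2d$-manifold forces the Hessian at each $\xx_j$ to be trace-free, hence of signature $(d,d)$, so $\xx_j$ is a nondegenerate critical point of Morse index exactly $d$. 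Classical Morse theory identifies $(X_j^+, X_j^-)$ as the attachment of a single $d$-cell (a Lefschetz thimble), so $H_d(X_j^+, X_j^-) \cong \Z$. Because no critical point has index $d-1$, the connecting homomorphism $H_d(X_j^+, X_j^-) \to H_{d-1}(X_j^-)$ vanishes (its image would be detected by incidence numbers with index-$(d-1)$ generators, and there are none), so $G_j$ fills the entire attachment group and the decomposition of (ii) becomes $H_d(X) \cong \bigoplus_j H_d(X_j^+, X_j^-)$.

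The main obstacle will be the direct-sum assertion in (ii): the long-exact-sequence argument naturally produces a filtration with graded pieces $G_j$, and promoting this to a genuine $\bigoplus$-decomposition requires coherent splittings. The top-dimensionality hypothesis is what makes these splittings available — at degree $d$ the lifts used in (i) can be chosen consistently, and no $(d+1)$-dimensional relative classes can obstruct them — but this point is easy to mishandle and deserves careful writing.
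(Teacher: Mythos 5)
Your overall strategy --- filter $X$ by sublevel sets, exploit the homotopy equivalences $X_j^- \simeq X_{j+1}^+$, and propagate through iterated long exact sequences of the attachment pairs --- is the same as the paper's, and your treatments of (i) and (ii) are close in spirit to the proof given there (the paper cites the vanishing of $H_{d+1}(X_k^-)$, you use vanishing of $H_{d+1}(X_j^+, X_j^-)$; these are compatible readings of the ``homological dimension $d$'' hypothesis, and your explicit remarks about splittings being the delicate point are apt).

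There is, however, a genuine error in your argument for part (iii). You claim that harmonicity of $h$ forces the Hessian at each critical point to be \emph{trace-free, hence of signature $(d,d)$}. Trace-free does not imply balanced signature in real dimension $2d \ge 4$: for example, the diagonal form $\mathrm{diag}(2d-1,-1,\dots,-1)$ is trace-free, nondegenerate, and has signature $(1,2d-1)$. Harmonicity alone therefore only rules out local extrema (index $0$ or $2d$); it does not pin the Morse index to $d$. The correct ingredient, which the paper is implicitly relying on, is that $h = h_{\rhat}$ is the real part of a branch of a holomorphic function. One then invokes the holomorphic Morse lemma: at a nondegenerate critical point, the holomorphic function is locally $\sum_{i=1}^d w_i^2$, so $h$ is locally $\mathrm{Re}\bigl(\sum w_i^2\bigr) = \sum (u_i^2 - v_i^2)$, which has signature exactly $(d,d)$, hence index $d$. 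You should replace the trace-free reasoning with this pluriharmonic/holomorphic argument. The remainder of your (iii), identifying $(X_j^+, X_j^-)$ with a $d$-cell attachment and arguing that the connecting homomorphism into $H_{d-1}(X_j^-)$ vanishes because the sublevel sets are built only from index-$d$ cells (plus whatever lives at the bottom), is consistent with the paper's reasoning, though it tacitly requires the bottom level to contribute nothing in degree $d-1$ --- a point worth flagging if you want the proof airtight.
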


\begin{proof}
The first statement follows from the homotopy equivalence between
$X_k^+$ and $X_{k-1}^+$ (because Theorem~\ref{th:main} always holds
for proper height functions) and the standard Morse filtration.
The second statement follows from the long exact sequence for the
pair $(X_k^+ , X_k^-)$ and the vanishing of $H_{d+1} (X_k^-)$.
The third follows because attachments in smooth Morse theory
are $d$-balls modulo their boundaries, where $d$ is the index
of the stationary point.
\qed
\end{proof}

\begin{remark}
When the stationary values are not distinct one may add in the
balls $B_{2\ee} (x_j)$ one at a time, arriving at the same result.  
\end{remark}

Our main topological result is that all of this holds for the 
nonproper height function $h_{\rhat}$ unless obstructed by 
stationary points at infinity.  

\begin{theorem}[attachments generate homology: general case] \label{th:main2}
Fix $Q, \sing$, $\M := \C_*^d \setminus \sing$, and a Whitney
stratification $\{ \str_\alpha \}$ of $(\C_*^d , \sing_*)$.
Fix also a direction $\rhat$ and 
let $\zz_1 , \ldots \zz_k$ be the affine stationary points
with $c_j := h_{\rhat} (\zz_j)$ nonincreasing.  Let $\M_j^\pm$
be the spaces $X_k^\pm$ from above, with $X = \M$.
\begin{enumerate}[(i)] 
\item Suppose $K_\infty = \emptyset$, that is, there are no H-SPAI.  
Then $H_d (\M) \cong \bigoplus_{j=1}^k G_j$ where $G_j$ are the 
relative cycles in $H_d (\M_j^+ , \M_j^-)$ represented by absolute cycles.
\item Suppose $K_\infty$ is nonempty, having maximum element $c$.
Let the $\{ c_j \}$ be the stratified values as above, 
with $s$ chosen so that 
$c_s > c \geq c_{s+1}$.  Then any class $\CC \in H_d (\M)$ may be
written as $\CC = \beta + \sum_{j=1}^s \alpha_j$ where $\alpha_j \in G_j$
for $j \leq s$ and $\beta$ is supported on $\M_{\leq c + \ee}$.
\end{enumerate}
\end{theorem}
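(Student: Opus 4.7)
The plan is to adapt the classical iterative Morse-theoretic decomposition underlying Proposition~\ref{pr:attachments} to the non-proper setting, using Theorem~\ref{th:main} as the replacement for the proper isotopy lemma. In both parts of Theorem~\ref{th:main2} the mechanism is the same: peel off the affine stationary values from the top down, one critical value at a time, splitting off an attachment class in $G_j$ at each step, and use Theorem~\ref{th:main}(i) in the gaps between critical values to convey the residual cycle into position for the next peeling. The hypothesis $K_\infty=\emptyset$ in part (i) lets the iteration proceed unobstructed all the way down, while the bounded presence of H-SPAI in part (ii) forces the iteration to halt just above height $c$.

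For part (ii), given $\CC \in H_d(\M)$, I would first push it down into $\M_{\leq c_1 + \ee'}$: the interval $(c_1+\ee',\infty)$ contains no affine stationary values (all $\leq c_1$) and no H-SPAI values (all $\leq c < c_1$), so Theorem~\ref{th:main}(i) applies. Inductively for $j = 1,\ldots,s$, Theorem~\ref{th:main}(ii) at height $c_j$ represents the current class by a chain $\alpha_j^{\rm ch} + \CC_{\rm ch}^{(j)}$, where $\alpha_j^{\rm ch}$ is supported in arbitrarily small neighborhoods of the affine stationary points of height $c_j$ (chosen to lie inside the balls defining $\M_j^+$) and $\CC_{\rm ch}^{(j)}$ is supported in $\M_{\leq c_j - \ee}$. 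Since the sum is an absolute cycle in $\M_j^+$, the relative class $[\alpha_j^{\rm ch}] \in H_d(\M_j^+,\M_j^-)$ lies in the image of $H_d(\M_j^+)$, hence in $G_j$; choosing a lift $\alpha_j$ and subtracting, the difference is represented by a cycle $\CC^{(j)}$ supported in $\M_{\leq c_j - \ee}$. The interval $[c_{j+1}+\ee',c_j-\ee]$ is free of stationary values of either type, so Theorem~\ref{th:main}(i) conveys $\CC^{(j)}$ into $\M_{\leq c_{j+1}+\ee'}$ for the next step. After $s$ iterations, $\CC^{(s)}$ sits in $\M_{\leq c_s-\ee}$; a final application of Theorem~\ref{th:main}(i) on the critical-value-free interval $[c+\ee,c_s-\ee]$ pushes it into $\M_{\leq c+\ee}$, giving $\beta:=\CC^{(s)}$ and the decomposition $\CC=\sum_{j=1}^s \alpha_j+\beta$.

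Part (i) is the specialization of this iteration to $K_\infty=\emptyset$: Theorem~\ref{th:main}(i) remains applicable arbitrarily far below $c_k$, so the iteration may be continued without obstruction. Choosing a refining sequence $b_0 > c_1 > b_1 > \cdots > c_k > b_k$ and splicing the long exact sequences of the pairs $(\M_{\leq b_{j-1}},\M_{\leq b_j})$---each of whose relative group is identified via excision and Theorem~\ref{th:main} with the attachment group $H_d(\M_j^+,\M_j^-)$, with $G_j$ being precisely the image of the absolute group---yields the direct-sum isomorphism $H_d(\M)\cong\bigoplus_{j=1}^k G_j$ once one uses that attachments at complex Morse critical points live in middle dimension $d$ and split off cleanly.

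The main technical obstacle is the chain-level bookkeeping at each peeling step: verifying that $[\alpha_j^{\rm ch}]$ really lands in the subgroup $G_j$ rather than in the full attachment group $H_d(\M_j^+,\M_j^-)$, and that subtracting a lift $\alpha_j$ yields a bona fide residual cycle (not merely a chain with badly placed boundary) on which the induction can be continued. A secondary subtlety, present already in part (i), is that the non-proper filtration has no canonical ``empty bottom level'': one must argue that in middle dimension $d$ the iterated splitting really reaches $H_d(\M)$ rather than some quotient, which is where the index-$d$ nature of the complex Morse attachments and the splitting of the long exact sequences in middle dimension become essential. Once these two points are settled, Theorem~\ref{th:main} supplies all the geometric input needed and the classical Morse argument runs verbatim.
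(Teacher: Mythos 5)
Your proposal follows essentially the same route as the paper: the paper's (very terse) proof simply invokes Theorem~\ref{th:main} to deform each $\M_j^-$ down to $\M_{j+1}^+$ and then applies Proposition~\ref{pr:attachments}, stopping at $j=s$ for part~(ii), which is exactly the top-down peeling iteration you spell out. Your expanded bookkeeping — identifying $[\alpha_j^{\rm ch}]\in G_j$ because the representing chain completes to an absolute cycle, and using Theorem~\ref{th:main}(i) to convey the residual cycle across stationary-value-free intervals — is a correct unpacking of the paper's one-line argument rather than a different method.
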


\begin{proof}
Part~$(i)$ follows from Theorem~\ref{th:main} and 
Proposition~\ref{pr:attachments} by deforming each
$\M_j^-$ to $\M_{j+1}^+$.  Part~$(ii)$ follows by performing the
deformations only down to $j = s$.
\qed
\end{proof}

Theorem~\ref{th:main2} says that in Steps I--III of the asymptotic coefficient 
evaluation from Section~\ref{ss:motivation} the only integrals
that need to be evaluated are integrals over classes in each $G_j$.
In general, these attachment classes are the best places to integrate.
For example, as mentioned above, if a stationary point $\xx$ is a smooth 
point of $\sing_*$ then $G$ will have a single generator which is a saddle
point contour and an asymptotic expansion can be deduced automatically.

We also remark that the direct sum in part~$(i)$ and the expansion of
$\CC$ in part~$(ii)$ are not natural.  Elements of $G_j$ are equivalence
classes modulo $G_i$ for all $i > j$, and correspondingly $\alpha_j$
in part~$(ii)$ is determined only modulo linear combinations of $\alpha_i$
for $i > j$.  However, the pair $(j_* , \alpha_{j_*})$ is well defined, 
where $j_*$ is the least index $j$ for which $\alpha_j \neq 0$ (in part~$(ii)$,
the least among $1, \ldots , s$). 

In the remainder of Section~\ref{sec:critical} we expand on 
our underlying motivations, describing the application of
Theorems~\ref{th:main} and~\ref{th:main2} to the computation of cycles, 
integrals over these cycles, and coefficient asymptotics of multivariate
rational functions.  These results, collected from
various prior and simultaneous works, can be skipped if one is
only interested in examples, proofs and computations of stationary 
points at infinity.  

\subsection{Intersection classes on smooth varieties} \label{ss:intersection}

We assume throughout this section that $\sing$ is smooth.
It is useful to be able to transfer between $H_d (\M)$ and 
$H_{d-1} (\sing_*)$: topologically this is the Thom isomorphism and,
when computing integrals, corresponds to taking a single residue.  
We outline this construction, which goes back at least 
to Griffiths~\cite{griffiths1969}.  
Because $\grad Q$ does not vanish on $\sing$, the well known 
Collar Lemma~\cite[Theorem~11.1]{milnor-stasheff} 
states\footnote{See~\cite{lang-manifolds} for a full proof.} that an 
open tubular vicinity of $\sing$ is diffeomorphic to the space 
of the normal bundle to $\sing$.  

It follows that for any $k$-chain $\gamma$ in $\sing$ 
we can define a $(k+1)$-chain $\leray \gamma$, obtained
by taking the boundary of the union of small disks in the fibers
of the normal bundle.  The radii of these disk should be small
enough to fit into the domain of the collar map, but can
(continuously) vary with the point on the base.  Different choices
of the radii matching over the boundary of the chain lead to homologous
tubes.  We will be referring to $\leray \chain$ informally as the
{\em tube around $\gamma$}.  Similarly, the symbol $\disk \chain$
denotes the product with the solid disk.  The elementary rules for
boundaries of products imply
\begin{equation} \label{eq:bdry disk}
\begin{array}{rcl}
\partial (\leray \gamma) & = & \leray (\partial \gamma) \, ; \\
\partial (\disk \gamma) & = & \leray \gamma \cup \disk (\partial \gamma) \, .
\end{array}
\end{equation}
Because $\leray$ commutes with $\partial$, cycles map to cycles,
boundaries map to boundaries, and the map $\leray$ on
the singular chain complex of $\sing_*$ induces a map on 
homology $H_* (\C_*^d \setminus \sing)$; we also denote this 
map on homology by $\leray$ to simplify notation.

\begin{proposition}[intersection classes] \label{pr:int}
Suppose $\sing := \{ Q=0 \}$ is smooth, and define 
$\leray : H_{d-1} (\sing_*) \to H_d (\M)$ as above.
\begin{enumerate}[(i)]
\item 
$\circ$ is injective and its image is the kernel of the
map $\iota_*$ induced by the inclusion 
$\M {\stackrel{\iota}{\longrightarrow}} \C_*^d$.
\item 
Given $\alpha \in \ker (\iota_*)$, one may compute the pullback
$\I (\alpha) := \leray^{-1} (\alpha)$ by intersecting $\sing_*$ 
with any $(d+1)$-chain in $\C_*^{d+1}$ whose boundary is $\alpha$,
and for which the intersection with $\sing_*$ is transverse.
\end{enumerate}
Specializing to $\alpha = \TT - \TT'$ where $\TT$ and $\TT'$ are two
$d$-cycles in $\M$ homologous in $\C_d^*$, we call $\I (\TT - \TT')$
the {\bf intersection class} of $\TT$ and $\TT'$.
\end{proposition}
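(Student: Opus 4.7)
The plan is to identify $\leray$ with the boundary map in the long exact sequence of the pair $(\C_*^d,\M)$, after applying excision and the Thom isomorphism for the rank-one complex normal bundle to $\sing_*$ supplied by the Collar Lemma.

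\emph{Part (i).} Consider the segment
\begin{equation*}
H_{d+1}(\C_*^d) \longrightarrow H_{d+1}(\C_*^d,\M) \xrightarrow{\partial} H_d(\M) \xrightarrow{\iota_*} H_d(\C_*^d).
\end{equation*}
Excision on the tubular neighborhood $N$ of $\sing_*$ gives $H_{d+1}(\C_*^d,\M) \cong H_{d+1}(N,N\setminus\sing_*)$, and the Thom isomorphism (shift by the real rank $2$ of the normal bundle) identifies this further with $H_{d-1}(\sing_*)$. Under this composite identification $\partial$ is precisely $\leray$: a class $[\gamma]\in H_{d-1}(\sing_*)$ is represented in $H_{d+1}(N,N\setminus\sing_*)$ by the disk-bundle chain $\disk\gamma$, whose absolute boundary equals $\leray\gamma$ by~\eqref{eq:bdry disk} when $\gamma$ is a cycle. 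Exactness then yields $\mathrm{Im}(\leray)=\ker(\iota_*)$, and injectivity follows because $\C_*^d \simeq (S^1)^d$ forces $H_{d+1}(\C_*^d)=0$, killing the preceding arrow.

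\emph{Part (ii).} Given $\alpha\in\ker(\iota_*)$, pick a $(d+1)$-chain $C$ in $\C_*^d$ with $\partial C=\alpha$. A standard transversality perturbation (available since $\sing_*$ is a smooth manifold) allows us to replace $C$ by a homologous chain meeting $\sing_*$ transversely and still having boundary $\alpha$. Set $\gamma:=C\cap\sing_*$, a $(d-1)$-chain in $\sing_*$ satisfying
\begin{equation*}
\partial\gamma = (\partial C)\cap\sing_* = \alpha\cap\sing_* = 0,
\end{equation*}
since $\alpha$ is supported in $\M$; thus $\gamma$ is a cycle. Removing the interior of a small fibered disk-bundle neighborhood $\disk\gamma$ of $\gamma$ from $C$ produces a chain $C'$ supported in $\M$, and~\eqref{eq:bdry disk} together with $\partial\gamma=0$ gives $\partial C' = \alpha - \leray\gamma$. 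Hence $[\alpha]=[\leray\gamma]$ in $H_d(\M)$, i.e.\ $\I(\alpha)=[\gamma]$, and the class of $\gamma$ is independent of all choices because by part~(i) the tube map is injective.

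The conceptual content is entirely the classical Leray--Thom residue construction; the main obstacle is technical bookkeeping, namely arranging transversality of $C$ with $\sing_*$ in the singular chain category and tracking the orientation convention so that the boundary comes out $\alpha - \leray\gamma$ rather than $\alpha + \leray\gamma$. Smoothness of $\sing$ is essential both for the existence of the Thom isomorphism and for the transversality perturbation.
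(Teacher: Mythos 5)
Your proof is correct and rests on the same backbone as the paper's: the Thom--Gysin exact sequence, injectivity from $H_{d+1}(\C_*^d)=0$, and the tube/disk boundary formulas. The only real divergence is in the details. In part (i) you derive the Thom--Gysin sequence from the long exact sequence of the pair $(\C_*^d,\M)$ via excision to the tubular neighborhood and the rank-$2$ Thom isomorphism, whereas the paper simply cites it (Gordon/Leray); your version is more self-contained and makes explicit why $\partial$ is identified with $\leray$. In part (ii) you invert $\leray$ from the other side: you excise a disk-bundle neighborhood of $\gamma = C\cap\sing_*$ from $C$ to get a chain $C'\subset\M$ with $\partial C' = \alpha - \leray\gamma$, concluding $\leray[\gamma]=[\alpha]$ and then invoking injectivity for uniqueness. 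The paper instead verifies well-definedness of $\alpha\mapsto[I(\D)]$ directly (using $H_{d+1}(\C_*^d)=0$ to show two choices of $\D$ differ by a boundary) and then checks $I(\disk\gamma)=\gamma$, i.e.\ $I\circ\leray=\mathrm{id}$. Your route is a bit more geometric and avoids the separate well-definedness argument at the cost of leaning on injectivity; both are standard and equivalent. Minor note: the statement's ``$(d+1)$-chain in $\C_*^{d+1}$'' is a typo for $\C_*^d$, which you implicitly correct.
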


\begin{proof}
The Thom-Gysin long exact sequence implies exactness in the following
diagram,
\begin{equation}\label{eq:thom}
H_{d+1}(\Comp_*^d) \stackrel{I_*}{\to} H_{d-1}(\sing_*) 
   \stackrel{\leray}{\to} H_d(\M) \to 
   H_d(\Comp_*^d).
\end{equation}
This may be found in~\cite[page~127]{gordon-residues}, taking
$W = \C_*^d$, though in the particular situation at hand it goes 
back to Leray~\cite{leray}; here, the first mapping, $I_*$, denotes 
the map induced by transverse intersection, $I$.  Injectivity of 
$\leray$ follows from the vanishing of $H_{d+1} (\C_*^d)$.  
The rest of part~$(i)$ follows from exactness at $H_d (\M)$.

For part~$(ii)$, we begin by showing
that $I$ induces a well defined map from $\ker (\iota_*)$ to 
$H_{d-1} (\sing_*)$.  Given $\alpha \in \ker (\iota_*)$,
because transversality is generic, there exist $(d+1)$-chains 
intersecting $\sing_*$ transversely whose boundary is $\alpha$.  
If $\D$ is such a chain and $\CC = I(\D)$ then $\CC$ is a cycle:
$$\partial \CC = \partial (\D \cap \sing_*) = (\partial \D) \cap \sing_*
   = \alpha \cap \sing_* = \emptyset \, .$$
Let $\D_1$ and $\D_2$ be two such chains, and denote $\CC_j 
:= \D_j \cap \sing_*$.  Observe that $\D_1 - \D_2$ is null homologous 
because there is no $(d+1)$-homology in $\C_*^d$, whence 
$$[\CC _1 - \CC_2] = [I(\D_1 - \D_2)] = 0 \, ,$$
showing that $[I(\D)]$ for $\partial \D = \alpha$ is 
well defined in $H_{d-1} (\sing_*)$.  

Finally, if $\alpha = \leray (\gamma)$ then taking $\D = \disk (\gamma)$
gives $I(\D) = \gamma$, showing that $I$ does in fact invert $\leray$,
hence computes $\I$.
\qed
\end{proof}

\subsection{Integration}

Integrals of holomorphic forms on a space $X$ are well defined 
on homology classes in $H_* (X)$.  Relative homology is useful 
for us because it defines integrals up to terms of small order.
Throughout the remainder of the paper, $F = P/Q$ denotes a
quotient of polynomials except when a more general numerator is
explicitly noted.  Let $\amoeba (Q)$ denote the amoeba 
$\{ \log |\zz| : \zz \in \sing_* \}$ associated to the 
polynomial $Q$, where $\log$ and $| \cdot |$ are taken 
coordinatewise\footnote{One should think of the amoeba as sitting
in $\xi$-space, the real part of logspace.}.
Components $B$ of the complement of 
$\amoeba (Q)$ are open convex sets and are in correspondence
with convergent Laurent expansions $\sum_{\rr \in E}
a_\rr \zz^\rr$ of $F(\zz)$, each expansion being convergent when 
$\log |\zz| \in B$ and determined by the Cauchy integral~\eqref{eq:cauchy}
over the torus $\log |\zz| = \xx$ for any $\xx \in B$.

\begin{definition}[$c_*$ and the pair $(\M , -\infty)$] \label{def:low}
Fix $\rhat_*$ and let $c_*  = c_* (\rhat_*)$ denote 
the infimum of heights of stationary points, both affine and at infinity.
Denote by $H_d (\M , -\infty)$ the homology of the pair 
$(\M , \M_{\leq c})$ for any $c < c_*$.  By part~$(i)$ of 
Theorem~\ref{th:main}, these pairs are all naturally 
homotopy equivalent.
\end{definition}

For functions of $\rr \in E \subseteq (\Z^+)^d$, let $\expdecay$ 
denote the relation of differing by a quantity decaying more rapidly 
than any exponential function of $|\rr|$.  If $E$ consists of
vectors $\rr$ whose angle with a fixed $\rr_*$ is bounded above by 
$\pi/2 - \ee$, we note for use below that $h_{\rhat} \leq 
\ee h_{\rhat_*}$, in other words, $h_{\rhat}$ and $h_{\rhat_*}$
go to $-\infty$ at comparable rates on $E$.
Homology relative to $-\infty$ and equivalence up to superexponentially
decaying functions are related by the following result.

\begin{theorem} \label{th:low}
Let $F = G/Q$ with $Q$ rational and $G$ holomorphic.  Fix $\rhat_*$
and suppose that $c_* (\rhat_*) > -\infty$.  For $d$-cycles $C$ in $\M$, 
as $\rr$ varies over a set $E$ whose angle with $\rhat_*$ is bounded
above by $\pi - \ee$, the $\expdecay$ equivalence class of the integral 
$\disp \int_{\CC} \zz^{-\rr} F(\zz) \, d\zz$
depends only on the relative homology class of $\CC$ when projected
to $H_d (\M , -\infty)$.  
\end{theorem}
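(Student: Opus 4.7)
The plan is to exploit that $\omega_\rr := \zz^{-\rr} F(\zz)\, d\zz$ is a closed holomorphic $d$-form on $\M$ (it is holomorphic, since $G$ is holomorphic on $\M$ and $\zz^{-\rr}$ is holomorphic on $\C_*^d$; a holomorphic $d$-form on a complex $d$-manifold is automatically $d$-closed), and then to use Theorem~\ref{th:main}(i) to push the ``difference chain'' between two representatives arbitrarily low in $h_{\rhat_*}$-height, where the factor $|\zz^{-\rr}|$ becomes superexponentially small uniformly for $\rr \in E$. Concretely, fix $c_0 < c_*(\rhat_*)$; if $\CC$ and $\CC'$ represent the same class in $H_d(\M, \M_{\leq c_0})$, then by definition of relative homology $\CC - \CC' = \partial D + Z$ for some $(d+1)$-chain $D \subset \M$ and some $d$-cycle $Z$ supported in $\M_{\leq c_0}$. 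Stokes' theorem gives $\int_{\partial D}\omega_\rr = \int_D d\omega_\rr = 0$, so it suffices to show that $\int_Z \omega_\rr \expdecay 0$ uniformly for $\rr\in E$.

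The key step is to push $Z$ to arbitrarily low height. For any $N > -c_0$, the hypothesis $c_*(\rhat_*) > -\infty$ implies $\crit_{[-N,c_0]}(\str,\rhat_*) = \emptyset$ for every stratum $\str$, so Theorem~\ref{th:main}(i) furnishes a homotopy equivalence $\M_{\leq c_0} \cong \M_{\leq -N}$. Under it, $Z$ is homologous in $\M_{\leq c_0}$ to a $d$-cycle $Z_N$ supported in $\M_{\leq -N}$; that is, $Z - Z_N = \partial D_N$ for some $(d+1)$-chain $D_N \subset \M_{\leq c_0}$. A second application of Stokes yields $\int_Z \omega_\rr = \int_{Z_N}\omega_\rr$. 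Because $Z_N$ is the continuous image of a finite simplicial complex, its support is compact, so $F$ is bounded on it by some $M_N < \infty$ and it has finite $d$-volume $V_N$.

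To finish, I would invoke the comparability between $h_\rhat$ and $h_{\rhat_*}$ for $\rhat$ in the cone $E$, noted immediately before the statement of the theorem: this gives a constant $\delta = \delta(\ee) > 0$ with $h_\rhat(\zz) \leq -\delta N$ on $Z_N$ for all $\rhat\in E$, once $N$ is large. Hence $|\zz^{-\rr}| = \exp(|\rr|\, h_\rhat(\zz)) \leq \exp(-\delta N |\rr|)$ on $Z_N$, and therefore
$$\left|\int_{Z_N}\omega_\rr\right| \;\leq\; V_N\, M_N\, \exp(-\delta N |\rr|).$$
Since $N$ is arbitrary, $\int_Z \omega_\rr$ decays faster than any exponential in $|\rr|$, i.e., $\expdecay 0$, completing the proof.

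The main obstacle is ensuring the comparability $h_\rhat(\zz) \leq -\delta N$ holds \emph{uniformly on $Z_N$} rather than only on the original $Z$: the isotopy furnished by Theorem~\ref{th:main}(i) is driven by $h_{\rhat_*}$ and could in principle displace points by large amounts transverse to $\rhat_*$ in logspace, which would inflate $h_\rhat$ for $\rhat$ near the boundary of the cone. One resolves this by choosing the gradient-like vector field used in the proof of Theorem~\ref{th:main} so that its $\rhat_*$-component in logspace dominates its transverse component; then the transverse displacement of $Z_N$ relative to $Z$ stays bounded while the parallel displacement grows linearly in $N$, so for large $N$ the ratio of transverse to parallel log-moduli is negligible and the cone condition on $E$ translates height control from $h_{\rhat_*}$ to every $h_\rhat$ with uniform constants.
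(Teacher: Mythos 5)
Your first two paragraphs reproduce the paper's proof faithfully: the chain-level description of relative homology together with Stokes' theorem reduces the claim to showing $\int_Z \omega_\rr \expdecay 0$ for a cycle $Z$ supported in $\M_{\leq c_0}$, and Theorem~\ref{th:main}(i) (applied to intervals $[-N, c_0]$, which by hypothesis contain no stationary heights) then supplies a homologous cycle $Z_N$ at height $\leq -N$, after which the integral is bounded by $V_N M_N \sup_{Z_N} |\zz^{-\rr}|$. This is exactly the paper's route, which instead phrases the first reduction via the long exact sequence of the pair $(\M,\M_{\leq c})$.

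Your third paragraph flags a real subtlety. The paper closes the estimate with the inequality $h_{\rhat} \leq \ee\, h_{\rhat_*}$ on the pushed-down cycle, citing the remark preceding the theorem, but that inequality is not a pointwise fact in logspace: for $\xi$ with $\rhat_*\cdot\xi = 0$ and $\rhat\cdot\xi < 0$ one has $h_{\rhat_*}(\xi)=0$ while $h_{\rhat}(\xi)>0$. What is really needed is that the deformation of Theorem~\ref{th:main}(i) not let $Z_N$ drift too far transversely to $\rhat_*$, which is precisely the concern you raise. The flow from Lemma~\ref{lem:2} decreases $h_{\rhat_*}$ at unit speed, but $\vv$ is only known to be bounded, say $|\vv|\leq M$; running it for time $T\approx c_0+N$ permits transverse drift of order $T\sqrt{M^2-1}$.

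Where your remedy overreaches is the claim that the transverse displacement can be made to \emph{stay bounded} while the parallel displacement grows linearly in $N$: with a bounded field of unit $h_{\rhat_*}$-speed both scale like $N$. The correct (weaker) statement suffices, namely that the transverse rate is strictly smaller than $\sin\ee$, i.e.\ $M<1/\cos\ee$; then
$h_{\rhat}\bigl(\flow(\xi,T)\bigr) \leq h_{\rhat}(\xi) - T\bigl(\sin\ee - \sqrt{M^2-1}\bigr) \to -\infty$
uniformly over $\rhat\in E$. Since $M$ is controlled by the lower bound $\delta$ in Lemma~\ref{lem:1}, this couples the cone aperture $\ee$ to the geometry and requires an argument that the paper does not spell out. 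So your instinct that there is something to check is sound, but the phrase ``stays bounded'' should be replaced by the rate comparison above, and one still owes a verification that $\vv$ can be chosen with $M<1/\cos\ee$ (or one must shrink the cone $E$).
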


\begin{proof}
Fix any $c < c_*$.  Suppose $C_1 = C_2$ in 
$H_d (\M , -\infty)$.  From the exactness of
$$H_d (\M_{\leq -c}) \to H_d (\M) \to  H_d (\M , \M_{\leq c}) \, ,$$
observing that $C_1 - C_2$ projects to zero in $H_d (\M , \M_{\leq c})$,
it follows that $C_1 - C_2$ is homologous in $H_d (\M)$ to some cycle
$C \in \M_{\leq c}$.  Homology in $\M$ determines the integral
exactly.  Therefore, it suffices to show that $\int_C \zz^{\rr} 
F(\zz) \, d\zz \expdecay 0$.

As a consequence of the homotopy equivalence in part~$(i)$ of 
Theorem~\ref{th:main}, for any $t < c_*$ there is a cycle 
$C_t$ supported on $\M_{\leq t}$ and homologous to $C$ in $\M$.
Fix such a collection of cycles $\{ C_t \}$.  Let
$M_t := \sup \{ |F (\zz)| \, : \, \zz \in C_t \}$ and let
$V_t$ denote the volume of $C_t$.  Observe that $|\zz^{-\rr}|
= \exp (|\rr| h_{\rhat} (\zz)) \leq \exp (\ee |\rr| h_{\rhat_*} (\zz)) 
\leq \exp (\ee t |\rr|)$ on $C_t$.  It follows that
\begin{eqnarray*}
\int_C \zz^{-\rr} F(\zz) \, d\zz & = & \int_{C_t}  \zz^{-\rr} F(\zz) \, d\zz \\
& \leq & V_t M_t \exp (t |\rr|).
\end{eqnarray*}
Because this inequality holds for all $t < c_*$, the integral is
thus smaller than any exponential function of $|\rr|$.
\qed
\end{proof}

When $F = P/Q$ is rational we may strengthen determination up to 
$\expdecay$ to exact equality for all but finitely many coefficients.
The {\bf Newton polytope}, denoted 
$\newpol$, is defined as the convex hull of degrees $\mm \in \Z^d$ 
of monomials in $Q$.  
It is known (see, e.g.,~\cite{forsberg-passare-tsikh}) that 
the components of $\amoeba(Q)^c$ map injectively into the 
integer points in $\newpol$, and that to each extreme point 
$\newpol$ corresponds a non-empty component.  Moreover, this
can be done in such a way that the 
recession cone of a component (collection of directions 
of rays contained in the component) equals the 
dual cone of the Newton polytope at the corresponding vertex;
see Figure~\ref{fig:amoeba}.  
Hence the linear function $\xi \mapsto -(\xi \cdot \rr)$ 
is unbounded from below on any component
when $\rr$ points in the same direction as any element of the dual cone 
of the Newton polytope $\newpol(Q)$ at the corresponding integer point. 
Fix the component $B$ corresponding to the Laurent expansion
$F = \sum_{\rr} a_\rr \zz^\rr$ and integer point $\mathbf{v}$ in the 
Newton polytope.  The closed dual cones at
extreme points of the Newton polytope cover all of $\R^d$, therefore
there exists a component $B'$ of the amoeba complement
(probably many components would do) with $h_{\rhat} (\xi) \to -\infty$ 
linearly in $|\xi|$ as $\xi \to \infty$ in $B'$.

\begin{figure}
\centering
\includegraphics[width=0.35\linewidth]{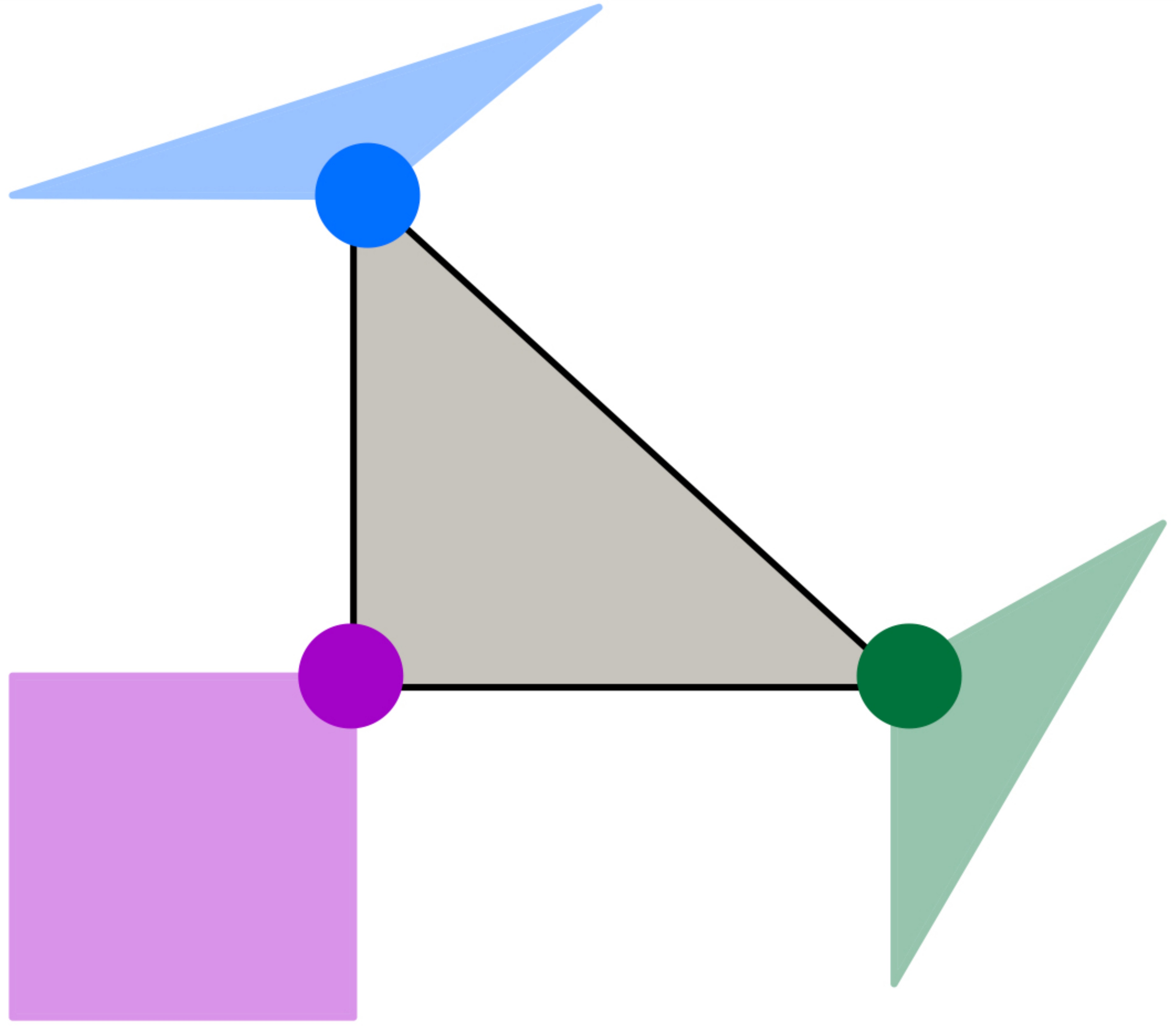}
\qquad\qquad\qquad
\includegraphics[width=0.4\linewidth]{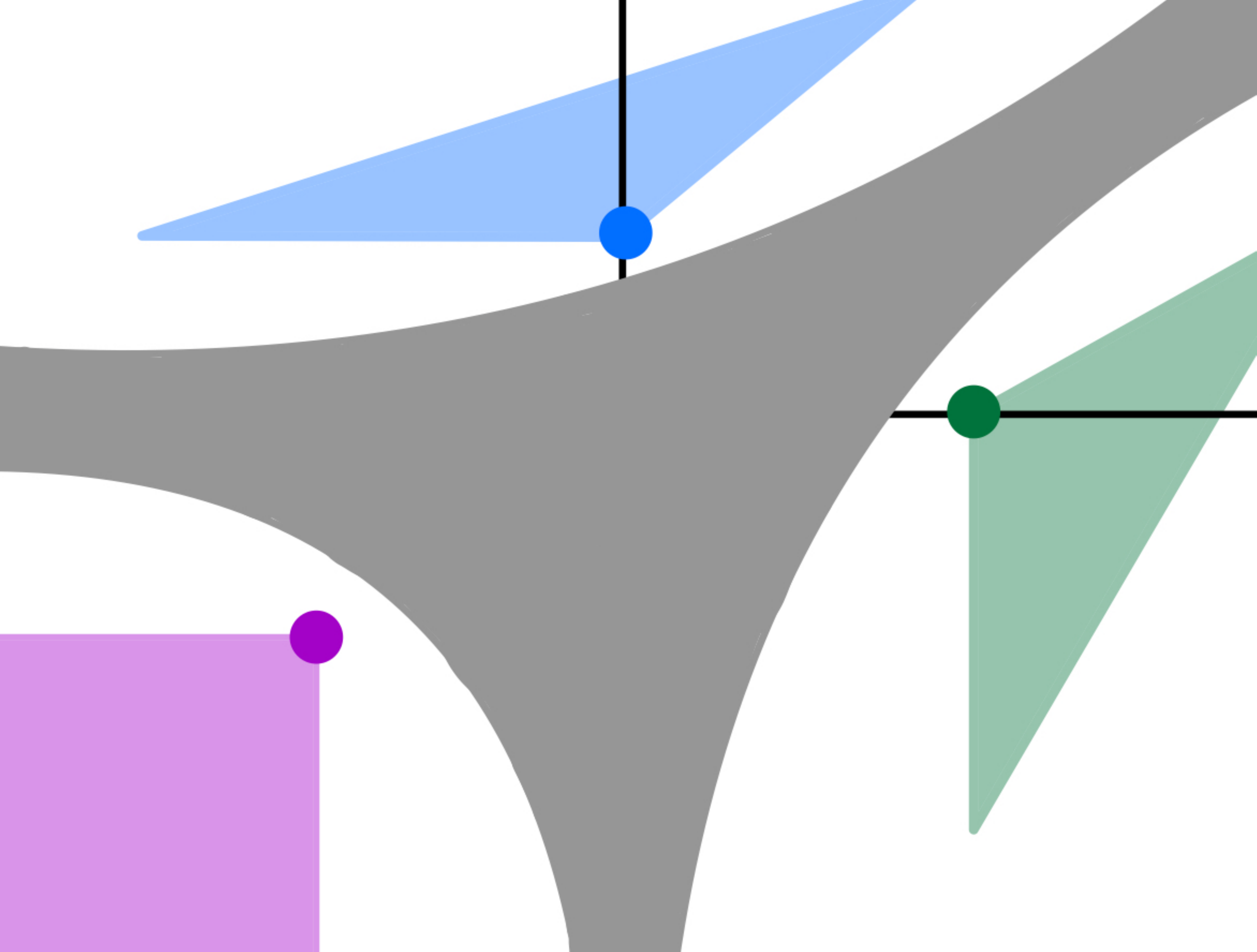}
\caption{\emph{Left:} The Newton polytope of $Q(x,y) = 1-x-y$ together with the dual cones
at each vertex. \emph{Right:} The amoeba of $Q(x,y)$ together with the recession cones of
the complement components. }
\label{fig:amoeba}
\end{figure}

\begin{proposition} \label{pr:vanish}
Let $\TT (\xi)$ denote the centered torus with polyradii $\exp (\xi_1) , 
\ldots , \exp (\xi_d)$.  If $F = P/Q$ is rational and $\xi \in B'$, then
$$\int_{\TT (\xi)} \zz^{-\rr} F(\zz) \, d\zz = 0$$
for all but finitely many $\rr \in E$, the support of the 
Laurent expansion on $B$.
\end{proposition}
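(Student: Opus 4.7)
The plan is to identify the integral with a Laurent coefficient of $F$ in the expansion convergent on $B'$, and then to deduce its vanishing for all but finitely many $\rr \in E$ by comparing the supports of the two Laurent expansions.

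First I would observe that for each $\xi \in B'$ the torus $\TT(\xi)$ lies in $\M$, and that $\{\TT(\xi)\}_{\xi \in B'}$ is a continuous family of cycles in $\M$: since $B'$ is a connected open component of $\amoeba(Q)^c$, the straight-line segment between any two points of $B'$ stays in $B'$, and the corresponding family of tori avoids $\sing$. Hence $I(\rr) := \int_{\TT(\xi)}\zz^{-\rr}F(\zz)\,d\zz$ is independent of $\xi \in B'$, and the multivariate Cauchy formula identifies it with a nonzero constant multiple of a coefficient $a'_{\rho}$ of $\zz^{\rho}$ in the Laurent expansion of $F$ convergent on $B'$, where $\rho$ equals $\rr$ or $\rr - \mathbf{1}$ according to the measure convention. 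Letting $E'$ denote the support of this $B'$-expansion, vanishing of $I(\rr)$ is equivalent to $\rho \notin E'$, so the proposition reduces to showing that $E$ and an integer shift of $E'$ meet in only finitely many lattice points.

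Next I would invoke the classical cone description of supports of Laurent expansions of rational functions. Writing $Q = q_{\mathbf{v}}\zz^{\mathbf{v}}(1+R)$, where $\mathbf{v}$ is the vertex of $\newpol(Q)$ dual to $B$ and $R$ is the sum of subdominant monomials (with $|R|<1$ uniformly on compact subsets of $B$), a geometric-series expansion of $(1+R)^{-1}$ gives $E \subseteq \mathrm{supp}(P) - \mathbf{v} + C_{\mathbf{v}}$, where $C_{\mathbf{v}}$ is the tangent cone to $\newpol(Q)$ at $\mathbf{v}$; analogously $E' \subseteq \mathrm{supp}(P) - \mathbf{v}' + C_{\mathbf{v}'}$ with $\mathbf{v}'$ the vertex dual to $B'$. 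By a standard convex-geometric fact, the intersection of two such shifted cones, when nonempty, has recession cone $C_{\mathbf{v}} \cap C_{\mathbf{v}'}$, and therefore contains only finitely many integer points provided $C_{\mathbf{v}} \cap C_{\mathbf{v}'} = \{\mathbf{0}\}$.

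The crux of the proof is thus selecting $B'$ so that $C_{\mathbf{v}} \cap C_{\mathbf{v}'} = \{\mathbf{0}\}$, and this is precisely where the parenthetical remark ``(probably many components would do)'' is exploited. Among all components of $\amoeba(Q)^c$ whose recession cones give $h_{\rhat} \to -\infty$ linearly (equivalently, those with $-\rhat \in \mathrm{int}(C_{\mathbf{v}'})$), I would select one for which $\mathbf{v}'$ is genuinely opposite to $\mathbf{v}$ in the strong sense that there exists $\mathbf{w} \in \R^d$ strictly maximizing $\mathbf{w}\cdot\mathbf{u}$ on $\newpol(Q)$ at $\mathbf{v}'$ and strictly minimizing it at $\mathbf{v}$. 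Any such $\mathbf{w}$ has positive inner product with every nonzero generator $\mathbf{u}-\mathbf{v}$ of $C_{\mathbf{v}}$ and negative inner product with every nonzero generator $\mathbf{u}-\mathbf{v}'$ of $C_{\mathbf{v}'}$, so a nonzero common element of $C_{\mathbf{v}}$ and $C_{\mathbf{v}'}$ would yield a contradiction. The main obstacle is verifying that such an opposite $\mathbf{v}'$ always exists among the candidates: the closed dual cones at extreme points of $\newpol(Q)$ tile $\R^d$, ensuring existence of some vertex opposite to $\mathbf{v}$, and the overlap of neighbouring tangent cones provides enough flexibility to choose $\mathbf{v}'$ simultaneously compatible with $\rhat$ and genuinely opposite to $\mathbf{v}$; once this selection is made, the preceding paragraphs conclude the proof.
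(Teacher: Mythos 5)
Your proof takes a genuinely different route from the paper's. The paper argues by a direct decay estimate: moving $\xi$ off to infinity along a ray in $B'$ leaves the integral unchanged (the tori stay homotopic in $\M$), while the integrand is bounded by a product of the torus volume, $\sup|F|$, and $|\zz^{-\rr}|$; the first two grow at most polynomially in the polyradii and the third decays exponentially once $\rr$ is large in the appropriate direction, so the constant value of the integral must be zero. You instead recognize $\int_{\TT(\xi)}\zz^{-\rr}F\,d\zz$ as $(2\pi i)^d$ times a Laurent coefficient of the $B'$-expansion and reduce the statement to the purely combinatorial-geometric claim that $E$ and a shift of $E'$ meet in only finitely many lattice points, via the standard cone description of supports of Laurent expansions. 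Both routes are legitimate, and your reformulation has the virtue of making explicit a point the paper's brief estimate keeps implicit: the conclusion is sensitive to which component $B'$ is used, not merely to the requirement that $h_{\rhat}\to -\infty$ there.

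There is, however, a genuine gap at the crux of your argument. You assert that among the components satisfying the height condition one can always select $\mathbf{v}'$ that is ``genuinely opposite'' to $\mathbf{v}$, i.e.\ with some $\mathbf{w}$ strictly maximized at $\mathbf{v}'$ and strictly minimized at $\mathbf{v}$, and you justify this only with the remark that the overlap of neighbouring tangent cones ``provides enough flexibility.'' This is exactly the point that needs a real proof. The two requirements on $\mathbf{v}'$ are of quite different nature — one is a condition relating $\mathbf{v}'$ to the fixed direction $\rhat$, the other a condition relating $\mathbf{v}'$ to the fixed vertex $\mathbf{v}$ — and it is not evident that they can always be met simultaneously, nor that the vertex singled out by the paper's preamble (the one whose normal cone contains $\rhat$) is such a vertex. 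Without this, the reduction to $C_{\mathbf{v}}\cap C_{\mathbf{v}'}=\{\mathbf{0}\}$ does not establish the proposition. By contrast, the paper sidesteps any analysis of $E'$ by working only with pointwise size estimates on the torus, though it too leaves implicit why those estimates handle all but finitely many $\rr\in E$. If you want to pursue your route, you need to prove that the paper's $\mathbf{v}'$ (or some admissible replacement) satisfies the strict-separation property; otherwise the cleaner course is the paper's analytic estimate, keeping careful track of the direction $\uu$ along which $\xi$ is sent to infinity and checking that $\rr\cdot\uu$ is proper on $E$.
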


\begin{proof}
By assumption, there is a continuous path moving $\xi$ to infinity 
within $B'$.  On the corresponding tori, the (constant) value 
of $\hr$ approaches $-\infty$.  Let $\TT (\xi_t)$ denote such a
torus supported on $\M_{\leq t}$.  Because the tori are all homotopic 
in $\M$, the value of the integral 
\begin{equation} \label{eq:TT_t}
\int_{\TT (\xi_t)} \zz^{-\rr} F(\zz) \, d\zz
\end{equation}
cannot change.  On the other hand, with $M_t$ and $V_t$ as in the
proof of the first part, both $M_t$ and $V_t$ are bounded by
polynomials in $|\zz|$, the common polyradius of points in $\TT (\xi_t)$.
Once any coordinate $r_j$ is great enough so that the product
of the volume and the maximum grows more slowly than $|z_j|^{r_j}$
the integral for that fixed $\rr$ goes to zero as $t \to -\infty$,
and thus is identically zero.
\qed
\end{proof}

The utility of Proposition~\ref{pr:vanish} is to represent the Cauchy integral
as a tube integral.  Let $\TT = \TT (\xi)$ for $\xi \in B$, the
component of $\amoeba(Q)^c$ defining the Laurent expansion, and 
choose $\TT' = \TT (\xi')$ for $\xi' \in B'$ as
in Proposition~\ref{pr:vanish}.  By Proposition~\ref{pr:int}, if
$\gamma$ denotes the intersection class $\I (\TT , \TT')$, we have
$\TT = \leray \gamma + \TT'$ in $H_d (\M)$.  By Proposition~\ref{pr:vanish}, 
the integral over $\TT'$ vanishes for all but finitely many $\rr \in E$,
yielding

\begin{corollary} \label{cor:tube}
If $F = P/Q$ is rational and $\sing$ is smooth, then 
there exists a $(d-1)$-cycle of integration $\gamma$
in $\sing_*$ such that for all but finitely
many $\rr \in E$, 
$$ a_\rr = \frac{1}{(2 \pi i)^d}\int_{\leray \gamma} \zz^{-\rr - \one} F(\zz) \, d\zz
   \, .$$
\qed
\end{corollary}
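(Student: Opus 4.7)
The plan is to combine the three preceding results -- the Cauchy representation of $a_\rr$, the vanishing of Proposition~\ref{pr:vanish}, and the intersection-class machinery of Proposition~\ref{pr:int} -- into a single homological identity.

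First I would write the coefficient via the Cauchy integral~\eqref{eq:cauchy} using a torus $\TT = \TT(\xi)$ with $\xi\in B$, so that
$$a_\rr \;=\; \frac{1}{(2\pi i)^d}\int_{\TT} \zz^{-\rr-\one} F(\zz)\,d\zz,$$
after identifying $\zz^{-\rr} \, d\zz/\zz$ with $\zz^{-\rr-\one}\, d\zz$. Next I would select a second torus $\TT' = \TT(\xi')$ with $\xi'\in B'$, where $B'$ is the amoeba-complement component furnished just before Proposition~\ref{pr:vanish}. Both $\TT$ and $\TT'$ lie in $\M$ (since their log-moduli lie in the complement of $\amoeba(Q)$), and they are homologous in $\C_*^d$: any continuous path of polyradii from $\xi$ to $\xi'$ gives a homotopy between them in $\C_*^d$, since $\C_*^d$ has no singularity and standard tori are homotopic there. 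Hence the difference $\TT-\TT'$ defines a class in $\ker(\iota_*)\subset H_d(\M)$.

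Now I would invoke Proposition~\ref{pr:int}. By part~$(i)$ the Thom map $\leray : H_{d-1}(\sing_*)\to H_d(\M)$ is injective with image $\ker(\iota_*)$, so there exists a unique class $\gamma := \I(\TT-\TT')\in H_{d-1}(\sing_*)$ with $\leray\gamma = [\TT - \TT']$ in $H_d(\M)$. Part~$(ii)$ of the same proposition actually constructs $\gamma$ as the transverse intersection of $\sing_*$ with any $(d+1)$-chain in $\C_*^d$ bounded by $\TT-\TT'$. Thus in $H_d(\M)$ we have the identity $\TT = \leray\gamma + \TT'$. Since the integrand $\zz^{-\rr-\one} F(\zz)\, d\zz$ is a holomorphic $d$-form on $\M$, its integral over any $d$-cycle depends only on the homology class in $H_d(\M)$, giving
$$\int_{\TT} \zz^{-\rr-\one} F(\zz)\, d\zz \;=\; \int_{\leray\gamma} \zz^{-\rr-\one} F(\zz)\, d\zz \;+\; \int_{\TT'} \zz^{-\rr-\one} F(\zz)\, d\zz.$$

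Finally, I would apply Proposition~\ref{pr:vanish} directly to the last summand: since $\xi'\in B'$, the integral $\int_{\TT'} \zz^{-\rr-\one}F(\zz)\,d\zz$ vanishes for all but finitely many $\rr\in E$. Substituting back into the Cauchy representation yields the claimed equality for cofinite $\rr$. There is no substantial obstacle here: the only point that deserves care is verifying that both tori define classes in $\M$ and in $\C_*^d$ as required by Proposition~\ref{pr:int}, and that the exceptional finite set matches the one produced by Proposition~\ref{pr:vanish}; both are routine given the constructions already in place.
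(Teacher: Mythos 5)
Your proposal is correct and mirrors the paper's argument exactly: the paper likewise forms $\TT'=\TT(\xi')$ with $\xi'\in B'$, applies Proposition~\ref{pr:int} to write $\TT = \leray\gamma + \TT'$ in $H_d(\M)$ with $\gamma=\I(\TT,\TT')$, and then invokes Proposition~\ref{pr:vanish} to kill the $\TT'$ integral for cofinitely many $\rr\in E$. The extra care you take in verifying $[\TT-\TT']\in\ker(\iota_*)$ and that holomorphy of the integrand reduces the integral to a homology-class invariant is implicit in the paper but nothing beyond it.
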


\subsection{Residues on smooth varieties} \label{ss:residues}

This section again assumes that $\sing_*$ is smooth.
Having transferred homology from $\M$ to $\sing_*$ via intersection
classes, we transfer integration there as well via residues.  
The point of this is to obtain integrals amenable to a saddle point 
analysis: pushing down cycles until their maximum height is minimized 
drives the maximum to occur on $\sing_*$, not on $\M$.  
Thus we need a reduction to saddle point integrals on $\sing_*$
rather than on $\M$.  In what follows, $H^* (X)$ denotes the 
holomorphic de Rham complex, whose $k$-cochains are
holomorphic $k$ forms.  The following duality between 
residues and tubes is well known.

\begin{proposition}[residue theorem] \label{pr:residue}
There is a functor $\Res : H^d (\M) \to H_{d-1} (\sing_*)$ such 
that for any class $\gamma \in H_d (\sing)$ and every $\omega \in
H^d (\M)$,
\begin{equation} \label{eq:residue}
\int_{\leray \gamma} \omega = 2 \pi i \, \int_{\gamma} \Res (\omega) \, .
\end{equation}
The residue functor is defined locally and, when $Q$ is squarefree,
it commutes with products by any locally holomorphic scalar function.  
If, furthermore, $F = P/Q$ is rational, there is an implicit formula
$$Q \wedge \Res (F \, d\zz) = P \, d\zz \, .$$
For higher order poles, the residue can be computed by choosing 
coordinates: if $F = P / Q^k$, and locally $\{Q=0\}$ defines a graph of a function,
$\{z_1=S(z_2,\ldots,z_d)\}$, 
then
\begin{equation} \label{eq:residue k}
\Res_\sing \left [ \zz^{-\rr} F(\zz) \frac{d\zz}{\zz} \right ]
   := \left.\frac{1}{(k-1)! (\partial Q / \partial z_1)^k}
   \frac{d^{k-1}}{dz_1^{k-1}} \left [
   \frac{P \zz^{-\rr}}{\zz} \right ]\right\vert_{z_1=S(z_2,\ldots,z_d)} \, dz_2 \wedge \cdots \wedge dz_d \, .
\end{equation}
\end{proposition}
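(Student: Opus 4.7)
The plan is to construct $\Res$ locally on smooth patches of $\sing_*$ via Leray's coboundary, verify coordinate independence, globalize by partition of unity, and deduce the integral identity by Fubini's theorem. At any point $p \in \sing_*$, smoothness of $\sing$ and non-vanishing of $dQ$ there guarantee a holomorphic coordinate system $(w_1,\ldots,w_d)$ on a neighborhood $U$ with $\sing \cap U = \{w_1 = 0\}$. For a closed $d$-form $\omega$ on $U\setminus\sing$ with at most a simple pole along $\sing$, I would write $\omega|_U = (dw_1/w_1)\wedge\eta + \eta'$ with $\eta,\eta'$ holomorphic, and set $\Res(\omega)|_U := \eta|_{w_1=0}$. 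Under any change of divisorial coordinate $\tilde w_1 = u\, w_1$ with $u$ a nonvanishing unit, the difference $d\tilde w_1/\tilde w_1 - dw_1/w_1 = du/u$ is holomorphic, so $\Res(\omega)|_U$ is invariant. A partition of unity on $\sing$ then glues these local $(d-1)$-forms into a global class in $H_{d-1}(\sing_*)$.

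For the tube integral formula, I would shrink the radii of $\leray \gamma$ so that in each patch $U$ the tube is a trivial circle bundle $\{|w_1| = \epsilon\}$ over $\gamma\cap U$. Fubini's theorem yields
\begin{equation*}
\int_{\leray\gamma \cap U} \omega = \int_{\gamma\cap U}\left(\int_{|w_1|=\epsilon}\frac{dw_1}{w_1}\right)\eta|_{w_1=0} + \int_{\leray\gamma\cap U}\eta' = 2\pi i \int_{\gamma\cap U}\Res(\omega) + O(\epsilon).
\end{equation*}
The integral on the left is independent of $\epsilon$ by Cauchy's theorem on the holomorphic cylinder between any two radii, so the equality is exact, and partition of unity gives \eqref{eq:residue} globally. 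Multiplication by a locally holomorphic scalar $\phi$ simply multiplies $\eta$ and $\eta'$ by $\phi$, yielding $\Res(\phi\omega) = \phi|_\sing \cdot \Res(\omega)$. Applied to $\omega = F\,d\zz$ with $F = P/Q$ and $Q$ squarefree, the decomposition $d\zz = (dQ/(\partial Q/\partial z_1))\wedge dz_2\wedge\cdots\wedge dz_d$ in a chart where $\partial Q/\partial z_1 \neq 0$ identifies the relevant $\eta$ explicitly and gives the implicit relation $dQ \wedge \Res(F\,d\zz) = P\,d\zz$ on $\sing$.

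For higher-order poles $F = P/Q^k$, I would parameterize $\sing$ locally as the graph $\{z_1 = S(z_2,\ldots,z_d)\}$ in a chart where $\partial Q/\partial z_1 \neq 0$, so that $Q = (z_1 - S)\cdot g$ with $g$ a nonvanishing holomorphic unit whose restriction to $\sing$ equals $\partial Q/\partial z_1$. The residue operation then reduces to a classical one-variable residue in $z_1$ at a pole of order $k$, taken fiberwise in $(z_2,\ldots,z_d)$: the formula $\frac{1}{(k-1)!}(d^{k-1}/dz_1^{k-1})[\,\cdot\,]|_{z_1=S}$ applied to the smooth factor, combined with the factorization $Q^k = (z_1-S)^k g^k$ and evaluation of $g$ on $\sing$, produces the expression displayed in~\eqref{eq:residue k}. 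The main obstacle throughout is verifying that the Leray decompositions patch compatibly and that the holomorphic remainder $\eta'$ genuinely contributes nothing to the tube integral after shrinking $\epsilon$; once this local bookkeeping is in place, the global formulas follow by linearity and the rational-function identities are routine calculations.
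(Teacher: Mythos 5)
Your proof is correct and follows essentially the same route as the paper: pass to local coordinates adapted to the smooth hypersurface $\sing_*$, decompose $\omega$ via Leray's coboundary, and reduce the tube integral to a one-variable residue computed fiberwise; the paper's proof is a one-line version of exactly this. You also write the implicit relation as $dQ \wedge \Res(F\,d\zz) = P\,d\zz$, which is the correct (type-checking) form and silently fixes a typo in the stated proposition, where $Q \wedge \Res(F\,d\zz)$ would be a $(d-1)$-form rather than a $d$-form.
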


\begin{proof}
Restrict to a neighborhood of the support of the cycle $\gamma$ in the
smooth variety $\sing_*$ coordinatized so that the last coordinate
is $Q$.  The result follows by applying the (one variable) residue 
theorem, taking the residue in the last variable.
\qed
\end{proof}

Applying this to intersection classes and using homology relative 
to $-\infty$ to simplify integrals yields the following representation.
\begin{theorem} \label{th:residue}
Let $F = P/Q$ be the quotient of Laurent polynomials with Laurent
series $\sum_{\rr \in E} a_\rr \zz^{\rr}$ converging on $\TT (\xx)$
when $\xx \in B$, where $B$ is a component of $\amoeba(Q)^c$.  Fix 
$\rhat_*$ and assume the minimal stationary value $c_* (\rhat_*)$ is finite
and $K_\infty (\rhat_*)$ is empty.  
Let $B'$ denote a component of the complement of $\amoeba(Q)$ on which
$h_{\rhat_*}$ goes linearly to $-\infty$, as constructed prior to
Proposition~\ref{pr:vanish}.
Then for any $\xx \in B$ and $\yy \in B'$,
$$a_\rr = \frac{1}{(2 \pi i)^{d-1}} \int_{\I (\TT , \TT')}
   \Res \left (  \zz^{-\rr}F(\zz) \, \frac{d\zz}{\zz} \right ) \, $$
   for all but finitely many $\rr$.
If $P$ is replaced by any holomorphic function then the same 
representation of $a_\rr$ holds up to a function decreasing
super-exponentially in $|\rr|$.
\end{theorem}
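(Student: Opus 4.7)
The plan is to combine the four main ingredients developed earlier in the section: the Cauchy integral representation of $a_\rr$, the homological decomposition $[\TT] = [\TT'] + \leray \I(\TT,\TT')$ in $H_d(\M)$, the vanishing (or super-exponential smallness) of the Cauchy-type integral over $\TT'$, and the residue identification converting a tube integral into an integral on $\sing_*$.

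First I would start from the defining Cauchy formula~\eqref{eq:cauchy}, namely
$$a_\rr = \frac{1}{(2\pi i)^d} \int_\TT \zz^{-\rr} F(\zz) \frac{d\zz}{\zz},$$
for any torus $\TT = \TT(\xx)$ with $\xx \in B$. Because both $\TT(\xx)$ and $\TT'=\TT(\yy)$ are centered tori in $\Comp_*^d$, they are homologous in $H_d(\Comp_*^d)$, so $\TT - \TT' \in \ker(\iota_*)$. By Proposition~\ref{pr:int}, there is a well-defined intersection class $\gamma := \I(\TT,\TT') \in H_{d-1}(\sing_*)$ with $[\TT] - [\TT'] = \leray \gamma$ in $H_d(\M)$. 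Hence $\int_\TT \omega = \int_{\TT'} \omega + \int_{\leray \gamma} \omega$ for the holomorphic form $\omega = \zz^{-\rr} F(\zz) \, d\zz/\zz$.

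Next I would dispose of the $\TT'$ integral. In the rational case $F=P/Q$, Proposition~\ref{pr:vanish} applies directly to the component $B'$ (chosen so that $h_{\rhat_*}$ is unbounded below on $B'$), yielding $\int_{\TT'} \omega = 0$ for all but finitely many $\rr \in E$. In the holomorphic-numerator case, I would invoke Theorem~\ref{th:low}: because $c_*(\rhat_*)$ is finite and $K_\infty(\rhat_*)$ is empty, part~(i) of Theorem~\ref{th:main} lets us deform $\TT'$ to a homologous cycle of arbitrarily low $h_{\rhat_*}$-height (since the recession cone of $B'$ allows $\xi' \to \infty$ with $h_{\rhat_*}(\xi')\to -\infty$ linearly). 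The volume/sup-norm estimate at the end of the proof of Theorem~\ref{th:low} then shows $\int_{\TT'} \omega \expdecay 0$.

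Finally I would apply Proposition~\ref{pr:residue} to replace the tube integral by a residue integral:
$$\int_{\leray \gamma} \omega = 2\pi i \int_{\gamma} \Res(\omega),$$
and divide the leading factor of $2\pi i$ into $(2\pi i)^{-d}$ to obtain the claimed $(2\pi i)^{-(d-1)}$. Assembling the three pieces gives the desired identity, exact in the rational case for cofinitely many $\rr$ and modulo a super-exponentially decaying error in the holomorphic case. The main obstacle I anticipate is checking that the intersection class $\gamma$ is actually well-defined as stated and that the hypothesis $K_\infty(\rhat_*)=\emptyset$ is genuinely what is needed to push $\TT'$ arbitrarily low; both reduce to clean applications of Propositions~\ref{pr:int} and~\ref{pr:vanish} together with Theorem~\ref{th:main}(i), but it is important to verify that the hypothesis on $c_*(\rhat_*)$ and H-SPAI transfers to the direction required to make $h_{\rhat_*}$ unbounded below on $B'$ (which is guaranteed by the construction of $B'$ preceding Proposition~\ref{pr:vanish}).
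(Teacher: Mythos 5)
Your proposal is correct and follows essentially the same route as the paper: Cauchy's formula, the decomposition $[\TT]=[\TT']+\leray\I(\TT,\TT')$ from Proposition~\ref{pr:int}, elimination of the $\TT'$ integral via Proposition~\ref{pr:vanish} (rational case) or Theorem~\ref{th:low} (holomorphic numerator), and Proposition~\ref{pr:residue} to convert the tube integral to a residue integral on $\sing_*$. The only cosmetic difference is that the paper also offers Corollary~\ref{cor:tube} as an alternative to Proposition~\ref{pr:vanish}, but the content is the same.
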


\begin{proof}
If $P$ is polynomial then, for all but finitely many $\rr \in E$,
\begin{eqnarray*}
(2 \pi i)^{d-1} a_\rr & = & \frac{1}{2 \pi i} \int_{\TT (\xx)} 
   \zz^{-\rr} F(\zz) \, \frac{d\zz}{\zz} \\
& = & \frac{1}{2 \pi i} \int_{\leray \I (\TT , \TT')} \zz^{-\rr} F(\zz) 
   \, \frac{d\zz}{\zz} \; + \; 
  \frac{1}{2 \pi i}  \int_{\TT(\yy)} \zz^{-\rr} F(\zz) \, \frac{d\zz}{\zz} \\
& = & \frac{1}{2 \pi i} \int_{\leray \I (\TT , \TT')} \zz^{-\rr} F(\zz) 
   \, \frac{d\zz}{\zz} \, .
\end{eqnarray*}
The first line above is Cauchy's integral formula, the second is
Proposition~\ref{pr:int}, and the third is Corollary~\ref{cor:tube}
or Proposition~\ref{pr:vanish}.  By the Residue Theorem, 
$$ \frac{1}{2 \pi i} \int_{\leray \I (\TT , \TT')} \zz^{-\rr} F(\zz) =
   \int_{\I (\TT , \TT')} \zz^{-\rr} \Res \left ( F(\zz) 
   \, \frac{d\zz}{\zz} \right ) \, ,$$
proving the theorem when $P$ is a polynomial.  When $P$ is not polynomial,
use Theorem~\ref{th:low} in place of Proposition~\ref{pr:vanish}
in the last line.
\qed
\end{proof}

Combining Theorems~\ref{th:main2} and~\ref{th:residue} yields
the most useful form of the result: a representation of the
coefficients $a_\rr$ in terms of integrals over relative
homology generators produced by the stratified Morse decomposition.
In the following theorem we remove the assumption that $\sing_*$
is smooth, though we still use residues at the smooth points.

Let $\sigma_1, \ldots , \sigma_m$ enumerate the stationary points
of $\sing_*$ in weakly decreasing order of height $c_1 \geq c_2 \geq
\cdots \geq c_m$.  For each $j$, denote the relevant homology pair by
\begin{equation} \label{eq:GM}
(X_j^+ , X_j^+) := (\sing_{\leq c_j - \ee} \cup B_j ,
   \sing_{\leq c_j - \ee})
\end{equation}
where $B_j$ is a sufficiently small ball around $\sigma_j$ in $\sing$.
Let $k_j := \dim H_{d-1} (X_j^+ , X_j^-)$ and let $\beta_{j,1} , \ldots , 
\beta_{j,k_j}$ denote cycles in $H_{d-1} (X_j^+)$ that project to a 
basis for $H_{d-1} (X_j^+ , X_j^-)$ with integer coefficients.  

In the case where $\sigma_j$ is a smooth point of $\sing$, 
stratified Morse theory~\cite{GM} implies that $k_j = 1$ and 
$\beta_{j,1} = \leray \gamma_j$ is a cycle agreeing locally with 
a tube around the unstable manifold $\gamma_j$ for the downward 
$h_{\rhat}$ gradient flow on $\sing$.  This leads
to the following decomposition for $a_\rr$.

\begin{theorem}[stratified Morse homology decomposition] \label{th:SMT}
Let $F = P/Q$ be rational.  Fix $\rhat_*$, assume $K_\infty (\rhat_*)$
is empty, and enumerate the affine stationary points 
$\sigma_1 \, \ldots , \sigma_m$ as above. 
Then there are integers $\{ n_{j,i} : 1 \leq j \leq m, 
1 \leq i \leq k_j \}$ such that
\begin{equation} \label{eq:SMT residue}
a_\rr = \frac{1}{(2 \pi i)^{d-1}} \sum_{j=1}^m \sum_{i=1}^{k_j} 
   n_{j,i} \int_{\beta_{j,i}} 
   \zz^{-\rr} F(\zz) \frac{d\zz}{\zz} \, .
\end{equation}
When $\sigma_j$ is a smooth point of $\sing_*$, then $k_j = 1$,
the cycle $\beta_j$ agrees locally with a tube $\gamma_j$ around
the unstable manifold $\gamma_j$ at $\sigma_j$ for the downward 
gradient flow of $h_{\rhat_*}$ on $\sing$ and the corresponding summand
in~\eqref{eq:SMT residue} is given by
$$\int_{\gamma_j}
   \Res \left ( \zz^{-\rr} F(\zz) \frac{d\zz}{\zz} \right ) \, .$$
\end{theorem}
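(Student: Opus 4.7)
The plan is to combine the stratified Morse decomposition of Theorem~\ref{th:main2}(i) with the tube/residue machinery from Sections~\ref{ss:intersection}--\ref{ss:residues}, and then identify the local attachment cycles at each $\sigma_j$ using classical Morse theory on $\sing$ at its smooth points and stratified local Morse data at its singular points.

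First, because $K_\infty(\rhat_*) = \emptyset$, Theorem~\ref{th:main2}(i) supplies the direct sum decomposition $H_d(\M) \cong \bigoplus_{j=1}^m G_j$, so the Cauchy torus $[\TT]$ may be written $\sum_j C_j$ with each $C_j$ an absolute $d$-cycle in $\M_j^+$.  Cauchy's formula combined with homotopy invariance then gives
$$a_\rr \;=\; \frac{1}{(2\pi i)^d} \sum_{j=1}^m \int_{C_j} \zz^{-\rr} F(\zz)\,\frac{d\zz}{\zz},$$
after checking that any residual piece of $[\TT] - \sum_j C_j$ can be pushed below all stationary values—by Theorem~\ref{th:main}(i), since $K_\infty$ is empty—and contributes zero to the integral for all but finitely many $\rr$ via Proposition~\ref{pr:vanish}, applied in a component of $\amoeba(Q)^c$ on which $h_{\rhat_*} \to -\infty$ (constructed as before Theorem~\ref{th:residue}).

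Next I would rewrite each $C_j$, modulo its boundary in $\M_j^-$, as an integer combination $\sum_i n_{j,i}\, \leray \beta_{j,i}$ of tubes around a basis $\{\beta_{j,i}\}$ of $H_{d-1}(X_j^+,X_j^-)$, using the (local) Thom--Gysin/tube isomorphism of Proposition~\ref{pr:int}.  Applying the Residue Theorem (Proposition~\ref{pr:residue}) to each tube integral produces a factor of $2\pi i$ that upgrades the Cauchy prefactor $1/(2\pi i)^d$ to the $1/(2\pi i)^{d-1}$ of~\eqref{eq:SMT residue}.  For the smooth clause, at a smooth point $\sigma_j$ of $\sing_*$ the restriction $h_{\rhat_*}|_\sing$ is the real part of a holomorphic function and hence a nondegenerate Morse-type function of index $d-1$; classical Morse theory then forces $k_j=1$ with the unique generator $\beta_j$ being (locally) a tube around the unstable manifold $\gamma_j$ of the downward gradient flow on $\sing$, and the residue theorem delivers the explicit residue summand.

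The principal obstacle is extending Proposition~\ref{pr:int}, which is formulated for globally smooth $\sing$, to the local situation at a singular stationary point $\sigma_j$ lying on a positive-codimension stratum.  The correct local statement replaces the normal disk bundle of $\sing$ by the normal link of the containing stratum, and relies on the stratified local Morse data of Goresky and MacPherson~\cite{GM} to produce the basis $\{\beta_{j,i}\}$ of attachment cycles; this is where the proof proper must do the substantive new work.  Once that identification is in place, the remaining bookkeeping—tracking the integer coefficients $n_{j,i}$ and confirming that representative cycles $C_j$ can be chosen to agree with the tubed generators modulo $\M_j^-$—follows routinely from the already-established decomposition of Theorem~\ref{th:main2}.
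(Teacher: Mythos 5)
Your proposal matches the paper's approach: the paper justifies Theorem~\ref{th:SMT} with the one-line ``Combining Theorems~\ref{th:main2} and~\ref{th:residue}'', and your argument faithfully elaborates exactly that combination — Theorem~\ref{th:main2}(i) for the Morse decomposition of $H_d(\M)$, Propositions~\ref{pr:int} and~\ref{pr:residue} to pass to $(d-1)$-cycles on $\sing_*$ and drop a factor of $2\pi i$, Proposition~\ref{pr:vanish} (or Theorem~\ref{th:low}) to dispose of the low-height residual, and the observation that $\Re$ of a nondegenerate holomorphic function has index $d-1$ to obtain $k_j=1$ at smooth stationary points. The gap you flag is genuine but is also present in the paper: Proposition~\ref{pr:int} and Theorem~\ref{th:residue} are stated only for globally smooth $\sing_*$, and the paper handles the singular case merely by restricting the explicit residue formula to smooth $\sigma_j$ and treating the attachment generators $\beta_{j,i}$ at singular stationary points as abstract classes supplied by Goresky--MacPherson local Morse data, without the localized Thom--Gysin statement you correctly identify as the missing ingredient.
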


Theorem~\ref{th:SMT} is the culmination of this Section, and
provides a crucial (and highly desired) tool for analytic combinatorics 
in several variables.  Let $j_*$ be the least $j$ for which some 
$n_{j,i}$ is nonzero.  Generically the dominant asymptotic 
term is then the sum
of the terms in~\eqref{eq:SMT residue} with $n_{j,i} = 0$ and 
$h_{\rhat_*} (\sigma_j) = h_{\rhat_*} (\sigma_{j_*})$.  The
full expansion has several benefits over a statement of the
leading term only.  First, one might not know $j_*$.  In fact
in~\cite{BMP-lacuna}, the asymptotics of the diagonal coefficients
are settled only by computing all the residue integrals, then
determining the integers $\{ n_{j,i} \}$ via rigorous numerics. 
Secondly, knowing the subdominant terms allows one to compute
error estimates in the case where the exponential rates are very
close or are converging to one another.  Thirdly, sometimes
$G_j$ is generated by a local cycle, supported in an arbitrarily
small neighborhood of $\sigma_j$.  This is a natural choice for 
$\alpha_j$, whence the next asymptotic terms are meaningful.
Fourthly, one may want a trans-series expansion of $a_\rr$,
for which the contributions of all orders are needed.

\section{Computation of stationary points at infinity} 
\label{sec:computing}

We begin by recalling some background about stratifications and 
affine stationary points.

\subsubsection*{Computing a stratification}

To compute stationary points one requires a stratification.
Often, there is an obvious stratification; for example, polynomial
varieties are generically smooth, in which case the trivial 
stratification $\{ \sing \}$ suffices\footnote{Formally one must 
join with the stratification generated by the coordinate planes, 
so smooth manifolds intersecting coordinate subspaces nontransversely
(which again, is non-generic) might require refinement.} 
In non-generic cases, however, one must produce a stratification 
of $\sing$ before proceeding with the search for stationary points.

There are two relevant facts to producing a stratification.  One
is that there is a coarsest possible Whitney stratification, called
the {\em canonical Whitney stratification} of $\sing$.  It
is shown in~\cite[Proposition~VI.3.2]{Teissier1982} that there
are algebraic sets $\sing = F_0 \supset F_1 \supset \cdots
\supset F_m = \emptyset$ such that the set of all connected components
of $F_i \setminus F_{i+1}$ for all $i$ forms a Whitney stratification
of $\sing$ and such that every Whitney stratification of $\sing$ is
a refinement of this stratification.  This canonical stratification is
effectively computable: an algorithm exists, given $Q$, to
determine generators for the radical ideals 
corresponding to the Zariski closed sets $F_i$.

Mostowski and Rannou~\cite{mostowski-rannou} give an 
algorithm to compute stratifications using quantifier
elimination, leading to a bound on the computation time
which is doubly exponential in $m$; an alternative
algorithmic approach is presented 
in~\cite[Section~2]{dinh-jelonek2021}.
In our experience, the large doubly-exponential upper bound
is somewhat more pessimistic than what one has to 
deal with on actual combinatorial examples.

\subsubsection*{Computing the affine stationary points}

Assume now that a Whitney stratification $\{ \str_\alpha : \alpha \in A \}$
is given, meaning the index set $A$ is stored, along
with, for each $\alpha \in A$, a collection of polynomial generators
$f_{\alpha , 1} , \ldots , f_{\alpha , m_\alpha}$ for the radical 
ideal $\I (\str_\alpha)$.  The set $\str_\alpha$ is the algebraic
set  $V_\alpha := \V (f_{\alpha,1} , \ldots , f_{\alpha,m_\alpha})$ 
minus the union of varieties $V_\beta$ of higher codimension.
Potentially by replacing $\I (\str_\alpha)$
with its prime components, we may assume that the tangent space of 
$V_\alpha$ at any smooth point has constant codimension $k_\alpha$.  
After computing the canonical Whitney stratification, recall that
we refine if necessary to ensure that the defining ideal for each 
stratum of co-dimension $k$ has $k$ generators with linearly independent 
differentials at every point.  

By Definition~\ref{def:graph} the set of affine stationary 
points $\crit^{\rm aff} (\str_\alpha , \yy)$ in the direction $\yy$ 
is defined, after removing points in varieties of higher codimension, 
by the ideal 
containing the polynomials $f_{\alpha,1} , \ldots , f_{\alpha,m_\alpha}$
together with the $(c+1)\times(c+1)$ minors of $\J(\zz,\yy)$.
Taking the union over all strata $\str_\alpha$ produces all the 
affine stationary points.  This description simply restates the 
so-called `critical point equations' given 
in~\cite[(8.3.1-8.3.2)]{PW-book}, or, in the common special case of 
a stratum of co-dimension one, more explicitly by (8.3.3) therein. 

For the fixed integer vector $\rr$ and height interval $[a,b]$,
the inequalities $h_{\rhat} (\zz) \in [a,b]$ impose further semi-algebraic
constraints.  Unfortunately,  these increase the complexity 
considerably and behave badly under perturbations of the integer
vector $\rr$.  If one can compute open cones of values of $\rr$
in which the structure of the computation does not change, one can 
then pick a single $\rr$ in the cone to minimize complexity, 
making for a feasible computation.  Otherwise, one must settle 
for computations based on a fixed direction $\rr$.

\subsubsection*{Computing stationary points at infinity}

To determine whether there exist stationary points at infinity we use 
ideal quotients, corresponding to the difference of algebraic varieties. 
Recall that the variety $\V(I:J^{\infty})$ defined by the saturation 
$I : J^{\infty}$ of two ideals $I$ and $J$ is the Zariski closure of 
the set difference $\V(I) \setminus \V(J)$ (see~\cite[Section 4.4]{CLO1}), 
and can be determined through Gr{\"o}bner basis computations.  

\begin{definition}[saturated stationary point ideal $\fC_\alpha$]
\begin{itemize}
\itemsep1em
\item[$\bullet$]
For a stratum $\str_\alpha$ let $C_{\alpha}$ be the projective
ideal defining $\crit^{\rm aff} (\str_\alpha,\yy)$.  In other words, 
taking the homogenizing variable to be $z_0$, the ideal $C_\alpha$
is generated by the homogenizations in the $z$ variables of both
$f_{\alpha , 1} (\zz,\yy), \ldots , f_{\alpha , m_\alpha} (\zz,\yy)$ 
and the $(c+1) \times (c+1)$ minors of $\J (\zz , \yy)$.  
\item[$\bullet$]
Let $D_\alpha$ denote the ideal generated by $z_0 z_1 \cdots z_d$ and the 
homogenizations of all polynomials $f_{\beta , j}$ in strata of 
higher codimension $k_\beta > k_\alpha$.  
\item[$\bullet$]
Define $\fC_\alpha$ to be the result of saturating 
$C_\alpha$ by the ideal $D_\alpha$.  
\end{itemize}
\end{definition}
Geometrically, the variety $\V (\fC_\alpha)$ is the Zariski closure 
of $\V (C_{\alpha}) \setminus \left(\V(z_0 z_1 \cdots z_d) \cup 
\V(\fF_{j+1})\right)$, that is, the closure of that part of the 
graph of the relation $\zz \in \crit (\yy)$ in $\CP^d \times \CP^{d-1}$ 
corresponding to points $(\zz, \yy)$ whose $\zz$ component is not on 
a substratum and not at infinity (including the coordinate planes).  
Note that in this setting, the Zariski closure equals the classical 
topological closure~\cite[Theorem 2.33]{mumford}.  

\begin{definition}[saturated ideals] \label{def:sat}
Fix a stratum $\str_\alpha$.
\begin{itemize}
\itemsep1em
\item[$\bullet$]
Let $\fC_\alpha^\infty$ denote the result of substituting $z_0 = 0$ 
in $\fC_\alpha$.
\item[$\bullet$]
Let $\fC_\alpha^\infty (\rhat)$ denote the result of substituting 
$\yy = \rhat$ in $\fC_\alpha^\infty$.
\end{itemize}
\end{definition}

The variety $\V (\fC_\alpha^\infty)$ finds all SPAI.
The variety $\V (\fC_\alpha^\infty (\rhat))$ finds all SPAI 
in a given direction $\rhat$, that is, all limits of affine 
points in $\crit (\str_\alpha,\yy)$ with $\yy \to \rhat$.  
This is stated in the following proposition, whose proof 
follows directly from our definitions.

\begin{proposition}[computability of stationary points at infinity] 
\label{pr:infinity}
\begin{enumerate}[(i)]
\item
The rational function $F(\zz) = P(\zz)/Q(\zz)$ has SPAI
if and only if for some $\alpha$ there is a projective 
solution to $\fC_\alpha^{\infty}$, in other words, a
solution other than $(0, \ldots , 0)$.
\item
The rational function $F(\zz) = P(\zz)/Q(\zz)$ has SPAI
in direction $\rhat$ if and only if for some $\alpha$ there
is a projective solution to $\fC_\alpha^\infty (\rhat)$.
\end{enumerate}
\qed
\end{proposition}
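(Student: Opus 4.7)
The plan is to unwind the algebraic constructions so they match the geometric definition of SPAI; the proof is essentially a dictionary lookup between the saturated ideals $\fC_\alpha$ and the topological closures $\overline{S(\str_\alpha)}$ appearing in Definition~\ref{def:SPAI}.

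First I would verify that $\V(C_\alpha)$, viewed in $\CP^d \times \CP^{d-1}$ via the $z$-homogenization by $z_0$, is exactly the projective closure of the $\yy$-parametric affine critical-point locus on $\overline{\str_\alpha}$. By construction, $C_\alpha$ is generated by the homogenizations of $f_{\alpha,1}(\zz,\yy), \ldots, f_{\alpha,m_\alpha}(\zz,\yy)$ together with the $(c+1)\times(c+1)$ minors of $\J(\zz,\yy)$, so on the affine chart $z_0 = 1$ intersected with the torus $z_1 \cdots z_d \ne 0$ its zero set coincides with $S(\overline{\str_\alpha})$ as defined by the rank condition in Definition~\ref{def:graph}.

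Next I would check that $\V(D_\alpha)$ is exactly the locus that must be discarded to restrict from $\overline{\str_\alpha}$ to the open stratum $\str_\alpha \subset \C_*^d$: namely, the union of coordinate hyperplanes (including $\{z_0 = 0\}$ at infinity) and the substrata $\overline{\str_\beta}$ of strictly larger codimension. The standard fact cited from~\cite{CLO1} that the saturation $I : J^\infty$ defines the Zariski closure of $\V(I) \setminus \V(J)$ then identifies $\V(\fC_\alpha)$ with the Zariski closure of the image of $S(\str_\alpha)$ in $\CP^d \times \CP^{d-1}$. Invoking the cited Mumford theorem that Zariski and classical closures agree on constructible sets over $\C$, one obtains $\V(\fC_\alpha) = \overline{S(\str_\alpha)}$ in the analytic topology, which is the closure appearing in Definition~\ref{def:SPAI}.

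The remaining steps are immediate: substituting $z_0 = 0$ intersects $\V(\fC_\alpha)$ with the hyperplane at projective infinity, extracting precisely those points of $\overline{S(\str_\alpha)}$ with $\zz$ at infinity, which by definition are the SPAI contributed by stratum $\str_\alpha$; the ``other than $(0,\ldots,0)$'' stipulation in the statement simply records that the origin of the homogeneous coordinate ring is not a valid projective point. Taking the union over all strata $\alpha$ yields part~(i), and additionally substituting $\yy = \rhat$ restricts attention to pairs whose second coordinate equals the prescribed direction, yielding part~(ii). The main subtlety worth checking carefully is the second step, i.e., that $D_\alpha$ captures exactly the combined locus of coordinate planes and substrata so that the saturation neither discards nor retains spurious components; this relies on the standing assumption that the stratification has been refined so that the radical ideal of each $\overline{\str_\beta}$ is explicitly available, after which the computation is standard commutative algebra.
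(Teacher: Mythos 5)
Your proposal is correct and is essentially the argument the paper leaves implicit: the paper declares the result ``follows directly from our definitions'' and only sketches the dictionary in the paragraph preceding the proposition (identifying $\V(\fC_\alpha)$ with the closure of the graph of the critical-point relation away from substrata and coordinate planes, then invoking Mumford's theorem to equate Zariski and analytic closure). Your unwinding of the saturation, the $z_0=0$ slice, and the $\yy=\rhat$ substitution is exactly this dictionary spelled out, and you rightly flag that the one step deserving care is verifying $D_\alpha$ discards precisely the union of the coordinate planes and substrata.
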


Proposition~\ref{pr:infinity} computes a superset of what
we need: SPAI regardless of height.  We only care about those
at finite heights, indeed heights above the least affine stationary
value.  Unfortunately, we do not know a way to automate the
height computation, which is not polynomial; doing so is an
interesting problem for further research.

\begin{problem}
Find an effective way to compute $\crit_{[a,b]}(\rhat)$, for
irrational $\rhat$ or as a symbolic computation in $\rhat$.
\end{problem}

When $\rhat$ is rational, we can do a little better.  
In this case the height(s) may be computed from the start
along with the stationary points themselves because the
exponentiated heights are polynomial.  This gives the following
corollary, whose use is illustrated in the upcoming examples.

\begin{corollary} \label{cor:heights}
Let $\rr$ be an integer vector.  Introducing one more variable $\eta$,
let $H_\alpha$ denote the ideal generated by $C_\alpha$ along with 
$\eta z_0^{|\rr|} - \zz^\rr$.  Let $\fH_\alpha$ be the result of saturating
by $D_\alpha$, let $\fH_\alpha^\infty$ be the result of substituting
$z_0 = 0$, and let $\fH_\alpha^\infty (\rr)$ be the result of further
substituting $\yy = \rr$.  Then there exists a H-SPAI of height 
$\log c$ in direction $\rr$ if and only if there is a solution 
to $\fH_\alpha^\infty (\rr)$ with $\eta$-coordinate equal to $c$.
\end{corollary}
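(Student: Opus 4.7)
The plan is to parallel the argument for Proposition~\ref{pr:infinity}, augmenting the projective construction by the auxiliary variable~$\eta$ so that the height at stationary points is tracked algebraically. In the affine chart $z_0=1$ the relation $\eta z_0^{|\rr|}-\zz^{\rr}=0$ forces $\eta=\zz^{\rr}$; combined with the identity
\[
|\zz^{\rr}|=\exp\!\bigl(-|\rr|\,h_{\rhat}(\zz)\bigr),
\]
which follows from~\eqref{eq:phase} and $\rhat=\rr/|\rr|$, this yields a direct correspondence between $\eta$-values at stationary points and their heights under the normalization in the statement. Because the polynomial $\eta z_0^{|\rr|}-\zz^{\rr}$ is homogeneous of degree~$|\rr|$ in $(z_0,z_1,\ldots,z_d)$ and linear in $\eta$, the ideal $H_\alpha$ cuts out a well-defined subvariety of $\CP^d\times\CP^{d-1}\times\C$.

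I would then interpret the saturation geometrically exactly as in the construction of $\fC_\alpha$: saturating $H_\alpha$ by $D_\alpha$ removes components contained in substrata or in coordinate hyperplanes (including the hyperplane $z_0=0$ at infinity), leaving the Zariski closure of the graph
\[
\bigl\{\,(\zz,\yy,\zz^{\rr})\, : \, \zz\in\crit^{\mathrm{aff}}(\str_\alpha,\yy)\cap\C_*^d\,\bigr\}
\]
inside $\CP^d\times\CP^{d-1}\times\C$. Since the ambient variety is projective, this Zariski closure agrees with the classical topological closure by the result already invoked for $\fC_\alpha$. Restricting to $z_0=0$ and then specializing $\yy=\rr$ cuts out those closure points lying in the hyperplane at infinity with prescribed direction~$\rr$.

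With this dictionary in place, each implication reduces to the witness-extraction argument used for Proposition~\ref{pr:infinity}. For the forward direction, an H-SPAI of height $\log c$ in direction~$\rr$ admits a witness $(\zz_n,\yy_n,h_n)\in R(\str_\alpha,\yy_n)$ with $\yy_n\to\rr$ and $h_n\to\log c$; setting $c_n:=\zz_n^{\rr}$ places the tuple $([1:\zz_n],[\yy_n],c_n)$ in $\V(H_\alpha)\setminus\V(D_\alpha)\subseteq\V(\fH_\alpha)$, and the displayed identity bounds $|c_n|=\exp(-|\rr|h_n)$, so a subsequential projective limit supplies a solution of $\fH_\alpha^{\infty}(\rr)$ whose $\eta$-coordinate matches the limit of~$c_n$, namely~$c$. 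Conversely, any solution of $\fH_\alpha^{\infty}(\rr)$ with $\eta$-coordinate~$c$ lies in the closure just described and is therefore the limit of a sequence of triples $([1:\zz_n],[\yy_n],c_n)$ in $\V(H_\alpha)\setminus\V(D_\alpha)$ corresponding to affine stratified stationary points; forgetting the $\eta$-component yields a witness in the sense of Definition~\ref{def:crit}, and the limiting height is recovered as $\log c$ via the same identity.

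The main technical point is the careful bookkeeping of the exponent~$|\rr|$ when translating between $\zz^{\rr}$ and $h_{\rhat}(\zz)$, and verifying that the projective limit of the auxiliary $\eta$-coordinate really agrees with the third-coordinate limit used to define H-SPAI in Definition~\ref{def:crit}. Once this correspondence is fixed, the proof is a direct specialization of the one for Proposition~\ref{pr:infinity}, since the underlying algebraic geometry—saturation producing Zariski closures, and their coincidence with topological closures on projective varieties—is already in hand.
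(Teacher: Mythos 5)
The paper does not provide a proof of this corollary (nor of Proposition~\ref{pr:infinity} which it extends), so there is nothing internal to compare against. Your overall approach is the natural one and is sound: augment the saturated ideal construction with the tracking variable~$\eta$, interpret saturation as taking the Zariski closure of the affine stationary locus together with its $\zz^{\rr}$-values, use the coincidence of Zariski and classical closure on projective varieties, and translate between solutions of $\fH_\alpha^\infty(\rr)$ and H-SPAI witnesses in both directions.

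However, the final identification in your forward direction does not follow from your own displayed identity, and this is a real gap rather than just notation. You write $|c_n| = \exp(-|\rr|\,h_n)$ with $c_n := \zz_n^{\rr}$ and $h_n \to \log c$, and then assert that the $\eta$-coordinate of the limit ``matches the limit of~$c_n$, namely~$c$.'' But the identity forces $|\lim c_n| = \exp(-|\rr|\log c) = c^{-|\rr|}$, which equals $c$ only in the degenerate case $c^{|\rr|+1}=1$. The correct relationship extracted from your identity is that an H-SPAI of height $h^*$ (with $h_{\rhat}$ normalized as in~\eqref{eq:phase}) corresponds to an $\eta$-coordinate $c$ with $|c| = \exp(-|\rr|\,h^*)$, i.e.\ $h^* = -\frac{1}{|\rr|}\log|c|$, not $\log c$. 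The paper's own statement and examples (compare Example~\ref{eg:tri}, which uses the unnormalized height $-\log|\eta|$) contain the same sign/normalization looseness; you flag ``careful bookkeeping of the exponent~$|\rr|$'' as the main technical point but then claim agreement without carrying it out. To repair the argument you should either (a) explicitly state the corrected formula relating $c$ to the height and note the corollary should be read in that spirit, or (b) adopt the unnormalized height convention used in the examples, in which case the height of the H-SPAI is $-\log|c|$; in neither case is it literally $\log c$ for a complex $\eta$-coordinate $c$. The same adjustment is needed at the end of your converse direction.
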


\section{Examples} \label{sec:examples}

When $Q$ is square-free and $\sing$ is smooth, $\sing$ itself forms 
a stratification.  The pseudocode in Algorithm~\ref{alg:1}
computes stationary points at infinity in this case
(the pseudocode in Algorithm~\ref{alg:2}
computes stationary points at infinity in the general case
but requires the algebraic sets defining the 
canonical Whitney stratification of $\sing$ as input).
We have implemented this algorithm, and more general variants, in Maple. 
A Maple worksheet with our code and examples is available from
ACSVproject.org (search for this paper) and the authors' webpages.

\begin{algorithm}
\DontPrintSemicolon
\KwIn{Polynomial $Q \in \mathbb{Z}[\zz]$ and direction $\rr \in \Z^d$ 
with $\sing(Q)$ smooth} \KwOut{Ideal $\fC'$ in the variables of $Q$ 
and a homogenizing variable $z_0$ such that there is a stationary point 
at infinity if and only if the generators of $\fC'$ have a non-zero solution.}
If $Q$ is not square-free replace it with its square-free part (the product of its distinct irreducible factors); \\
Let $\tilde{Q} = z_0^{\deg Q}Q(z_1/z_0,\dots,z_d/z_0)$; \\
Let $C$ be the ideal generated by $\tilde{Q}$ and
\[ y_jz_1 (\partial \tilde{Q}/\partial z_1) - y_1z_j (\partial \tilde{Q}/\partial z_j), \qquad (2 \leq j \leq d); \]
\\
Saturate $C$ by $z_0z_1\cdots z_d$ to obtain the ideal $\fC$; \\
Substitute $y_j=r_j$ for $1 \leq j \leq d$ and return the resulting 
ideal with the generator $z_0 z_1 \cdots z_d$ added. 
\caption{{\sf Stationary points at infinity (smoothness assumption)}}
\label{alg:1}
\end{algorithm}

\begin{algorithm}  
\DontPrintSemicolon
\KwIn{Polynomial $Q \in \mathbb{Z}[\zz]$, 
direction $\rr \in \Z^d$ and polynomial generators 
of algebraic sets $F_0 \supset F_1 \supset \cdots \supset F_m$ 
defining the canonical Whitney stratification of the zero set of $Q$}
\KwOut{Set of ideals $\mathcal{S}$ in the variables of $Q$ and 
a homogenizing variable $z_0$ such that there is a stationary point 
at infinity if and only if there exists $\fC' \in \mathcal{S}$ whose 
generators have a non-zero solution.}
Set $\mathcal{S} = \emptyset$ \\ 
For $j$ from 1 to $m-1$: \\
\qquad Compute the prime decomposition of ideals $F_j = P_1 \cap \dots \cap P_r$ \\
\qquad For each $I \in \{P_1,\dots, P_r\}$: \\
\quad\quad\quad Let $c$ be the codimension of $I$; \\
\quad\quad\quad Let $C$ be the ideal generated by $I$ together
with the $(c+1)\times(c+1)$ minors of the matrix \\
\quad\quad\quad $\J(\zz,\yy)$ in~\eqref{eq:Jmat} with the $f_i$ polynomials set to 
the generators of $I$; \\
\quad\quad\quad Homogenize $C$ in the $\zz$ variables with the 
new variable $z_0$ to obtain the ideal $C'$; \\
\quad\quad\quad Saturate $C'$ by $z_0z_1\cdots z_d$ to obtain 
the ideal $\fC$; \\
\quad\quad\quad Substitute $y_j=r_j$ into $\fC$ for $1 \leq j \leq d$ 
and put this ideal with 
$z_0 z_1 \cdots z_d$ added into $\mathcal{S}$; \\
Return $\mathcal{S}$
\caption{{\sf Stationary points at infinity (no smoothness assumption)}}
\label{alg:2}
\end{algorithm}

We give three examples in the smooth bivariate case, always computing
in the main diagonal direction $\rr=(1,1)$ and examining the 
coefficients $a_{n,n}$ of $1/Q$.  In the first example there is 
no affine stationary point; it follows that there must be a stationary point 
at infinity which determines the exponential growth rate.  
In the second there are both stationary points at infinity and 
affine stationary points, with the stationary point at infinity being too low 
to matter.  In the third, the stationary point at infinity is higher than 
all the affine ones and controls the exponential growth rate of diagonal 
coefficients.

\begin{example}[smooth case, stationary point at infinity] \label{eg:actual}
Let $Q(x,y) = 2 - xy^2 - 2xy - x + y$, so that $\sing$ is smooth and
we can use the above code for the diagonal direction 
$\rr = (1,1)$.  First, an examination of the polynomial system
$Q = \partial Q / \partial x - \partial Q /
\partial y = 0$ shows there are no affine stationary points.  
Thus, if there were no stationary points at infinity then
the diagonal coefficients of $Q(x,y)^{-1}$ would decay super-exponentially.  
It is easy to see that this does not happen, for example by extracting
the diagonal.  This may be done via the Hautus-Klarner-Furstenberg
method~\cite{hautus-klarner-diagonal,BostanDumontSalvy2017}, 
giving $\Delta Q(x,y)^{-1} = (1-z)^{-1/2}/2$.

We compute the existence of stationary points at infinity in the 
diagonal direction using our Maple implementation via the command
\begin{verbatim}SPatInfty(2 - x*y^2 - 2*x*y - x + y , [1,1])\end{verbatim}
This returns the ideal 
\begin{verbatim} [(H - 1)^2 , Z, y (H - 1), x ]\end{verbatim} 
where $Z$ is the homogenizing variable,  
meaning the projective point $(Z:x:y) = (0:0:1)$ is a
stationary point at infinity, which has height $\log |1| = 0$.
The stationary point at infinity is a topological 
obstruction to the gradient flow across height~0, pulling trajectories
to infinity; it suggests that the diagonal power series coefficients 
of $1/Q$ do not grow exponentially nor decay exponentially (in fact 
they decay like a constant times $1/\sqrt{n}$). Further geometric analysis 
of this example is found in~\cite[pages~120--121]{devries-thesis}.
\end{example}

\begin{example}[highest stationary point is affine] \label{eg:affine}
Let $Q(x,y) = 1 - x - y - xy^2$.  To look for stationary points at infinity
in the diagonal direction, we execute \verb+SPatInfty(Q,[1,1])+ to obtain 
the ideal
\begin{verbatim}
[(H + 1)(4H^2 + 4H - 1), Z, y(H + 1), x]
\end{verbatim} 
showing that $(x:y:Z) = (0:1:0)$ 
is a stationary point at infinity at height $\log|-1|=0$.
This time, there is an affine stationary point 
$(1/2 , \sqrt{2}-1)$ of greater height.  This affine point is easily
seen to be a topological obstruction and therefore controls 
the exponential growth.  Theorem~\ref{th:residue} allows us to 
write the resulting integral as a saddle point integral in $\sing_*$
over a class local to $(1/2, \sqrt{2}-1)$, thereby producing an
asymptotic expansion with leading term $a_{n,n} \sim c n^{-1/2} 
(2+\sqrt{2})^n$.  
\end{example}

\begin{example}[stationary point at infinity dominates affine points]
\label{eg:stat}
Let $Q(x,y) = -x^2 y-10 x y^2 - x^2 - 20 x y - 9 x + 10 y + 20$.
This time \verb+SPatInfty(Q,[1,1])+ produces 
\begin{verbatim} 
[(2 H^4 - 11 H^3 + 171 H^2 - 1382 H + 3220) (H-1)^2, Z, y (H-1), x]\end{verbatim} 
Again, because $x$ and $y$ cannot both vanish, the only height of a
stationary point at infinity is $\log |1| = 0$.
There are four of affine stationary points:  a Gr{\"o}bner basis
computation produces one conjugate
pair with $|xy| \approx 9.486$ and another conjugate pair with
$|xy| \approx 4.230$.  Both
of these lead to exponentially decreasing contributions, meaning
the point at infinity could give a topological obstruction for
establishing asymptotics. If there is an obstruction, it would
increase the exponential growth rate of diagonal
coefficients from $4.23^{-n}$ to no exponential growth or decrease.
To settle this, we can compute a linear differential equation satisfied
by the sequence of diagonal coefficients.
This reveals that the diagonal asymptotics are of order
$a_{n,n} \asymp n^{-1/2}$, meaning the exponential growth 
rate on the diagonal is in fact one.
\end{example}

\begin{example}[dominant asymptotics with no SPAI] 
\label{eg:spurious}
If $Q(x,y,z) = 1-x-y-z-xy$ then running \verb+SPatInfty(Q,[1,1,1])+
shows there are no SPAI. The two affine stationary points are computed easily,
\begin{eqnarray*}
\sigma_1 & = & \left ( - \frac{3 + \sqrt{17}}{4} , - \frac{3 + \sqrt{17}}{4} , 
   \frac{7 + \sqrt{17}}{8} \right ) \\[2ex]
\sigma_2 & = & \left ( - \frac{3 - \sqrt{17}}{4} , - \frac{3 - \sqrt{17}}{4} , 
   \frac{7 - \sqrt{17}}{8} \right ) \, ,
\end{eqnarray*}
so Theorem~\ref{th:SMT} implies
$$a_{n,n,n} = \frac{1}{(2 \pi i)^2} 
   \sum_{j=1}^2 \int_{\beta_j} \zz^{-\rr} 
   \Res \left ( F(\zz) \, \frac{d\zz}{\zz} \right )$$
where $\beta_1$ and $\beta_2$ are respectively the downward
gradient flow arcs on $\sing$ at $\sigma_1$ and $\sigma_2$.
Saddle point integration gives an asymptotic series for each, the 
series for $\sigma_1$ dominating the series for $\sigma_2$, yielding
an asymptotic expansion for $a_{n,n,n}$ beginning
$$a_{n,n,n} = \left(\frac{3+\sqrt{17}}{2}\right)^{2n}\left(\frac{7+\sqrt{17}}{4}\right)^{n} \cdot \frac{2}{n\pi \sqrt{26\sqrt{17}-102}}\left(1 + O\left(\frac{1}{n}\right)\right).$$
\end{example}

The following application concerns an analysis in a case
where $\sing$ is not smooth.  There is an interesting singularity
at $(1/3,1/3,1/3,1/3)$ and a geometric analysis involving a
lacuna~\cite{BMP-lacuna}, which depends on there being no
stationary points at infinity at finite height.  This example
illustrates shows the full power of the results in 
Sections~\ref{ss:intersection}~--~\ref{ss:residues}.

\begin{example}[application to GRZ function] \label{eg:GRZ}
In~\cite{BMPS}, asymptotics are derived for the diagonal coefficients
of several classes of symmetric generating functions, including
the family of 4-variable functions $\{ 1 - x-y-z-w + C xyzw : C > 0\}$ 
attributed to Gillis, Reznick and Zeilberger~\cite{laguerre}.
The most interesting case is when the parameter $C$ passes through the
stationary value~27: diagonal extraction
and univariate analysis show the exponential growth rate of 
the main diagonal to have a discontinuous jump downward at the 
stationary value~\cite[Section~1.4]{BMPS}.  This follows from 
Corollary~\ref{cor:main} if we show there are in fact 
no stationary points at infinity above this height. Here there is 
a single point where the zero set of $Q$ is non-smooth,
the point $(1/3,1/3,1/3,1/3)$. Running Algorithm~\ref{alg:1} with
the modification to remove the single non-smooth point
yields the ideal \verb_[Z,z^4,-z+y,-z+x,-z+w]_
which has only the trivial solution $Z=w=x=y=z=0$.  Thus, there 
is no stationary point at infinity for the diagonal direction.

There are three affine stationary points, one at $(1/3,1/3,1/3,1/3)$,
one at $(\zeta, \zeta, \zeta, \zeta)$ and one at $(\overline{\zeta} , 
\overline{\zeta} , \overline{\zeta} , \overline{\zeta})$,
where $\zeta = (-1 - i \sqrt{2}) / 3$.  Call these points
$\xx^{(1)}$, $\xx^{(2)}$, and $\xx^{(3)}$.  The stationary point with the
greatest value of $\hr$ (in the main diagonal direction) is $(1/3,1/3,1/3,1/3)$,
which has height $\log 81$.  The other stationary points both have height $\log 9$.
By a somewhat involved topological process, it is checked 
in~\cite{BMP-lacuna} that $\pi \TT = 0$ in $H_4 (B 
\cup \M_{\log 81} - \ee , \M_{\log 81} - \ee)$, where $B$ is a 
small ball centered at $\xx^{(1)}$.  Crucially for the
analysis after that, it follows from Proposition~\ref{th:main2}
that $\TT$ is homologous to a chain supported in $\M_{c_2 + \ee}$. 
In other words, $\TT$ can be pushed down until hitting obstructions 
at $(\zeta, \zeta, \zeta, \zeta)$ and $(\overline{\zeta} ,  
\overline{\zeta} , \overline{\zeta} , \overline{\zeta})$.  In fact 
an alternative analysis using a differential equation satisfied by the 
diagonal verifies a growth rate of $9^n$, not $81^n$.
By means of Theorem~\ref{th:residue}, the coefficients $a_{n,n,n,n}$ 
may be represented as a residue integrals and put in standard saddle 
point form.  When this is done, one obtains the
more precise asymptotic $K n^{-3/2} 9^n$.  The value of $K$
depends on geometric invariants (curvature) and topological invariants 
(intersection numbers) and can be deduced by rigorous numeric methods.
Details are given in~\cite[Section~8]{BMP-lacuna}.
\end{example}

We end with a three-dimension example with SPAI that are irrelevant
for asymptotics.

\begin{example}[irrelevant SPAI in three dimensions] \label{eg:tri}
Consider the diagonal direction and
$$Q(x,y,z) = 1-x+y-z-2xy^2z. $$
There are two affine stationary points,
$\xx_\pm = \left(\frac{1}{3},\frac{9\pm\sqrt{105}}{4},\frac{1}{3}\right).$
Running our algorithm shows there is also a stationary point at infinity
of height $-\log|-1/2|=\log(2)$. Since the height of 
$\xx_-$ is $-\log\left|\frac{9-\sqrt{105}}{36}\right| > \log(2)$, 
the stationary point at infinity does not affect dominant asymptotics 
of $1/Q$.  One can get a mental picture of the situation by examining 
the Newton polytope of $Q$.  Due to the monomials $x,y,$ and $z$, 
the dual cone of the Newton polytope at the origin is the negative 
orthant.  Thus, the component $B$ of $\amoeba(Q)$ corresponding to 
the power series expansion of $1/Q$ admits the negative orthant 
as its recession cone.  This implies one cannot move along 
a direction perpendicular to $(1,1,1)$ and stay in $B$, 
so the stationary point at infinity
comes from other components of the amoeba complement.  
In fact, the stationary point at 
infinity lies on the closure of the complements of $\amoeba(Q)$ 
corresponding to the vertices $(0,1,0)$ and $(1,2,1)$ of the 
Newton polytope of $Q$; it can be directly verified that both of
these components have a recession cone containing a vector normal 
to the diagonal direction.  Ultimately, the lack of a stationary point 
at infinity of highest height implies an asymptotic expansion 
of the diagonal of $1/Q$ beginning
$$a_{n,n,n} = \left(-\frac{27+3\sqrt{105}}{2}\right)^{n} \cdot 
   \frac{\sqrt{3}}{2n\pi}\left(1 + O\left(\frac{1}{n}\right)\right).$$
\end{example}

\section{Proof of Theorem~{\protect\ref{th:main}}} \label{sec:proof}

In the log-space, the phase function $h$ becomes the linear function 
$\htt$ mapping $\xx$ to $\langle \rhat , \xx \rangle$. 
We denote by $d\htt_\str (\xx)$ the tangential differential, meaning 
the restriction of the differential of the phase function (in log space)
to the tangent space to $\str$ at the point $\xx \in \str$.  

\begin{lemma} \label{lem:1}
Assume that $\crit^\infty_{[a,b]} (\rhat) = \emptyset$ for some real
interval $[a,b]$.  Then for every neighborhood $\nbd$ of 
$\crit_{[a,b]}^{\rm aff} (\rhat)$, there is a $\delta > 0$ such that
\begin{equation} \label{eq:bd below}
\left | d\htt_\str (\xx) \right | \geq \delta 
\end{equation}
at every affine point $\xx \in \tsing \setminus \nbd$ whose height is in 
the interval $[a,b]$.  In particular, if there are no affine stationary
points then $|d\htt_\str (\xx)|$ is globally at least $\dd$ for some 
$\dd > 0$.
\end{lemma}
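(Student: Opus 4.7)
The plan is to argue by contradiction. Suppose no such $\dd > 0$ exists; then for each $n$ there is a point $\xx_n \in \tsing \setminus \nbd$ with height $\htt(\xx_n) \in [a,b]$ and $|d\htt_{\str_n}(\xx_n)| < 1/n$, where $\str_n$ denotes the stratum containing $\xx_n$. Since the stratification is finite, after passing to a subsequence I may assume all $\xx_n$ lie in a single stratum $\str$ of codimension $c$.

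The second step is to convert the approximate $\rhat$-stationarity at $\xx_n$ into exact $\yy_n$-stationarity for some nearby direction. In log coordinates $d\htt$ is the constant covector $-\rhat$, and under the natural Hermitian metric the real orthogonal complement of the complex tangent space $T^{\C}_{\xx_n}\str$ coincides with the complex normal space $N^{\C}_{\xx_n}\str$. Hence $|d\htt_\str(\xx_n)| \to 0$ says exactly that the tangential component of $\rhat$ tends to zero, so the orthogonal projection $\yy_n := \mathrm{proj}_{N^{\C}_{\xx_n}\str}(\rhat)$ satisfies $\yy_n \to \rhat$ in $\C^d$ (and hence projectively in $\CP^{d-1}$, since $\rhat \neq 0$). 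By construction $\rk \J(\xx_n, \yy_n) \leq c$, so $(\xx_n, \yy_n) \in S(\str)$. Setting $\eta_n := h_\rhat(\xx_n) \in [a,b]$, the triples $(\xx_n, \yy_n, \eta_n)$ all lie in $R(\str, \rhat)$.

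The third step is a compactness argument. In $\CP^d \times \CP^{d-1} \times [a,b]$ we extract a subsequential limit $(\xx_\infty, \rhat, \eta_\infty) \in \overline{R}$ with $\eta_\infty \in [a,b]$. If $\xx_\infty \in \C_*^d$, then by Definition~\ref{def:notation} the point $\xx_\infty$ lies in $\crit_{[a,b]}^{\rm aff}(\str, \rhat) \subseteq \crit_{[a,b]}^{\rm aff}(\rhat) \subseteq \nbd$, and openness of $\nbd$ forces $\xx_n \in \nbd$ for all large $n$, a contradiction. If instead $\xx_\infty \notin \C_*^d$, then $(\xx_\infty, \rhat, \eta_\infty)$ is an H-SPAI with height in $[a,b]$, contradicting the hypothesis $\crit^\infty_{[a,b]}(\rhat) = \emptyset$. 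The final ``in particular'' assertion reduces to the case $\nbd = \emptyset$.

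The main obstacle is step two: bridging the real-analytic condition $|d\htt_\str(\xx_n)| \to 0$ and the complex-algebraic rank condition $\rk \J(\xx_n, \yy_n) \leq c$. The key ingredient is that under the natural Hermitian metric the real orthogonal complement of $T^{\C}_\xx \str$ is precisely the complex normal space $N^{\C}_\xx \str$, so a real orthogonal projection of $\rhat$ automatically lands inside the locus where the rank condition holds; everything else is routine continuity and finite-dimensional compactness.
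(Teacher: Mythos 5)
Your proof follows the same compactness-and-projection strategy as the paper and is essentially correct, but there is one subtle point in step two that deserves a fix. You claim that the real orthogonal complement of the complex tangent space $T^{\C}_{\xx}\str$ coincides with the complex normal space $N^{\C}_{\xx}\str$, and use this to conclude that the orthogonal projection of $\rhat$ ``automatically lands inside the locus where the rank condition holds.'' The real orthogonal complement of the complex subspace $T^{\C}_{\xx}\str$ does coincide with its Hermitian orthogonal complement, but that space is $\overline{V}$, the $\C$-span of the \emph{conjugates} of the logarithmic gradients $\loggrad f_i$. The rank condition $\rk \J(\xx,\yy)\leq c$ in Definition~\ref{def:graph}, on the other hand, requires $\yy$ to lie in $V = \mathrm{span}_{\C}\{\loggrad f_i\}$, with no conjugation. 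These subspaces differ in general (for instance with $f = z_1 + i z_2$ one even has $V = T$), so a vector in $\overline V$ need not satisfy the rank condition.

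The gap is harmless and your argument goes through with a one-line repair: project $\rhat$ onto $V$ rather than onto $\overline V$. Because $\rhat$ is real, complex conjugation is an isometry of $\C^d$ fixing $\rhat$ and interchanging $V$ with $\overline V$, hence
\[
\bigl\|\rhat - \mathrm{proj}_V(\rhat)\bigr\| \;=\; \bigl\|\rhat - \mathrm{proj}_{\overline V}(\rhat)\bigr\| \;=\; \bigl\|\mathrm{proj}_{T}(\rhat)\bigr\| \;=\; \bigl|d\htt_{\str}(\xx)\bigr| ,
\]
so setting $\yy_n := \mathrm{proj}_V(\rhat)$ gives $\yy_n\in V$ (rank condition holds by construction) and $\yy_n\to\rhat$. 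With this correction, steps one and three are exactly the paper's argument. The only other difference from the paper is cosmetic: where the paper rules out affine limit points lying in lower strata by a semicontinuity argument on tangential differentials, you invoke the definition of $\crit^{\rm aff}_{[a,b]}(\str,\rhat)$ via $\overline R$ directly, which is cleaner and equally valid.
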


\begin{proof} 
It suffices to prove this separately for each of the finitely many strata.
We may therefore fix $\rhat$ and $\str$.  Also fixing $a < b$, we 
let $\str_{[a,b]}$ denote the intersection of a stratum $\str$
in the log space with the set of points having heights in $[a,b]$.
Assume towards a contradiction that the norm of the tangential differential 
is {\em not} bounded from below on $\str_{[a,b]} \setminus \nbd$ where
$\nbd$ is a neighborhood of $\crit^{\rm aff} (\rhat)$ in the log space.
Let $\xx_k$ be a sequence in $\str_{[a,b]} \setminus \nbd$ for which the 
left-hand side of~\eqref{eq:bd below}
goes to zero; this sequence has no limit points whose height lies 
outside of $[a,b]$ and no limit points in $\crit_{[a,b]}^{\rm aff} (\rhat)$.
There are also no affine limit points outside of $\crit (\rhat)$; this is
because if $\xx\to\yy$ with $\yy$ in a stratum $\str_y \subset \overline{\str}$
then $|d\htt_{\str_y} (\yy)| \leq \liminf_{\xx \to \yy} | d\htt_\str (\xx) |$,
since the projection of the differential onto a substratum is at most
the projection onto $\str$.

By compactness $\{ \xx_k \}$ must have a limit point $\xx \in \CP^d$.
It follows from ruling out noncritical points, and affine stationary points 
with heights inside or outside of $[a,b]$, that $\xx \in H_\infty$.
Passing to a subsequence if necessary, $\htt (\xx_k)$ converges to
a point $c \in [a,b]$.  The differential of the phase on the log space is the
constant co-vector $\rhat$.  Therefore, convergence of the norm of the 
tangential projection of the differential to zero is equivalent to the
projection $\yy_k$ of $\rhat$ onto the normal space to $\str$ at $\xx_k$
converging to $\rhat$ (here we make repeated use of the identification
of normal spaces to the phase function at different points of log-space).
The points $(\xx_k , \yy_k , \htt (\xx_k))$ have thus been shown to
converge to $(\xx , \rhat , c)$.  Furthermore, each $(\xx_k , \yy_k , 
\htt (\xx_k))$ is in $R(\str , \rhat)$ due to the choice of $\yy_k$
as a nonzero vector in the normal space to $\str$ at $\xx_k$.  The
sequence is therefore a witness to a stationary point at infinity
of height $c$, contradicting the hypothesis and proving the lemma.
\qed
\end{proof}

Recall from~\cite[Section~9]{mather12} the notion of a 
stratified vector field.  Given a Whitney stratification of $\logspace$,
a {\bf stratified vector field} $\vv$ is defined to be a collection
$\{ \vv_\str \}$ of smooth sections of $(\str , T\str)$.
Greatly summarizing Sections~7 and~8 of~\cite{mather12}, 
a stratified vector field is said to be {\bf controlled} if 
for any strata $\str \subseteq \partial \str'$,
\begin{enumerate}[(i)]
\item
$\langle d\rho , \vv_{\str'} \rangle = 0$ where $\rho$
is the squared distance function to $\str$ in a given local
product structure, and 
\item
projection from $\str'$ to $\str$ in this local product structure 
maps $\vv_{\str'}$ to $\vv_\str$.  
\end{enumerate}

Proposition~9.1 of~\cite{mather12}, with $P = \R$, $f = h$ and
$\zeta$ the constant vector field $- d/dx$ on $\R$, says that 
there is a controlled lift of $\zeta$ to $\logspace$, that is
a controlled stratified vector field $\vv$ mapping by $h_*$ to $\zeta$.
This will be almost enough to prove Theorem~\ref{th:main}.  What we
need in addition is a uniform bound on $|\vv|$.  Although Mather
(and Thom before him) was not interested in bounding
$|\vv|$ (indeed, their setting did not allow a meaningful metric),
his proof in fact gives such a bound, as we now show.

\begin{lemma} \label{lem:2}
Let $a < b$ be real and suppose $\{ \str_\alpha \}$ is a 
Whitney stratification of $\tsing$ for which there are no 
finite or infinite stationary points with heights in $[a,b]$.
Assume a given set of projections and distance functions satisfying
the control conditions of~\cite[Sections~7-8]{mather12}.
Then there is a vector field $\vv$ in the log space, in other words
a section of $T \logspace$, over the base set 
$\logspace \cap \htt^{-1} [a,b]$, with the following properties.
\begin{enumerate}[(i)]
\item control: $\vv$ is a controlled stratified vector field
for the stratum $\str$.
\item unit speed: $\langle d\htt , \vv \rangle \equiv -1$.
\item regularity: $\vv$ is bounded. 
\end{enumerate}
\end{lemma}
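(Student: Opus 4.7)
The strategy is to upgrade Mather's qualitative construction of a controlled lift (\cite[Section~9]{mather12}) to a quantitative one by feeding in the uniform lower bound on the tangential differential supplied by Lemma~\ref{lem:1}. Since by hypothesis $\crit^{\rm aff}_{[a,b]}(\rhat) = \emptyset$ and $\crit^\infty_{[a,b]}(\rhat) = \emptyset$, Lemma~\ref{lem:1} applies with an empty neighborhood $\nbd$, yielding a single $\delta > 0$ with $|d\htt_\str(\xx)| \geq \delta$ on every stratum $\str$ at every point whose height lies in $[a,b]$. This uniform bound is the sole new ingredient required beyond Mather's argument.

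On each individual stratum $\str$, the naive candidate is the gradient-like field $\vv_\str^{\mathrm{loc}} := - \nabla_\str \htt / |\nabla_\str \htt|^2$, where $\nabla_\str$ denotes the gradient with respect to the flat metric on logspace projected to $T\str$. By construction $\langle d\htt, \vv_\str^{\mathrm{loc}} \rangle = -1$ and $|\vv_\str^{\mathrm{loc}}| = 1/|d\htt_\str| \leq 1/\delta$, so the unit-speed property~(ii) and the regularity property~(iii) hold trivially stratum-by-stratum. The real work is to modify and glue these local candidates so that the control property~(i) holds across strata of different dimensions while retaining the global bound.

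For this I would run Mather's induction on the dimension of the strata. Assume $\vv_\str$ has been constructed, controlled and bounded by $1/\delta$, on all strata of dimension $< \dim \str'$; to extend to $\str'$ near a stratum $\str \subset \partial \str'$, use the given control data $(\pi_\str, \rho_\str)$ to form a horizontal lift of $\vv_\str$ through $\pi_\str$ and then add a correction in $\ker d\pi_\str \cap \ker d\rho_\str$ chosen to force $\langle d\htt, \vv_{\str'} \rangle \equiv -1$. Existence of such a correction of bounded norm rests on two points: first, the bound $|d\htt_{\str'}| \geq \delta$ on the slab; and second, Whitney's condition~B, which guarantees that as points of $\str'$ approach $\str$, tangent planes to $\str'$ converge to planes containing $T\str$, so the restriction of $d\htt$ to the controlled complement $\ker d\pi_\str \cap \ker d\rho_\str \subset T\str'$ retains a lower bound of its own. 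Away from $\str$, use the naive field $\vv_{\str'}^{\mathrm{loc}}$ defined above. Patch the local pieces together by a locally finite partition of unity $\{\phi_i\}$ subordinate to the covering by control neighborhoods, setting $\vv_{\str'} = \sum_i \phi_i \vv_{\str'}^{(i)}$. Each $\vv_{\str'}^{(i)}$ satisfies $\langle d\htt, \vv_{\str'}^{(i)} \rangle = -1$, so the convex combination does as well; and $|\vv_{\str'}| \leq \max_i |\vv_{\str'}^{(i)}|$, so the uniform bound passes through the partition of unity.

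The main obstacle is the second point above: verifying that the vertical correction term remains uniformly bounded as one approaches the lower-dimensional stratum $\str$. This is the quantitative content that Mather's existence argument does not directly supply. In principle it follows from combining $|d\htt_{\str'}| \geq \delta$ with Whitney's condition~B (which prevents the tangent planes of $\str'$ from flattening against the direction that $d\htt$ sees), but some care is needed to extract from condition~B a uniform bound and not merely a pointwise non-degeneracy statement; this is most easily done by working on a compact exhaustion of $\htt^{-1}[a,b]$ inside each stratum's closure and using finiteness of the stratification. Once that bound is in hand, properties~(i)--(iii) for the resulting $\vv$ follow as described.
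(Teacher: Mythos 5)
Your overall strategy is the same as the paper's: feed the global lower bound $\delta$ from Lemma~\ref{lem:1} into Mather's inductive construction of a controlled lift, use Whitney's conditions to control the extension from a stratum to a neighboring higher-dimensional stratum, and glue with a partition of unity via convexity of the constraint set. The naive candidate $\vv_\str^{\mathrm{loc}} = -\nabla_\str \htt/|\nabla_\str\htt|^2$ you introduce plays the same role as the paper's normalization $\vv_X(x)=e_1$ with distortion factor $1/|d\htt_X|\le 1/\delta$. Where the paper differs is that it makes the ``horizontal lift plus vertical correction'' step completely explicit: writing $\vv(y)=\sum_{j=1}^{m+1} a_j f_j$ in a Whitney-adapted frame $\{f_j = e_j + c_j(y)\}$ and imposing the two control conditions plus unit speed, it reduces existence and boundedness of the correction to solving a linear system $(I+M)\mathbf{a}=(1,0,\ldots,0)^T$ with $M\to 0$ as $y\to X$, so that $(I+M)^{-1}$ stays uniformly bounded near $X$.

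The one place where your proposal goes astray is the final paragraph. You observe (correctly) that Whitney's condition B is pointwise and that a uniform bound is what is really needed, and then propose to obtain uniformity ``by working on a compact exhaustion of $\htt^{-1}[a,b]$ inside each stratum's closure and using finiteness of the stratification.'' This cannot work as stated: $\htt^{-1}[a,b]$ is \emph{not} compact, and neither are the stratum closures intersected with it --- non-properness of $\htt$ is precisely the phenomenon this whole paper is about, and a compact exhaustion of a non-compact set gives no uniform constant. The uniformity must come from the hypothesis that there are no SPAI in the slab, which is exactly what Lemma~\ref{lem:1} converts into the single global $\delta$; the paper then uses this $\delta$ twice --- once for the distortion of the coordinatization (bounded by $2/\delta$) and once implicitly in the fact that the controlled frame exists on all of $\str_{[a,b]}$ --- to get the bound without invoking any compactness of the domain. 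Replace the compact-exhaustion argument with the explicit linear system and the $2/\delta$ distortion bound and your proof closes.
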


Before beginning the proof we motivate with a shorter argument 	
that does not quite hold water.  By Lemma~\ref{lem:1},
the negative unit gradient vector field $\vv_\str$ on each stratum
has magnitude at least $\delta$.  For each point $x$ in a stratum $\str$, 
take a neighborhood $\nbd_x$ in $\C_*^d$ intersecting no lower dimensional
strata and for which $\vv_\str$ extends continuously to a vector field
$\vv_x$ for which $\langle d\htt , \vv_x \rangle \geq \delta / 2$.
Piece these together with a partition of unity.  By convexity the
resulting vector field $\vv$ has norm at most~1 everywhere.  By linearity
$\langle d\htt , \vv_x \rangle \geq \delta / 2$ everywhere.
This is the natural argument.  The gap is that the local product 
structure does not, on the surface, guarantee a bounded continuous
extension of $\vv_\str$.  This must be argued; however as mentioned
above, it follows from Whitney's conditions and is implicit in Mather's
arguments.

\begin{proof}
By Lemma~\ref{lem:1}, the tangential differential $d\htt_\Sigma$ 
is globally bounded from below by some positive quantity $\delta$.
Hence, the hypotheses of~\cite[Proposition~9.1]{mather12} are satisfied
with $V = \logspace$ and the given Whitney stratification, $P = \R$,  
$f = h$ and $\zeta$ the negative unit vector field on $\R$.
Following the proof of~\cite[Proposition~9.1]{mather12}, which 
already yields conclusions~$(i)$ and~$(ii)$ of the lemma, the only place
further argumentation is needed for boundedness and continuity is at the 
bottom of page~494.  There, $\vv_\str$ is constructed inductively given
$\vv_{\str'}$ on all strata of lower dimension.  The word ``clearly''
in the third line from the bottom of page~494 hides some linear
algebra which we now make explicit.

We assume that $X$ and $Y$ are strata, with $X \subseteq \partial Y$
and $\dim X = m < \ell = \dim Y$.
First we straighten $X$ near a point $x \in X$.  In a neighborhood $\nbd$
of $x$ in the ambient space $\R^{2d}$ there is a smooth coordinatization
such that $\R^m \times \bzero$ maps to $X$.  Applying another linear
change of coordinates if necessary, we can assume that $\vv_X (x) = e_1$.
This requires a distortion of magnitudes of tangent vectors by
$1 / |d\htt_X|$ at the point $x$; taking $\nbd$ small enough, we can
assume that the distortion on the tangent bundle over $\nbd$ is 
bounded by twice this, hence globally by at most $2 / \dd$.

Whitney's Condition A stipulates that as $y \in Y$ converges to 
a point $x \in X$, any limit $\ell$-plane of a sequence $T_{y} (Y)$ 
must contain $T_x X = \R^m \times {\mathbf 0}$.  This implies for 
$1 \leq j \leq m$ that the distance of $e_j$ to $T_{y} (Y)$ goes to 
zero (recall we have identified the tangent spaces $T_y (Y)$ for
different $y$).  Hence, for $1 \leq k \leq m$ there are vectors 
$c_k (y) \in (\R^m)^\perp$ going to zero as $y \to x$,
such that $e_k + c_k (y) \in T_y (Y)$.  These vectors $c_k (y)$ 
may be chosen as smooth functions of $y \in Y$, continuously as 
$y \to x \in X$; this follows because the tangent planes $T_y (Y)$ 
vary continuously with $y \in Y$ and semicontinuously as $y \to x \in X$, 
meaning that $T_x (X)$ is contained in the liminf of $T_{y_n} (Y)$.
The vectors $f_k := e_k + c_k (y)$ are linearly 
independent for $1 \leq k \leq m$ because their projections to the
first $m$ coordinates are linearly independent; hence they may 
be completed to a basis $\{ f_1 , \ldots , f_\ell \}$ of $T_y (Y)$.

Let $\{ y_n \}$ be a sequence of points in $Y$ converging to $x \in X$.  
Write $y_n$ as $(x_n,z_n)$ with $x \in \R^m$ in local coordinates and 
$z \in \R^{2d - m}$.  Apply Whitney's Condition B applied to the 
sequences $\{ y_n \}$ in $Y$ and $\{ x_n \}$ in $X$.  Passing to a
subsequence in which $T_{y_n} (Y) \to \tau$ and $z_n / |z_n| \to u$,
Whitney's Condition B asserts that $u \in T_x (Y)$.  Because $T_x (Y)
\subseteq \tau$, it follows that the distance between 
$z_n / |z_n|$ and $T_{y_n} (Y)$ goes to zero.  Hence there is a
sequence $c_{m+1} (y_n) \to \bzero$ in $(\R^m)^\perp$ as $n \to \infty$
such that $z_n / |z_n| + c_{m+1} (y_n) \in T_{y_n} (Y)$ for all $n$.
Because $c_{m+1} \in (\R^m)^\perp$, it follows that we may choose
the basis $\{ 1_j : 1 \leq j \leq \ell \}$ so that $f_{m+1} (y)
= z(y) / |z(y)| + c_{m+1} (y)$.

Now we have what we need to construct a controlled vector field on 
$Y$ that is controlled and close to $e_1$.  In fact we can construct
one that is spanned by $f_1 , \ldots , f_{m+1}$ using
linear algebra.  Write
\begin{equation} \label{eq:lin alg}
\vv (y) = \sum_{j=1}^{m+1} a_j (x(y),z(y)) f_j (y) \, .
\end{equation}

Guessing at the solution, we impose the conditions 
$\langle \vv , e_j \rangle = \delta_{1,j}$ for $1 \leq j \leq m$.
This implies the second control condition, namely that $\vv$ is
preserved by projection from $Y$ to $X$.  The first control condition,
preservation of distance along the vector field, means that
$\langle \vv , z/|z| \rangle = 0$ in local coordinates.  
Thus we arrive at the following system.
\begin{eqnarray*}
\langle \vv , e_1 \rangle = \sum_{j=1}^{m+1} a_j \langle f_j , e_1 \rangle = 
   a_1 + a_{m+1} \langle c_{m+1} , e_1 \rangle & = & 1 \\
\langle \vv , e_2 \rangle = \sum_{j=1}^{m+1} a_j \langle f_j , e_2 \rangle = 
   a_2 + a_{m+1} \langle c_{m+1} , e_2 \rangle & = & 0 \\
& \vdots & \\
\langle \vv , e_m \rangle = \sum_{j=1}^{m+1} a_j \langle f_j , e_m \rangle = 
   a_m + a_{m+1} \langle c_{m+1} , e_m \rangle & = & 0 \\
\langle \vv , \frac{z}{|z|} \rangle 
   = \sum_{j=1}^{m+1} a_j \langle f_j , \frac{z}{|z|} \rangle 
   = a_{m+1} + \sum_{j=1}^m a_j \langle c_j , \frac{z}{|z|} \rangle & = & 0
\end{eqnarray*}
We may write this as $(I + M) {\bf a} = (1,0,\ldots ,0)^T$ where $I$ is
the $(m+1) \times (m+1)$ identity matrix and $M \to 0$ smoothly as 
$y \to x$.  Therefore, in a neighborhood of $x$ in $Y$, the solution 
exists uniquely and smoothly and converges to $e_1$ as $y \to X$.
This allows us to extend $\vv$ from $X$ to a controlled vector field 
in a neighborhood of $X$ in $Y$, varying smoothly on $Y \setminus X$
and extending continuously to $X$, with the properties that
$\langle d\htt_Y , \vv \rangle \equiv -1$ and that the norm on $Y$ is 
at most a bounded multiple of the norm on $X$.  

The solution of the control conditions together with the condition 
$\langle d\htt_Y , \vv \rangle \equiv -1$ are a convex set; a 
partition of unity argument as in~\cite{mather12} preserves 
continuity, and, by the triangle inequality, global boundedness 
of~$\vv$.
\qed
\end{proof}

We are now ready to complete the proof of Theorem~\ref{th:main}.

\begin{proof}[of conclusion~$(i)$ of Theorem~\ref{th:main}]
Choose a vector field $\vv$ as in the conclusion of Lemma~\ref{lem:2}.
Let $\D$ be the set $\logspace \cap \htt^{-1} [a,b]$, inheriting
stratification from the pair $(\logspace , \tsing)$.  On each
stratum of $\D$ the vector field $\vv$ is smooth and bounded.
Let $\D'$ be the space-time domain $\{ (x,t) \in \D \times \R^+ : 
t \leq \htt (x) - a \}$.  Let $\flow : \D' \to \D$ be a solution 
to the differential equation 
$$\frac{d}{dt} \flow (x,t) = \vv \left ( \Psi (x,t) \right ) \, ; 
   \qquad \flow (x,0) = x \, .$$
Such a flow exists on each stratum because because $\vv$ is smooth 
and bounded, and the phase exit the interval $[a,b]$ in finite time 
$\tilde{h}(x) - a \leq b-a$; therefore each trajectory stays within 
a uniformly bounded vicinity of its starting point.  Hence, by 
compactifying the support of the vector field if necessary, the flow 
exists for the time necessary for the height to drop below $a$ from 
any point of the stratum, and is smooth, hence is well-defined everywhere 
on the stratum.
As in~\cite[Section~10]{mather12}, the local
one-parameter group of time-$t$ maps are all injective.  Because the
flow preserves the squared distance functions to strata, the flow
started on a stratum $X$ cannot reach a stratum $Y \subseteq \partial X$;
hence the time-$t$ maps preserve strata.
Continuity of the vector field implies continuity of the time-$t$ maps.

Extend the flow to $\D \times [0,b-a]$ by setting $\flow (x,t) = 
\flow(x,\htt(x)-a)$ for $t > \htt (x) - a$.  By construction the
flow is tangent to strata, hence the flow is stratum preserving.
The flow is continuous because the velocity is bounded and the
stopping time $\htt (x) - a$ is continuous.  For any stratum $\str$,
the flow defines a homotopy within $\str_{\leq b}$ whose final cross 
section is in $\str_{\leq a}$.
\qed  
\end{proof}

\begin{proof}[of conclusion~$(ii)$ of Theorem~\ref{th:main}]
Still in the logspace, 
for any $r \geq 0$, let $\nbd_r$ denote the union of closed $r$-balls 
about the affine stationary points; in particular $\nbd_0$ is the set
of stationary points.  
Due to the presence of affine stationary points, we may no longer invoke 
the conclusions of Lemma~\ref{lem:2}.  We claim, however, that the
conclusions follow if $(iii)$ is replace by $(iii')$: $\vv$ is
bounded and continuous outside any neighborhood $\nbd_r$ of the 
affine stationary points.  This follows from the original proof because 
the distortions are bounded by constant multiples of the quantities 
$1 / |d\htt_X|$, while by Lemma~\ref{lem:1} $|d\htt_X|$ is uniformly 
bounded away from zero outside any neighborhood of the affine stationary 
points.  
Fix the vector field $\vv = \vv_r$ satisfying conclusions 
$(i)$ and $(ii)$ of Lemma~\ref{lem:2} and $(iii')$ above 
and let $K = K(r)$ be an upper bound for $|\vv (r)|$
outside of $\nbd_{r/2}$.  

We build a deformation in two steps.  Fix $r > 0$ and set 
$\ee = r / K(r)$.  Next, apply Lemma~\ref{lem:2} with 
$[c + \ee , b]$ in place of $[a,b]$, which we can do because there 
are no stationary points with heights in $[c+\ee , b]$.  This produces a 
deformation retract of $h^{-1} (-\infty , b]$ to $h^{-1} (-\infty , c + \ee]$.  

Now we compose this with a slowed down version of $\vv$.
The hypotheses of no affine or infinite critical values in $[a,b]$
other than at $c$ imply the same hypotheses hold over a slightly
large interval $[a-\theta , b ]$.  Let $\chi : [0,\infty) \to [0,1]$
be a smooth nondecreasing function equal to~1 on $[1,\infty)$ and~0 
on $[0,1/2)$.  Define the vector field $\ww = \ww_r$ on 
$h^{-1}[a-\theta , b]$ by 
$$\ww (x) := \chi \left ( ||x^{\phantom{.}} , \nbd_0 ||_{\phantom{.}} 
   \right )  \, \cdot \,  
   \chi \left ( \frac{ h(x) - (a-\theta)}{\theta} \right ) \, 
   \, \cdot \, \vv (x) \, ,$$ 
where $||\cdot , \cdot||$
denotes distance.  Because $\vv$ defines a flow, so does $\ww$, 
the trajectories of which are precisely the trajectories of $\vv$ 
slowed down inside $\nbd_r$ and below height $a$.  
Note that the trajectories of $\ww$ stop completely 
inside $\nbd_{r/2}$ and below height $a - \theta$, however, trajectories 
not in these regions slow down so as never to reach $\nbd_{r/2}$ 
nor height $a - \theta$.

Run the flow defined by $\vv$ at time $2 \ee$.  Let $\str$ be any stratum 
and let $x$ be any point in $\str_{\leq c+\ee}$.  
Let $\tau_x : [0,\ee] \to \str$ be the trajectory 
started from $x$ (the trajectory remains in $\str$ because $\vv$ is a
controlled stratified vector field).  We claim that the time-$2\ee$ 
map takes $x$ to a point in $\str_{< c} \cup \nbd_{3r}$.  Because $r$
is arbitrary, this is enough to prove theorem.  The flow 
decreases height, so we may assume without loss of generality that 
$x \in h^{-1} [c,c+\ee]$.  If the trajectory never enters $\nbd_r$
then height decreases at speed~1, hence $\tau_x (2\ee) \in \str_{\leq c-\ee}$
and the claim follows.  If the $\tau_x (2\ee) \in \nbd_r$ then
trivially the claim is true.  Lastly, suppose the trajectory enters
$\nbd_r$ and leaves again.  Let $s \in (0,2\ee)$ be the last time
that $\tau_x (s) \in \nbd_r$.  Because $|\vv| \leq K$, we have
$||\tau(2\ee) , \nbd_0|| \leq r + 2 \ee K \leq 3r$, that is,
$\tau (2\ee) \in \nbd_{3r}$, finishing the proof of the claim.
\qed
\end{proof}

\section{acknowledgements}
The authors would like to thank Paul G{\"o}rlach for his advice 
on computational methods for determining stationary points at infinity,
to Justin Hilburn for ideas on the
proof of the compactification result and to Roberta Guadagni for
related conversations. The authors also thank anonymous referees
for pointing out related results from the literature and helping
to clarify and simplify our arguments.

\appendix
\section{Appendix} \label{sec:A1}

We now give an abstract answer to~\cite[Conjecture~2.11]{pemantle-mvGF-AMS},
in a manner suggested to us by Justin Hilburn and Roberta Guadagni.
Let $H(\zz) := \zz^\mm$ be the monomial function on $C_*^d$, 
and $G\subset C_*^d \times \C^*$, its graph. An easy case of 
toric resolution of singularities (see, e.g., 
\cite{khovanskii_newton_1978}) implies the following result.

\begin{theorem}
There exists a compact toric manifold $K$ such that 
$C_*^d$ embeds into it as an open dense stratum and the 
function $H$ extends from this stratum to a smooth 
$\mathbb{P}^1$-valued function on $K$.
\end{theorem}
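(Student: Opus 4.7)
The plan is to realize $K$ as a smooth projective toric variety whose fan has been tailored to the monomial $H(\zz)=\zz^\mm$. Recall that the theory of toric varieties (see, e.g., Fulton's book on toric varieties) identifies smooth compact toric manifolds containing $\mathbb{C}_*^d$ as open dense torus orbit with complete regular fans $\Sigma$ in $N_\R=\R^d$. The Laurent monomial $H$ corresponds to the character $\mathbf{m}\in M=\mathrm{Hom}(N,\Z)$. The key observation is that $H$ extends to a regular $\mathbb{P}^1$-valued map on the toric variety $X_\Sigma$ if and only if each cone $\sigma\in\Sigma$ lies entirely inside the closed halfspace $\{\mathbf{m}\geq 0\}$ or the closed halfspace $\{\mathbf{m}\leq 0\}$: on the first sort of cone, $\zz^\mm$ extends to a regular function with values in $\C\subset\mathbb{P}^1$, while on the second sort, $\zz^{-\mm}$ extends to a regular function, giving a chart around $\infty\in\mathbb{P}^1$. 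The two extensions agree on overlaps in $\mathbb{C}_*^d$, so they glue to a global morphism $H:X_\Sigma\to\mathbb{P}^1$.

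My first step would be to produce a complete fan $\Sigma_0$ in $\R^d$ adapted to the hyperplane $L:=\{\mathbf{m}=0\}$, namely a fan whose support is all of $\R^d$ and whose cones each lie in one of the two closed halfspaces cut out by $L$. A convenient starting point is the fan of $(\mathbb{P}^1)^d$, whose maximal cones are the $2^d$ orthants, and then subdivide each orthant by intersecting with $L$; alternatively, one can take any simplicial refinement of the ``hyperplane-and-coordinate-axes'' arrangement. Either way, after intersecting every cone of a chosen initial fan with the half-spaces bounded by $L$, one obtains a complete fan $\Sigma_0$ each of whose cones satisfies the sign condition on $\mathbf{m}$.

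Next I would apply a standard toric resolution of singularities (\cite{khovanskii_newton_1978}, or the classical combinatorial subdivision algorithm) to obtain a complete regular refinement $\Sigma$ of $\Sigma_0$. The only thing to verify is that the sign property survives refinement: but any cone of $\Sigma$ is contained in a cone of $\Sigma_0$, hence remains in one of the two halfspaces bounded by $L$. Setting $K:=X_\Sigma$, we obtain a smooth compact toric manifold containing $\mathbb{C}_*^d$ as the open dense torus-orbit stratum.

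Finally I would verify that the induced map $H:K\to\mathbb{P}^1$ is smooth. On each affine chart $U_\sigma$ corresponding to a maximal cone $\sigma$ of $\Sigma$, smoothness of $X_\Sigma$ means $U_\sigma\cong\C^d$ with coordinates dual to a basis of $\sigma\cap N$. Writing $\mathbf{m}$ in this dual basis, the restriction of $H$ (or $1/H$) to $U_\sigma$ becomes a monomial in these coordinates with nonnegative exponents, hence is smooth (in fact polynomial) as a map into the corresponding affine chart of $\mathbb{P}^1$. The only mild subtlety, and the most technical step, is organizing the refinement so that smoothness and the sign condition hold simultaneously; but the subdivision procedure is purely combinatorial and neither step obstructs the other, so this is routine once the framework is set up.
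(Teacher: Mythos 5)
Your proof is correct, but it takes a genuinely different route from the paper's. The paper encodes $H$ via its graph $G=\{h=\zz^\mm\}\subset(\C^*)^{d+1}$, applies Khovanskii's toric resolution theorem to the polynomial $h-\zz^\mm$ (the Newton polytope is a segment, so the nondegeneracy hypothesis is trivial), and takes $K=\overline{G}$ inside the resulting compactification, with the second coordinate providing the extension of $H$. You instead work directly with a fan in $N_\R=\R^d$: you observe that a character $\zz^\mm$ extends to a $\mathbb{P}^1$-valued morphism on $X_\Sigma$ precisely when each cone of $\Sigma$ lies in one of the two closed halfspaces bounded by $\ker\mm$, build a complete fan with this property by refining along that hyperplane, and then pass to a smooth refinement (which preserves the halfspace condition since refinement only shrinks cones). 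Both approaches are sound. Yours is more elementary and constructive: it gives an explicit recipe for the fan, and it makes transparent \emph{why} $H$ extends (the fan separates the locus where $H\to 0$ from where $H\to\infty$), at the cost of exploiting that $H$ is a single monomial character. The paper's approach is shorter because it invokes a black-box theorem, and its template would generalize more directly if $H$ were replaced by a polynomial whose graph has a nondegenerate Newton polytope. One small expository caveat in your writeup: ``subdivide each orthant by intersecting with $L$'' should be phrased as taking the common refinement of the orthant fan with the two halfspaces bounded by $L$, and one should note that the resulting cones remain strongly convex because they are intersections of pointed cones with halfspaces; this is standard but worth saying, since $L$ itself does not generate a fan of strongly convex cones.
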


\begin{proof}
The graph $G$ is the zero set of the polynomial $P_\mm:=h-\zz^\mm$ on $C_*^d\times\C^*$, where $h$ is the coordinate on the second factor. Theorem 2 in~\cite{khovanskii_newton_1978} implies that a compactification of $C_*^d\times\C^*$ in which the closure of $G$ is smooth exists if the restrictions of the polynomial $P_\mm$ to any facet of the Newton polyhedron of $P_\mm$ is nondegenerate (defines a nonsingular manifold in the corresponding subtorus). In our case, the Newton polytope is a segment, connecting the points $(\mm,0)$ and $(\zero,1)$, and this condition follows immediately. Hence, the closure of $G$ in the compactification of $C_*^d\times\C$ is a compact manifold $K$. 
We notice that the projection to $C_*^d$ is an isomorphism on $G$, and therefore $K$ compactifies $C_*^d$ in such a way that $H$ lifts to a smooth function on $K$.

Lifting the variety $\sing_*$ to $G\subset K$ and taking the closure produces the desired result: a compactification of $\sing_*$ in a compact manifold $K$ on which $H$ is smooth. 
\qed
\end{proof}

A practical realization of the embedding requires construction of 
a simple fan (partition of $\Real^{d+1}$ into simplicial cones with 
unimodular generators) which subdivides the fan dual to the Newton polytope 
of $h-\zz^\mm$.  While this is algorithmically doable (and implementations 
exist, for example in {\tt macaulay2}), the resulting fans depend strongly 
on $\mm$, and the resulting compactifications $K$ are hard to work with.

\begin{definition}[{\bf compactified stationary point}]
Define a compactified stationary point of $H$, with respect to 
a compactification of $C_*^d$ to which $H$ extends smoothly, 
as a point $\xx$ in the closure of $\sing$ such that $dH$ vanishes 
at $\xx$ on the stratum $\Sb(\xx)$, and $H(\xx)$ is not zero or infinite.
\end{definition}

Applying basic results of stratified Morse theory~\cite{GM} to $\compact$
directly yields the following consequence.

\begin{corollary}[no compactified stationary point implies Morse results]
\label{cor:abstract}
~~\\[2ex] $(i)$ If there are no stationary points or compactified stationary points
with heights in $[a,b]$, then $\sing_{\leq b}$ is homotopy equivalent 
to $\sing_{\leq a}$ via the downward gradient flow.

$(ii)$ If there is a single stationary point $x$ with critical value
in $[a,b]$, and there is no compactified stationary point with height
in $[a,b]$, then the homotopy type of the pair $\left(\M_{\leq b} \, , \, 
\M_{\leq b} \right)$ is determined by a neighborhood of $x$, with an
explicit description following from results in~\cite{GM}.
\end{corollary}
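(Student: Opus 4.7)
The plan is to apply Theorems~A and~B of stratified Morse theory~\cite{GM} directly on the compactification $K$ furnished by the preceding theorem, exploiting that $K$ is compact and that $H$ extends smoothly to a $\mathbb{P}^1$-valued function. The key preliminary step is to equip $K$ with a Whitney stratification that refines the partition induced by (a)~the ambient Whitney stratification of $(\C_*^d, \sing_*)$, (b)~the boundary divisor $D := K \setminus \C_*^d$, and (c)~the two fibers $H^{-1}(0)$ and $H^{-1}(\infty)$. Because all three pieces are algebraic, such a common refinement exists by Teissier's canonical stratification theorem (or algorithmically via Mostowski-Rannou, as discussed in Section~\ref{sec:computing}). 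On the open set $U := K \setminus (H^{-1}(0) \cup H^{-1}(\infty))$ the real height $h(\zz) = -\log|H(\zz)|$ is a smooth, proper, stratum-wise smooth function into $\R$.

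Next I would match the stratified critical points of $h$ on $U$ with the two kinds of stationary points appearing in the hypothesis. On strata contained in $\C_*^d$, the stratified critical points of $h$ are exactly the affine stationary points in the sense of the paper, because $h$ coincides with $\htt$ in those coordinates. On strata contained in the added boundary $D \cap U$ (or on lifted coordinate hyperplanes, if these are present as added strata), the stratified critical points of $h$ are exactly the compactified stationary points, by the defining requirement that $dH$ vanish along the ambient stratum $\Sb(\xx)$ while $H(\xx) \notin \{0,\infty\}$. The hypothesis of part~(i), combined with properness of $h$ on $U$, therefore asserts that $h$ has no stratified critical value in $[a,b]$ at all.

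Part~(i) now follows from Thom's first isotopy lemma applied to the proper stratified map $h$ on $h^{-1}[a,b] \cap U$: the downward gradient-like flow integrates to a stratum-preserving homeomorphism between $h^{-1}(-\infty,b]$ and $h^{-1}(-\infty,a]$ (the construction is simpler than, and subsumed by, the one carried out in Section~\ref{sec:proof}, since here there are no stationary points at infinity to contend with). This homeomorphism restricts to the claimed homotopy equivalence between $\sing_{\leq b}$ and $\sing_{\leq a}$, as well as between $\M_{\leq b}$ and $\M_{\leq a}$. Part~(ii) follows by applying the same isotopy separately to the sub-strips $h^{-1}[a, h(x)-\varepsilon]$ and $h^{-1}[h(x)+\varepsilon, b]$, which reduces the pair $(\M_{\leq b}, \M_{\leq a})$ to $(\M_{\leq h(x)+\varepsilon}, \M_{\leq h(x)-\varepsilon})$; the Main Theorem of~\cite{GM} then identifies the homotopy type of this local pair with the product of tangential and normal Morse data at $x$. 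The main obstacle is step one of the plan: producing a Whitney stratification of $K$ whose added strata are arranged so that the stratified critical locus of $h$ on those strata matches the definition of compactified stationary point exactly. Once this bookkeeping is in place, the rest of the argument is a formal consequence of~\cite{GM}.
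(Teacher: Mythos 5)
Your proposal is correct and takes the same approach the paper intends: the paper's ``proof'' is simply the sentence before the corollary (``Applying basic results of stratified Morse theory~\cite{GM} to $\compact$ directly yields the following consequence''), and your write-up supplies exactly the bookkeeping that sentence elides. In particular, your observations are all sound: $h^{-1}[a,b]$ is automatically compact in $K$ since it equals $H^{-1}$ of a compact annulus in $\C^*$, hence $h$ is proper on $U$; and on complex-analytic strata the vanishing of $dH$ is equivalent to the vanishing of $dh = -\Re(dH/H)$, so the stratified critical points of $h$ on the added strata coincide with the compactified stationary points as defined. The only thing worth flagging is that your matching step implicitly relies on $\rhat$ being rational (so that $H = \zz^\mm$ is a genuine monomial and $h = -\log|H|$), but the appendix theorem already carries this restriction, so this is not a gap in your argument relative to what the corollary can claim.
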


\bibliographystyle{alpha} 
\bibliography{bibl}

\newcommand{\etalchar}[1]{$^{#1}$}
\def\cdprime{$''$}
\begin{thebibliography}{DvdHP12}

\bibitem[ABG70]{ABG}
M.~Atiyah, R.~Bott, and L.~G{\aa}rding.
\newblock Lacunas for hyperbolic differential operators with constant
  coefficients, {I}.
\newblock {\em Acta Mathematica}, 124:109--189, 1970.

\bibitem[ABN{\etalchar{+}}01]{AmbainisBachNayakVishwanathWatrous2001}
Andris Ambainis, Eric Bach, Ashwin Nayak, Ashvin Vishwanath, and John Watrous.
\newblock One-dimensional quantum walks.
\newblock In {\em Proceedings of the {T}hirty-{T}hird {A}nnual {ACM}
  {S}ymposium on {T}heory of {C}omputing}, pages 37--49. ACM, New York, 2001.

\bibitem[ADZ93]{AharonovDavidovichZagury1993}
Y.~Aharonov, L.~Davidovich, and N.~Zagury.
\newblock Quantum random walks.
\newblock {\em Phys. Rev. A}, 48:1687--1690, Aug 1993.

\bibitem[BBBP11]{BaryshnikovBradyBresslerPemantle2011}
Yuliy Baryshnikov, Wil Brady, Andrew Bressler, and Robin Pemantle.
\newblock Two-dimensional quantum random walk.
\newblock {\em J. Stat. Phys.}, 142(1):78--107, 2011.

\bibitem[BDS17]{BostanDumontSalvy2017}
Alin Bostan, Louis Dumont, and Bruno Salvy.
\newblock Algebraic diagonals and walks: algorithms, bounds, complexity.
\newblock {\em J. Symbolic Comput.}, 83:68--92, 2017.

\bibitem[BGPP10]{BresslerGreenwoodPemantlePetkovsek2010}
Andrew Bressler, Torin Greenwood, Robin Pemantle, and Marko Petkov{\v{s}}ek.
\newblock Quantum random walk on the integer lattice: examples and phenomena.
\newblock In {\em Algorithmic probability and combinatorics}, volume 520 of
  {\em Contemp. Math.}, pages 41--60. Amer. Math. Soc., Providence, RI, 2010.

\bibitem[BM93]{BertozziMcKenna1993}
Andrea Bertozzi and James McKenna.
\newblock Multidimensional residues, generating functions, and their
  application to queueing networks.
\newblock {\em SIAM Rev.}, 35(2):239--268, 1993.

\bibitem[BMP19]{BMP-lacuna}
Y.~Baryshnikov, S.~Melczer, and R.~Pemantle.
\newblock Asymptotics of multivariate sequences in the presence of a lacuna.
\newblock {\em Preprint, 24 pages}, 2019.

\bibitem[BMPS18]{BMPS}
Y.~Baryshnikov, S.~Melczer, R.~Pemantle, and A.~Straub.
\newblock Diagonal asymptotics for symmetric rational functions via {ACSV}.
\newblock In {\em 29th International Conference on Probabilistic, Combinatorial
  and Asymptotic Methods for the Analysis of Algorithms (AofA 2018)}, volume
  110, page~12. Dagstuhl, 2018.

\bibitem[BP07]{BresslerPemantle2007}
A.~Bressler and R.~Pemantle.
\newblock Quantum random walks in one dimension via generating functions.
\newblock In {\em Proceedings of the 2007 Conference on the Analysis of
  Algorithms}, volume AofA 07, page~11. LORIA, Nancy, France, 2007.

\bibitem[BP11]{BP-cones}
Y.~Baryshnikov and R.~Pemantle.
\newblock Asymptotics of multivariate sequences, part {III}: quadratic points.
\newblock {\em Adv. Math.}, 228:3127--3206, 2011.

\bibitem[Bro88]{Broughton1988}
S.~A. Broughton.
\newblock Milnor numbers and the topology of polynomial hypersurfaces.
\newblock {\em Invent. Math.}, 92:217--241, 1988.

\bibitem[CIR03]{CarteretIsmailRichmond2003}
Hilary~A. Carteret, Mourad E.~H. Ismail, and Bruce Richmond.
\newblock Three routes to the exact asymptotics for the one-dimensional quantum
  walk.
\newblock {\em J. Phys. A}, 36(33):8775--8795, 2003.

\bibitem[CLO92]{CLO1}
D.~Cox, J.~Little, and D.~O'Shea.
\newblock {\em Ideals, Varieties and Algorithms}.
\newblock Undergraduate Texts in Mathematics. Springer-Verlag, Berlin, second
  edition, 1992.

\bibitem[De{V}11]{devries-thesis}
T.~De{V}ries.
\newblock {\em Algorithms for bivariate singularity analysis}.
\newblock PhD thesis, University of Pennsylvania, 2011.

\bibitem[DJ21]{dinh-jelonek2021}
S.~Dinh and Z.~Jelonek.
\newblock Thom isotopy theorem for nonproper maps and computation of sets of
  stratified generalized critical values.
\newblock {\em Discrete Comput. Geom.}, 65(1):279--304, 2021.

\bibitem[DvdHP12]{DVvdHP2012}
T.~De{V}ries, J.~van~der Hoeven, and R.~Pemantle.
\newblock Effective asymptotics for smooth bivariate generating functions.
\newblock {\em Online J. Anal. Comb.}, 7:24, 2012.

\bibitem[Fed77]{fedoryuk-book}
M.~Fedoryuk.
\newblock {\em Saddle point method (in Russian)}.
\newblock Nauka, Moscow, 1977.

\bibitem[FIM99]{FayolleIasnogorodskiMalyshev1999}
Guy Fayolle, Roudolf Iasnogorodski, and Vadim Malyshev.
\newblock {\em Random walks in the quarter-plane}, volume~40 of {\em
  Applications of Mathematics (New York)}.
\newblock Springer-Verlag, Berlin, 1999.

\bibitem[FPT00]{forsberg-passare-tsikh}
M.~Forsberg, M.~Passare, and A.~Tsikh.
\newblock Laurent determinants and arrangements of hyperplane amoebas.
\newblock {\em Advances in Mathematics}, 151:45--70, 2000.

\bibitem[GM88]{GM}
M.~Goresky and R.~MacPherson.
\newblock {\em Stratified {M}orse {T}heory}.
\newblock Ergebnisse der Mathematik und ihrer Grenzgebiete. Springer-Verlag,
  Berlin, 1988.

\bibitem[Gor75]{gordon-residues}
G.~Gordon.
\newblock The residue calculus in several complex variables.
\newblock {\em Trans. AMS}, 213:127--176, 1975.

\bibitem[Gre18]{Greenwood2018}
Torin Greenwood.
\newblock Asymptotics of bivariate analytic functions with algebraic
  singularities.
\newblock {\em J. Combin. Theory Ser. A}, 153:1--30, 2018.

\bibitem[Gri69]{griffiths1969}
P.~Griffiths.
\newblock On the periods of certain rational integrals, {I}.
\newblock {\em Ann. Math.}, 90(3):460--495, 1969.

\bibitem[GRZ83]{laguerre}
J.~Gillis, B.~Reznick, and D.~Zeilberger.
\newblock On elementary methods in positivity theory.
\newblock {\em SIAM J. Math. Anal.}, 14:396--398, 1983.

\bibitem[Har75]{Hardt75}
R.~Hardt.
\newblock Stratification of real analytic mappings and images.
\newblock {\em Invent. Math.}, 28:193--208, 1975.

\bibitem[HK71]{hautus-klarner-diagonal}
M.~Hautus and D.~Klarner.
\newblock The diagonal of a double power series.
\newblock {\em Duke Math. J.}, 23:613--628, 1971.

\bibitem[KH99]{KnudsenHein1999}
B~Knudsen and J~Hein.
\newblock {RNA secondary structure prediction using stochastic context-free
  grammars and evolutionary history.}
\newblock {\em Bioinformatics}, 15(6):446--454, 06 1999.

\bibitem[Kho78]{khovanskii_newton_1978}
A.~G. Khovanskii.
\newblock Newton polyhedra and toroidal varieties.
\newblock {\em Functional Analysis and Its Applications}, 11(4):289--296, 1978.

\bibitem[KOS00]{kurdyka-orro-simon2000}
K.~Kurdyka, P.~Orro, and S.~Simon.
\newblock Semialgebraic {S}ard theorem for generalized critical values.
\newblock {\em J. Differential Geom.}, 56(1):67--92, 2000.

\bibitem[Lan02]{lang-manifolds}
S.~Lang.
\newblock {\em Introduction to Differentiable Manifolds}.
\newblock Springer, New York, 1962, 2002.

\bibitem[Ler50]{leray}
J.~Leray.
\newblock Le calcul diff\'erentiel et int\'egral sur un vari\'et\'e analytique
  complexe.
\newblock {\em Bull. Soc. Math. France}, 87:81--180, 1950.

\bibitem[Mal74]{malgrange74}
B.~Malgrange.
\newblock Int{\'e}grales asymptotiques et monodromie.
\newblock {\em Ann. Sci. ENS}, 7:405--430, 1974.

\bibitem[Mal80]{malgrange80}
B.~Malgrange.
\newblock M{\'e}thode de la phase staionnaire et sommation de borel.
\newblock In {\em Complex Analysis, Microlocal Calculus and Relativistic
  Quantum Theory}, volume 126. Springer, Berlin, 1980.

\bibitem[Mat12]{mather12}
J.~Mather.
\newblock Notes on topological stability.
\newblock {\em Bull. AMS}, 49(4):475--506, 2012.

\bibitem[Mel21]{Melczer2021}
Stephen Melczer.
\newblock {\em An Invitation to Analytic Combinatorics: From One to Several
  Variables}.
\newblock Texts \& Monographs in Symbolic Computation. Springer International
  Publishing, 2021.

\bibitem[MR91]{mostowski-rannou}
T.~Mostowski and E.~Rannou.
\newblock Complexity of the computation of the canonical {W}hitney
  stratification of an algebraic set in {C}n.
\newblock {\em Lecture Notes in Computer Science}, 539:281--291, 1991.

\bibitem[MS74]{milnor-stasheff}
J.~Milnor and J.~Stasheff.
\newblock {\em Characteristic Classes}, volume~76 of {\em Annals of Mathematics
  Studies}.
\newblock Princeton University Press, Princeton, 1974.

\bibitem[Mum76]{mumford}
D.~Mumford.
\newblock {\em Algebraic Geometry I. Complex Algebraic Varieties}.
\newblock Classics in Mathematics. Springer-Verlag, Berlin, 1995, reprinted
  from 1976.

\bibitem[Par95]{Parusinski1995}
A.~Parusi{\'n}ski.
\newblock On the bifurcation set of complex polynomial with isolated
  singularties at infinity.
\newblock {\em Comp. Math.}, 97(3):369--384, 1995.

\bibitem[Pem10]{pemantle-mvGF-AMS}
R.~Pemantle.
\newblock Analytic combinatorics in several variables: an overview.
\newblock In {\em Algorithmic Probability and Combinatorics}, volume 520, pages
  195--220. American Mathematical Society, 2010.

\bibitem[PH14]{PoznanovicHeitsch2014}
Svetlana Poznanovi\'{c} and Christine~E. Heitsch.
\newblock Asymptotic distribution of motifs in a stochastic context-free
  grammar model of {RNA} folding.
\newblock {\em J. Math. Biol.}, 69(6-7):1743--1772, 2014.

\bibitem[Pha83]{Pham1983}
Fr\'{e}d\'{e}ric Pham.
\newblock Vanishing homologies and the {$n$} variable saddlepoint method.
\newblock In {\em Singularities, {P}art 2 ({A}rcata, {C}alif., 1981)},
  volume~40 of {\em Proc. Sympos. Pure Math.}, pages 319--333. Amer. Math.
  Soc., Providence, RI, 1983.

\bibitem[PW04]{PW2}
R.~Pemantle and M.C. Wilson.
\newblock Asymptotics of multivariate sequences, {II}. {M}ultiple points of the
  singular variety.
\newblock {\em Combin. Probab. Comput.}, 13:735--761, 2004.

\bibitem[PW08]{PW9}
R.~Pemantle and M.C. Wilson.
\newblock Twenty combinatorial examples of asymptotics derived from
  multivariate generating functions.
\newblock {\em SIAM Review}, 50:199--272, 2008.

\bibitem[PW13]{PW-book}
R.~Pemantle and M.~Wilson.
\newblock {\em Analytic Combinatorics in Several Variables}, volume 340 of {\em
  Cambridge Studies in Advanced Mathematics}.
\newblock Cambridge University Press, New York, 2013.

\bibitem[Rab97]{rabier1997}
Patrick~J. Rabier.
\newblock Ehresmann fibrations and {P}alais-{S}male conditions for morphisms of
  {F}insler manifolds.
\newblock {\em Ann. of Math. (2)}, 146(3):647--691, 1997.

\bibitem[ST95]{siersma-tibar1995}
D.~Siersma and M.~Tibar.
\newblock Singularities at infinity and their vanishing cycles.
\newblock {\em Duke Math. J.}, 80(3):771--783, 1995.

\bibitem[Tei82]{Teissier1982}
Bernard Teissier.
\newblock Vari{\'e}t{\'e}s polaires. {II}. {M}ultiplicit{\'e}s polaires,
  sections planes, et conditions de {W}hitney.
\newblock In {\em Algebraic geometry ({L}a {R}{\'a}bida, 1981)}, volume 961 of
  {\em Lecture Notes in Math.}, pages 314--491. Springer, Berlin, 1982.

\bibitem[Var77]{varchenko76}
A.~N. Varchenko.
\newblock Newton polyhedra and estimation of oscillating integrals.
\newblock {\em Functional Anal. Appl.}, 10:175--196, 1977.

\bibitem[Vas77]{vassiliev77}
V.~Vassiliev.
\newblock Asymptotic exponential integrals, {N}ewton's diagram, and the
  classification of minimal points.
\newblock {\em Functional Anal. Appl.}, 11:163--172, 1977.

\end{thebibliography}

\end{document}